\documentclass[11pt, reqno]{amsart}%
\usepackage{amsmath,amsfonts,amssymb,amsthm}
\usepackage{epsfig}
\usepackage{tikz}

\usepackage{bbm}
\usepackage{enumerate}
\usepackage{amstext}
\usepackage{mathrsfs}
\usepackage{xspace}
\usepackage{amsfonts}
\usepackage{amsmath}
\usepackage{amssymb}
\usepackage{amstext}
\usepackage{amsthm}       
\usepackage{xspace}
\usepackage[active]{srcltx}
\usepackage{cite}

\renewcommand{\S}{\mathcal S}

\newcommand{\CC}{\D{C}}

\newcommand{\R}{\mathbb R}

\newcommand{\Lip}{\D{Lip}}
\newcommand{\N}{\mathbb N}

\newcommand{\TT}{\mathbb T}
\newcommand{\Z}{\mathbb Z}

\newcommand{\sol}{\mathfrak{S}}

\newcommand{\eps}{\varepsilon}
\newcommand{\ucv}{\rightrightarrows}

\newcommand{\cmeasures}{\tilde{\mathcal C}}
\newcommand{\MM}{{\R^d}}
\newcommand{\TM}{{\R^d\times\R^d}}
\newcommand{\TTorus}{{\TT^d\times\R^d}}

\newcommand{\omegasetG}{\omega\big(\{u^\lambda_G\}\big)}

\newcommand{\supp}{\operatorname{supp}}

\newcommand{\e}{\textrm{\rm e}}

\newcommand{\Mis}{\mathfrak{M}}

\newcommand{\tagliato}{$\kern-5 mm -$}
\newcommand{\tagliat}{$\kern-4 mm -$}

\newcommand{\D}[1]{\mbox{\rm #1}}

\newcommand{\weakst}{\stackrel{\ast}{\rightharpoonup}}
\newcommand{\weakcv}{\rightharpoonup}

\newtheorem{teorema}{Theorem}[section]
\newtheorem{prop}[teorema]{Proposition}
\newtheorem{lemma}[teorema]{Lemma}
\newtheorem{definition}[teorema]{Definition}
\newtheorem{cor}[teorema]{Corollary}

\newtheorem{guess}[teorema]{Remark}
\newtheorem{example}[teorema]{Example}

\newenvironment{esempio}{\begin{example} \begin{rm}}{\end{rm} \end{example}}

\newenvironment{oss}{\begin{guess} \begin{rm}}{\end{rm} \end{guess}}

\newcommand{\IN}{\text{ in }}

\newcommand{\FORALL}{\text{ for all }}

\usepackage{hyperref}

\usepackage{xcolor}

\begin{document}
\title{Vanishing discount selection for constant-sign local perturbations of periodic Hamiltonians}
\author{Andrea Davini}
\address{Dip. di Matematica, {Sapienza} Universit\`a di Roma,
P.le Aldo Moro 2, 00185 Roma, Italy}
\email{davini@mat.uniroma1.it}
\date{\today}
\keywords{Hamilton--Jacobi equations, vanishing discount, viscosity solutions,
Mather measures, noncompact manifolds, compactly supported perturbations}
\subjclass[2020]{35F21, 35B40, 49L25}

\begin{abstract}
We study the vanishing discount selection problem for critical
Hamil\-ton--\allowbreak Jacobi equations posed on $\R^d$ with Hamiltonian $G=H-V$,
where $H$ is continuous, periodic in space, convex and superlinear
in the momentum, and $V$ is a compactly supported potential of constant sign. 
For $V\leqslant0$, we prove local uniform convergence of the
discounted solutions to a distinguished critical solution,
which we characterize in terms of Mather measures and of the
solution selected by the unperturbed periodic problem.
The proof relies on a decomposition of limiting discounted measures
into a retained component and a periodic component accounting for the 
mass lost at infinity.
For $V\geqslant0$, we establish convergence under the additional
assumption that the unperturbed selected solution has zero average
with respect to every periodic Mather measure. We also give explicit examples in which the discounted solutions
converge, but natural formulas for the candidate limit fail
to identify the selected solution.
\end{abstract}
\maketitle

\section*{Introduction}\label{sec:intro}

We study the vanishing discount selection problem for critical
Hamilton--Jacobi equations on $\R^d$ with Hamiltonian $G=H-V$,
where $H$ is continuous, periodic in space, convex and superlinear
in the momentum, and $V$ is a continuous, compactly supported potential.
For each $\lambda>0$, let $u_G^\lambda$ be the unique bounded
viscosity solution of
\[
 \lambda u_G^\lambda+G(x,Du_G^\lambda)=c(G)
 \qquad\hbox{in $\MM$},
\]
where $c(G)$ is the critical value associated with bounded subsolutions;
see Section~\ref{subsec:critical}.
When $V\equiv0$, $c(G)$ coincides with the standard periodic
critical value $c(H)$, also known as ergodic constant.
The question is whether $u_G^\lambda$ converges as $\lambda\to0^+$
and which solution of $G(x,Du)=c(G)$ in $\MM$ is selected, since the critical
equation generally has many solutions, even up to additive constants.

The one-dimensional version of this problem was studied in
\cite{CDD23}, where convergence and a characterization of the
selected limit were established for compactly supported potentials
of arbitrary sign. The present paper pursues this program in higher
dimensions. A local perturbation destroys the compactness provided by
the reduction to the torus, while leaving the Hamiltonian
unchanged outside a compact set. Selection must therefore account for both
the perturbation and the minimizing dynamics that persist at infinity.

On compact manifolds, convergence under general continuity,
convexity and coercivity assumptions was established in
\cite{DFIZ1}, where the selected solution is characterized through
inequalities against Mather measures. Earlier results under
additional dynamical assumptions were obtained in \cite{IS11}.

The subsequent literature has developed this selection approach
in several directions, including degenerate elliptic equations
\cite{MiTra17,IMTa17}, boundary value problems \cite{IMTb17},
weakly coupled and monotone systems \cite{DZ21,Is21,IsLi20},
and nonlocal equations \cite{DI27}.
Related selection problems also arise in mean field games
\cite{CaPo19}.
Other developments concern nonlinear and degenerate discounting
\cite{CCIsZh19,Chen21,Z20,CFZZ24}, negative or nonmonotone
discounting \cite{DaLi21,DNYZ24}, and certain classes of
nonconvex Hamiltonians \cite{GMT18}.
In many of these settings, suitable notions of minimizing measures
remain central to the identification of the limit.

In the whole space, convergence was established in
\cite[Theorem~1]{BK22} for mechanical Hamiltonians of the form
$\frac12|p|^2-f(x)$, where $f$ is bounded, continuous and attains
its minimum. Neither periodicity of $f$ nor compactness of its set of
global minimizers is required. The convergence proof exploits the eikonal
structure and uniqueness with prescribed values on the
minimizing set.

An earlier convergence result for general convex Hamiltonians
in the whole space was obtained in \cite{IsSic20}
under a condition ensuring tightness of the families of measures
used in the asymptotic analysis; see also \cite{TZ26} for an
extension to nonlinear discounting.
This localization mechanism is closely related to the one at work
when $c(G)>c(H)$ in \cite{CDD23}: a bounded critical subsolution
that is uniformly strict outside a compact set prevents loss of
mass at infinity.
As already discussed in \cite{CDD23}, the two settings have a
nonempty intersection, and the proof for the positive-gap case
in that paper already applies in arbitrary dimension,
\emph{mutatis mutandis}.

The main difficulty arises when $c(G)=c(H)$,
since the measures may then lose part or all of their mass
at infinity.
In \cite{CDD23}, this loss is handled by exploiting the geometry
of one-dimensional minimizing curves.\smallskip

Our main result establishes convergence for every nonpositive
compactly supported potential in any dimension
(Theorem~\ref{thm:main}). More precisely, if $V\leqslant0$, then
\begin{equation}\label{eq:intro-convergence}
 u_G^\lambda\ucv u_G^0\quad\hbox{in $\MM$ as $\lambda\to0^+$},
\end{equation}
where the convergence is locally uniform and $u_G^0$ is a Lipschitz
critical solution bounded from below. The limit is the supremum
of bounded critical subsolutions with nonpositive averages against
all whole-space Mather measures for $G$; when $c(G)=c(H)$, the
candidates must also lie below $u_H^0$, where the latter denotes the solution 
selected by the vanishing
discount approximation for the unperturbed periodic Hamiltonian
$H$; see \cite{DFIZ1}.
The measures and the precise constraints are specified in
Sections~\ref{subsec:closed} and \ref{sec:selection}.
The limit is bounded when the critical values agree and coercive  
when $c(G)>c(H)$; see Proposition~\ref{prop:gap}.

As in \cite{DFIZ1}, we establish the convergence in
\eqref{eq:intro-convergence} by studying the asymptotic behavior
of a family of probability measures, called discounted measures.
More precisely, for each fixed $x\in\MM$ and $\lambda>0$,
we associate with any optimal trajectory for $u_G^\lambda(x)$
an exponentially weighted occupation measure
$\tilde\mu_x^\lambda$ on $\TM$. 
This measure records the time spent by the trajectory in phase
space, and the weight is normalized to give
total mass one.

The challenging step in the noncompact setting considered here 
is to recover the escaping mass through periodicity.
In \cite{CDD23}, this issue is addressed by fixing a sufficiently
large ball centered at the origin and containing both $x$ and
the support of $V$, and splitting $\tilde\mu_x^\lambda$ into
two components corresponding to the weighted time spent inside
and outside the ball.
The outer component is projected onto the torus.
The crucial point is to show that the limiting measures obtained
from these two components as $\lambda\to0^+$ are closed.
After normalization, the nonzero limits are then identified
as Mather measures for $G$ on $\R$ and for the periodic
problem associated with $H$, respectively.

Proving closedness of the limiting measures requires controlling
the boundary terms generated whenever the trajectory enters
or leaves the ball.
In one dimension, the optimal trajectory crosses the ball
at most once.
The underlying reason is topological: roughly speaking, optimality
prescribes the velocity at each point, whereas leaving the ball
and subsequently returning would require crossing the same
boundary point with velocities of opposite signs.

In higher dimensions, an exit and a subsequent return may occur
at different boundary points, so this obstruction disappears. 
A bound on the number of crossings growing more slowly than
$1/\lambda$ would suffice to kill the boundary contributions
in the limit.
Although our intuition suggests that such a bound should hold,
a proof has so far eluded us.

The new idea used here is to separate the retained and escaping
mass after passing to the limit.
We take limits of the full discounted measures on the tangent
bundle of $\R^d$ and of their projections onto the tangent
bundle of $\TT^d$, along a common subsequence.
The first limit records the retained mass, while the second
preserves the total mass.

We then subtract the projection of the first limit onto the
tangent bundle of the torus from the second.
The resulting residual measure is a nonnegative closed measure
on this bundle whose mass is exactly the mass lost at infinity. 
This construction requires no control on the number of
boundary crossings and thus avoids the geometric difficulty
described above.

An action identity analogous to that used in \cite{CDD23}
then shows that, after normalization, the retained component
minimizes the action for $G$, while the residual measure
minimizes the periodic action for $H$.
Each nonzero component therefore yields a Mather probability
measure. 
The same identity excludes loss of mass when $c(G)>c(H)$.
When the critical values agree, the escaping component can
survive and explains why the periodic selected solution enters
the characterization of the limit.
The decomposition itself holds without a sign assumption on $V$;
see Proposition~\ref{prop:decomp}.

For nonnegative potentials, we prove the existence of bounded
critical solutions and uniform bounds for the discounted family.
We also obtain convergence to the least bounded critical solution
above $u_H^0$, provided that $u_H^0$ has zero average against every
periodic Mather measure (Theorem~\ref{thm:positive}).
This assumption is restrictive: it fails even in the
one-dimensional asymmetric double-pendulum example presented
in Section~\ref{subsec:1d}. The theorem therefore does not settle the general
multidimensional convergence problem for nonnegative potentials
under the standing assumptions.

Two explicit examples presented in Section~\ref{sec:examples}
illustrate the difficulty of identifying the right formula
for the candidate limit in the case of nonnegative potentials.
In each example, the discounted solutions converge, but the
proposed formula yields a candidate different from their
actual limit.

These examples suggest that a general selection principle must
also account for information carried by minimizing trajectories
that escape to infinity.
Such information already enters the one-dimensional theory:
the characterization in \cite[Theorem~2(II)]{CDD23} involves
a comparison with the periodic selected solution in a direction
determined by the minimizing dynamics.
In higher dimensions, the residual measures on the torus
retain information about escaping trajectories that the
whole-space Mather measures alone cannot capture.
This suggests a role for both families of measures in the
search for a more general selection principle.\smallskip

The paper is organized as follows.
Section~\ref{sec:prelim} introduces the notation and standing
assumptions and collects preliminary results on critical values,
closed measures and discounted equations.
Section~\ref{sec:bounds} establishes bounds and compactness
properties for the discounted family when the potential has
constant sign, as well as the existence of bounded critical
solutions for nonnegative potentials.
Section~\ref{sec:measures} introduces discounted measures,
derives the integral inequalities used in the proofs and
develops the decomposition accounting for loss of mass at infinity.
Section~\ref{sec:selection} proves the selection theorem for
nonpositive potentials and derives further properties of the
selected solution and Mather measures.
Section~\ref{sec:positive} proves the selection result for
nonnegative potentials under the additional condition on
periodic Mather averages.
Finally, Section~\ref{sec:examples} presents the counterexamples
and discusses the role of information from infinity in the
search for a more general selection principle.

\section{Preliminaries}\label{sec:prelim}
\numberwithin{equation}{section}
\setcounter{equation}{0}

\subsection{Notations and standing assumptions}

The scalar product in $\R^d$ will be denoted by
$\langle\,\cdot\,,\cdot\,\rangle$, while $|\cdot|$ stands for the
Euclidean norm. We denote by $B_r(x)$ the open ball of radius $r$ centered
at $x$, and write $B_r$ when $x=0$.
The flat torus $\R^d/\Z^d$ is denoted by $\TT^d$.
Functions on $\TT^d$ are identified with their $\Z^d$--periodic
lifts to $\R^d$.
Throughout the paper, $M$ denotes either $\MM$ or $\TT^d$,
according to the context.

We denote by $\CC(M)$ the space of continuous real-valued functions
on $M$, and by $\CC_b(M)$ and $\CC_c(M)$ its subspaces of bounded
functions and functions with compact support, respectively.
For $k=1,2,\ldots$, the space $\CC^k(M)$ consists of functions with
continuous derivatives up to order $k$, while $\CC^\infty(M)$ denotes
the space of smooth functions. We set
\[
 \CC_b^k(M):=\CC^k(M)\cap\CC_b(M),\qquad
 \CC_c^k(M):=\CC^k(M)\cap\CC_c(M),
 \qquad k=1,2,\ldots,\infty.
\]
Thus the subscript $b$ imposes boundedness on the function itself.
The space of globally Lipschitz real-valued functions on $M$ is
denoted by $\Lip(M)$. We use the same notation on open subsets
and tangent bundles. We write $u_n\ucv u$ in $M$ to mean that $u_n$
converges to $u$ uniformly on compact subsets of $M$.
The symbol $\|\cdot\|_\infty$ denotes the supremum norm, or the
essential supremum norm when appropriate.

By a curve we mean an absolutely continuous map from an interval
into $M$, with absolute continuity understood locally on unbounded
intervals. The corresponding class on $[a,b]$ is denoted by
$\mathrm{AC}([a,b];M)$. We write $\Lip_{y,x}([a,b];M)$ for the
Lipschitz curves $\gamma$ satisfying $\gamma(a)=y$ and $\gamma(b)=x$.

Throughout the paper, $H\in\CC(\TM)$ satisfies the following
assumptions:
\begin{itemize}
\item[(H1)] for every $x\in\MM$, the map $p\mapsto H(x,p)$ is convex;
\smallskip
\item[(H2)] there exist superlinear functions $\alpha,\beta:\R_+\to\R$
such that
\[
 \alpha(|p|)\leqslant H(x,p)\leqslant\beta(|p|)
 \qquad\hbox{for every $(x,p)\in\TM$;}
\]
\item[(H3)] $H(x+z,p)=H(x,p)$ for every $(x,p)\in\TM$ and $z\in\Z^d$.
\end{itemize}
Here superlinearity means that $\alpha(r)/r$ and $\beta(r)/r$ tend to
$+\infty$ as $r\to+\infty$. Given $V\in\CC_c(\MM)$, we set
\begin{equation*}
 G(x,p):=H(x,p)-V(x)\qquad\hbox{for every $(x,p)\in\TM$.}
\end{equation*}
The Hamiltonian $G$ satisfies (H1)--(H2), with possibly different
functions $\alpha$ and $\beta$. Both Hamiltonians
are uniformly continuous on $\MM\times\overline B_R$ for each $R>0$.
Convexity also implies a uniform Lipschitz bound in the momentum on each
such set.

Solutions, subsolutions and supersolutions will always be continuous and
understood in the viscosity sense; see \cite{barles,bardi}.
We recall that, for a convex
Hamiltonian $F$ satisfying the preceding uniform assumptions, a
Lipschitz function is a subsolution of $F(x,Du)=a$ if and only if
$F(x,Du(x))\leqslant a$ almost everywhere; see \cite{barles}.
Coercivity implies that all
subsolutions at a fixed level are equi--Lipschitz. In particular, a
possible common Lipschitz constant is
\begin{equation*}
 \kappa_a(F):=\sup\{|p|:F(x,p)\leqslant a\hbox{ for some }x\in\MM\}.
\end{equation*}
The same almost-everywhere characterization holds for
$\lambda u+F(x,Du)=a$. Every bounded subsolution of this equation is
globally Lipschitz; see \cite[Lemma~2.5]{barles}.

We will repeatedly use that maxima, minima and convex combinations of
two critical subsolutions are still subsolutions. For minima this uses
convexity; it can also be checked directly from the almost-everywhere
characterization. The same statement holds for a pointwise supremum or
infimum of equi--Lipschitz subsolutions whenever the resulting function
is finite; see \cite[Proposition~2.1]{DFIZ1}.

The following standard smooth approximation lemma will be useful in
integrating subsolution inequalities against measures.
For regularization arguments of this type, see \cite{barles};
more general geometric formulations are given in
\cite[Theorems~8.1 and~8.5]{fatmad}.
We include the proof under our standing assumptions.

\begin{lemma}\label{lem:smoothing}
Let $F$ be either $H$ or $G$, and let $w\in\Lip(\MM)$ satisfy
\[
 F(x,Dw)\leqslant a(x)\qquad\hbox{in $\MM$,}
\]
where $a$ is bounded and uniformly continuous. For every $\eps>0$ there
exists $w_\eps\in\CC^\infty(\MM)\cap\Lip(\MM)$ such that
\begin{equation*}
 \|w_\eps-w\|_\infty\leqslant\eps,
 \qquad F(x,Dw_\eps(x))\leqslant a(x)+\eps
 \quad\hbox{for every $x\in\MM$.}
\end{equation*}
The Lipschitz constant does not increase under this approximation.
If $w$ is bounded, then $w_\eps$ is bounded as well.
\end{lemma}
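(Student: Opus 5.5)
The natural route is convolution with a standard mollifier, followed by Jensen's inequality in the momentum. Let $\rho\in\CC_c^\infty(B_1)$ be nonnegative with unit integral, set $\rho_\delta(y):=\delta^{-d}\rho(y/\delta)$, and define $w_\delta:=\rho_\delta*w$. Let $L$ be the Lipschitz constant of $w$. The routine facts come first:
\[
 \|w_\delta-w\|_\infty\leqslant L\delta,\qquad w_\delta\in\CC^\infty(\MM),\qquad Dw_\delta=\rho_\delta*Dw,\qquad |Dw_\delta|\leqslant L .
\]
The Lipschitz constant therefore does not increase. Boundedness is also preserved, since $\|w_\delta\|_\infty\leqslant\|w\|_\infty$.

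The core step is the subsolution inequality. Fix $x$. Since $Dw_\delta(x)=\int\rho_\delta(x-y)\,Dw(y)\,dy$ is an average of values $Dw(y)$ with $|y-x|<\delta$, and $p\mapsto F(x,p)$ is convex by (H1), Jensen's inequality gives
\[
 F(x,Dw_\delta(x))\leqslant\int\rho_\delta(x-y)\,F(x,Dw(y))\,dy .
\]
For a.e. $y$ we know $F(y,Dw(y))\leqslant a(y)$, so
\[
 F(x,Dw(y))\leqslant a(y)+\big|F(x,Dw(y))-F(y,Dw(y))\big| .
\]
Because $|Dw(y)|\leqslant L$ a.e., the increment is at most $\omega_L(\delta)$, where $\omega_L$ is a modulus of continuity of $F$ in $x$, uniform on $\MM\times\overline B_L$. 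Likewise $a(y)\leqslant a(x)+\omega_a(\delta)$ by uniform continuity of $a$. Combining these bounds,
\[
 F(x,Dw_\delta(x))\leqslant a(x)+\omega_a(\delta)+\omega_L(\delta).
\]
The statement then follows by choosing $\delta$ small enough that $L\delta\leqslant\eps$ and $\omega_a(\delta)+\omega_L(\delta)\leqslant\eps$, and setting $w_\eps:=w_\delta$.

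The only point needing care is the existence of the uniform modulus $\omega_L$ on the whole of $\MM\times\overline B_L$, which is not compact. For $F=H$ it follows from periodicity (H3), since $H$ is then uniformly continuous on the compact set $[0,1]^d\times\overline B_L$ and periodic in $x$. For $F=G$ we additionally use that $V$ is continuous with compact support, hence uniformly continuous. This is exactly the uniform continuity on $\MM\times\overline B_R$ noted after the assumptions. I expect no further obstacle.
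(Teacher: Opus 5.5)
Your proposal is correct and follows the paper's proof essentially verbatim. Both mollify $w$, apply Jensen's inequality using convexity of $F(x,\cdot)$, and absorb the error through moduli of continuity of $F$ on $\MM\times\overline B_L$ and of $a$. Your justification of the uniform modulus (periodicity of $H$, uniform continuity of $V$) is a detail the paper states only in passing after the standing assumptions; to cover nearby pairs straddling the cell boundary, take uniform continuity on a slightly larger compact set such as $[-1,2]^d\times\overline B_L$ rather than $[0,1]^d\times\overline B_L$.
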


\begin{proof}
Let $\rho_\eta$ be a standard nonnegative convolution kernel supported
in $B_\eta$, and set $w_\eta=w*\rho_\eta$. If $\kappa$ is a Lipschitz
constant for $w$, then $\|w_\eta-w\|_\infty\leqslant\kappa\eta$ and
$\|Dw_\eta\|_\infty\leqslant\kappa$. By convexity,
\begin{align*}
 F(x,Dw_\eta(x))
 &\leqslant\int F(x,Dw(x-y))\rho_\eta(y)\,dy\\
 &\leqslant\int a(x-y)\rho_\eta(y)\,dy+\omega_F(\eta)
 \leqslant a(x)+\omega_a(\eta)+\omega_F(\eta),
\end{align*}
where $\omega_F$ is a modulus of continuity for $F$ on
$\MM\times\overline B_\kappa$ and $\omega_a$ is one for $a$.
Choosing $\eta$ sufficiently small proves the assertion.
\end{proof}

\subsection{Critical values and fundamental subsolutions}
\label{subsec:critical}

For $F=H,G$, we define the bounded critical value by
\begin{equation}\label{eq:cvalue}
 c(F):=\inf\big\{a\in\R:\hbox{$F(x,Du)=a$ in $\MM$\ \ admits bounded subsolutions}\big\}.
\end{equation}
The critical equation for the perturbed Hamiltonian is
\begin{equation}\tag{$\mathrm{HJ}_G$}\label{eq:criticalG}
 G(x,Du)=c(G)\qquad\hbox{in $\MM$.}
\end{equation}
We also introduce the free critical value
\begin{equation*}
 c_f(F):=\inf\big\{a\in\R:\hbox{$F(x,Du)=a$ in $\MM$\ \ admits subsolutions}\big\}.
\end{equation*}
The adjective free refers to the absence of any boundedness requirement.
We always have $c_f(F)\leqslant c(F)$. Both values are finite. Moreover,
the infimum defining $c_f(F)$ is attained: one takes subsolutions at
levels decreasing to $c_f(F)$, normalizes their value at the origin and
uses local compactness and viscosity stability. Thus subsolutions
exist at every level $a\geqslant c_f(F)$.

For the periodic Hamiltonian $H$, the value $c(H)$ agrees with the
periodic critical value. Indeed, if $w$ is a bounded subsolution at
level $a$, then
\[
 \inf_{z\in\Z^d}w(x+z),\qquad x\in\MM,
\]
is a finite periodic subsolution at the same level. The converse
inequality follows since every continuous periodic function is bounded.
In particular, by \cite{LPV}, there exists a periodic solution of
\begin{equation}\tag{$\mathrm{HJ}_H$}\label{eq:criticalH}
 H(x,Du)=c(H)\qquad\hbox{in $\MM$.}
\end{equation}

The next proposition compares the critical values of the perturbed
and unperturbed Hamiltonians.

\begin{prop}\label{prop:corder}
For every $V\in\CC_c(\MM)$, one has
\begin{equation*}
 c(G)\geqslant c(H).
\end{equation*}
If $V\geqslant0$, then $c(G)=c(H)$.
\end{prop}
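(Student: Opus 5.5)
The inequality $c(G)\geqslant c(H)$ will follow by pushing a bounded subsolution for $G$ forward to a bounded subsolution for $H$, using translation invariance to move the support of $V$ out of the way. The reverse inequality for $V\geqslant0$ will follow directly from $G\leqslant H$.

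\emph{First inequality.} Let $a>c(G)$ and let $w$ be a bounded subsolution of $G(x,Dw)=a$ in $\MM$. By the remarks in the preliminaries, $w$ is Lipschitz with constant $\kappa_a(G)$ and satisfies
\[
H(x,Dw(x))\leqslant a+V(x)\quad\hbox{a.e.}
\]
Choose $R>0$ with $\supp V\subset B_R$. For each $n\in\N$, fix an integer vector $z_n$ with $|z_n|\geqslant n+R$, for instance $z_n=(\lceil n+R\rceil,0,\dots,0)$, and set $w_n(x):=w(x+z_n)-w(z_n)$. By (H3),
\[
H(x,Dw_n(x))\leqslant a+V(x+z_n)\quad\hbox{a.e.},
\]
and $V(\cdot+z_n)$ vanishes on $B_n$. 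Thus $w_n$ is a subsolution of $H=a$ in $B_n$. The family $\{w_n\}$ is equi-Lipschitz, vanishes at $0$, and satisfies $\|w_n\|_\infty\leqslant 2\|w\|_\infty$. By Ascoli–Arzelà, a subsequence converges locally uniformly to a bounded function $\bar w$. Each fixed ball lies in $B_n$ for all large $n$, so viscosity stability shows that $\bar w$ is a subsolution of $H=a$ in all of $\MM$. Hence $a\geqslant c(H)$, and letting $a\downarrow c(G)$ gives $c(G)\geqslant c(H)$.

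\emph{Second statement.} Suppose $V\geqslant0$. Then $G\leqslant H$ pointwise. By \cite{LPV}, or by the periodicity argument preceding the statement, the periodic solution $u$ of \eqref{eq:criticalH} is a bounded subsolution of $H(x,Du)=c(H)$. It therefore satisfies $G(x,Du)\leqslant H(x,Du)\leqslant c(H)$ a.e., so it is a bounded subsolution of $G=c(H)$. This gives $c(G)\leqslant c(H)$, and equality follows from the first part.

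There is no serious obstacle. The only point needing care is the compactness and stability step in the first part. It requires the uniform bounds (equi-Lipschitz and uniformly bounded), which come from coercivity and from the boundedness of $w$, and it requires that the perturbation region escapes every compact set, which is exactly what the choice of the translations $z_n$ guarantees.
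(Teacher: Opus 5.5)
Your proof is correct and follows essentially the same route as the paper. You translate a bounded subsolution by integer vectors escaping to infinity, extract a locally uniform limit by Ascoli--Arzel\`a, and apply viscosity stability; the reverse inequality for $V\geqslant0$ comes from $G\leqslant H$. Subtracting $w(z_n)$ and working on the balls $B_n$, instead of using locally uniform convergence of $G(\cdot+z_n,\cdot)$ to $H$, are harmless cosmetic variations.
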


\begin{proof}
Pick $a>c(G)$ and a bounded subsolution $w$ of $G(x,Dw)=a$.
Let $(z_n)_n\subset\Z^d$ satisfy $|z_n|\to+\infty$, and put
$w_n(x)=w(x+z_n)$. The functions $w_n$ are equi--bounded and
equi--Lipschitz, hence a subsequence converges locally uniformly to a
bounded function $v$. Since
\[
 G(x+z_n,p)=H(x,p)-V(x+z_n)\ucv H(x,p)\quad\hbox{in $\TM$,}
\]
viscosity stability gives $H(x,Dv)\leqslant a$. It follows that
$c(H)\leqslant a$. Letting $a\downarrow c(G)$ proves the first assertion.
When $V\geqslant0$, the inequality $G\leqslant H$ gives the reverse
inequality between critical values.
\end{proof}

The Lagrangian associated with $F$ is defined by
\begin{equation*}
 L_F(x,q):=\sup_{p\in\R^d}\big\{\langle p,q\rangle-F(x,p)\big\}.
\end{equation*}
It is continuous, convex in $q$, superlinear and bounded on bounded
velocity sets, uniformly with respect to $x$. In particular,
\begin{equation}\label{eq:LG}
 L_G(x,q)=L_H(x,q)+V(x)\qquad\hbox{for every $(x,q)\in\TM$.}
\end{equation}

For every $a\in\R$, define
\begin{equation}\label{eq:actionS}
 S_a^F(y,x):=\inf_{\substack{t>0\\
                 \gamma\in\Lip_{y,x}([-t,0];\MM)}}
       \int_{-t}^0\big(L_F(\gamma(s),\dot\gamma(s))+a\big)\,ds.
\end{equation}
For the classical action construction, see \cite[Section~2-1]{ConItu};
see also \cite{FS05} for the metric approach. We give the details
needed here under our standing assumptions.

\begin{prop}\label{prop:metric}
Let $a\in\R$. The following properties hold.
\begin{itemize}
\item[(i)] A function $w:\MM\to\R$ is a subsolution of
$F(x,Dw)=a$ if and only if
\begin{equation}\label{eq:domination}
 w(\gamma(t_1))-w(\gamma(t_0))
 \leqslant\int_{t_0}^{t_1}
 \big(L_F(\gamma(s),\dot\gamma(s))+a\big)\,ds
\end{equation}
for every Lipschitz curve $\gamma:[t_0,t_1]\to\MM$.
Any function satisfying this inequality is Lipschitz, and the
inequality is equivalent to
\[
 w(x)-w(y)\leqslant S_a^F(y,x)
 \qquad\hbox{for all $x,y\in\MM$.}
\]
\item[(ii)] If $a<c_f(F)$, then $S_a^F\equiv-\infty$.
\item[(iii)] If $a\geqslant c_f(F)$, then $S_a^F$ is finite and Lipschitz
in both variables, $S_a^F(y,y)=0$, and
\[
 S_a^F(y,x)\leqslant S_a^F(y,z)+S_a^F(z,x),\qquad
 S_a^F(y,x)\leqslant\kappa_a(F)|x-y|.
\]
For each $y\in\MM$, the function $S_a^F(y,\cdot)$ is the largest
subsolution at level $a$ vanishing at $y$.
\end{itemize}
\end{prop}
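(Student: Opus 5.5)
We prove the three items in order, using only convex duality between $F$ and $L_F$, the almost-everywhere characterization of Lipschitz subsolutions, and Lemma~\ref{lem:smoothing}.

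\emph{Item (i).} Suppose first that $w$ is a subsolution of $F(x,Dw)=a$. Then $w$ is Lipschitz with constant $\kappa_a(F)$. By Lemma~\ref{lem:smoothing} (with $a$ constant), for every $\eps>0$ there is a smooth $w_\eps$ with $\|w_\eps-w\|_\infty\leqslant\eps$ and $F(x,Dw_\eps)\leqslant a+\eps$. Along a Lipschitz curve $\gamma$, the Fenchel inequality
\[
\langle Dw_\eps,\dot\gamma\rangle\leqslant L_F(\gamma,\dot\gamma)+F(\gamma,Dw_\eps)
\]
gives
\[
w_\eps(\gamma(t_1))-w_\eps(\gamma(t_0))\leqslant\int_{t_0}^{t_1}\big(L_F(\gamma,\dot\gamma)+a+\eps\big)\,ds.
\]
Letting $\eps\to0$ yields \eqref{eq:domination}.

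Conversely, suppose \eqref{eq:domination} holds. Applying it to straight segments traversed at constant speed $|q|=1$ shows that $w(x)-w(y)\leqslant C|x-y|$, with $C=\sup_{|q|\leqslant1}L_F+a$. Hence $w$ is Lipschitz. At a differentiability point $x$, use the segment $\gamma(s)=x+sq$ on $[-h,0]$, divide by $h$ and let $h\to0$. This gives $\langle Dw(x),q\rangle\leqslant L_F(x,q)+a$ for every $q$. Taking the supremum over $q$ and using that $F$ is the Legendre transform of $L_F$ (by convexity and continuity of $F$) yields $F(x,Dw(x))\leqslant a$ almost everywhere, so $w$ is a subsolution.

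The equivalence with $w(x)-w(y)\leqslant S_a^F(y,x)$ follows directly from the definition \eqref{eq:actionS}, after reparametrizing any curve onto $[-t,0]$.

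\emph{Item (ii).} Assume $a<c_f(F)$ and, for contradiction, that $S_a^F(y_0,x_0)>-\infty$ for some pair. First note that finiteness at one pair propagates. Since the concatenation inequality holds for the infimum (including the value $-\infty$), and straight segments give $S_a^F(y,x)<+\infty$ for all $x,y$, we get $S_a^F(y,x)\geqslant S_a^F(y,y_0)$, up to finite terms. The main point is then to show that $S_a^F(y_0,\cdot)$ is finite everywhere. Lower bounds propagate through $S(y_0,x_0)\leqslant S(y_0,x)+S(x,x_0)$ with $S(x,x_0)<+\infty$, so $S(y_0,x)>-\infty$. Once finite, $u=S_a^F(y_0,\cdot)$ satisfies $u(x)-u(z)\leqslant S_a^F(z,x)$ by the triangle inequality. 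By (i), $u$ is then a subsolution at level $a<c_f(F)$, contradicting the definition of $c_f(F)$.

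\emph{Item (iii).} Fix $a\geqslant c_f(F)$. A subsolution $w$ at level $a$ exists, and (i) gives $S_a^F(y,x)\geqslant w(x)-w(y)>-\infty$, so $S_a^F$ is finite. The remaining claims are proved as follows.
\begin{itemize}
\item[$\bullet$] The triangle inequality follows by concatenating curves.
\item[$\bullet$] For the bound $S_a^F(y,x)\leqslant\kappa_a(F)|x-y|$, note that $\kappa_a(F)|x-y|$ is the value obtained on segments of speed $|q|$, since $\inf_{r>0}\sup\{\ldots\}$ reduces to the support function of the sublevel set $\{F\leqslant a\}$. Concretely, take the segment with time $t=|x-y|/r$ and use $L_F(x,q)+a\leqslant\sup_{F(x,p)\leqslant a}\langle p,q\rangle$ for suitable $r$. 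This step is the most delicate: it requires the duality formula $\inf_{t>0} t\,\big(L_F(x,v/t)+a\big)=\sup_{F(x,p)\leqslant a}\langle p,v\rangle$, for fixed $x$. Because $x$ varies along the segment, it is cleaner to argue differently. The function $\sigma(z)=\kappa_a(F)|z-y|$ belongs to the family of subsolutions' upper envelope, since $S_a^F(y,\cdot)$ is the supremum of subsolutions vanishing at $y$, all of which are $\kappa_a$-Lipschitz. Hence the bound follows from maximality, proved next.
\item[$\bullet$] From $S(y,y)\geqslant0$ and constant curves of arbitrarily small duration, whose action is $t(L_F(y,0)+a)\to0$, we get $S(y,y)=0$.
\item[$\bullet$] Finiteness plus the triangle inequality and (i) show that $S_a^F(y,\cdot)$ is a subsolution. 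If $w$ is any subsolution with $w(y)=0$, then (i) gives $w\leqslant S_a^F(y,\cdot)$, so $S_a^F(y,\cdot)$ is the largest subsolution vanishing at $y$.
\item[$\bullet$] The subsolution property gives the $\kappa_a$-Lipschitz bound in $x$, and the Lipschitz property in $y$ follows symmetrically from the triangle inequality. Applying the Lipschitz bound to $S_a^F(y,\cdot)$ together with $S_a^F(y,y)=0$ yields $S_a^F(y,x)\leqslant\kappa_a(F)|x-y|$.
\end{itemize}
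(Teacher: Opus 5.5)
Your proof is correct and follows the paper's argument essentially step by step. In (i) you use smoothing plus Fenchel's inequality for one direction, and differentiation along short segments plus Fenchel duality for the other. In (ii) you propagate finiteness by concatenation and then invoke (i). In (iii) you use the lower bound from a subsolution, concatenation, constant curves, and maximality combined with the uniform $\kappa_a(F)$-Lipschitz bound. The only blemishes are expository: the garbled intermediate inequality in (ii), the abandoned duality detour, and the loose phrase that $\sigma$ ``belongs to the upper envelope'' in the $\kappa_a(F)$ bullet can simply be deleted, since your last bullet already gives the clean argument.
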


\begin{proof}
(i). Every subsolution satisfies \eqref{eq:domination} by
Lemma~\ref{lem:smoothing} and Fenchel's inequality.
Conversely, suppose that $w$ satisfies \eqref{eq:domination}.
Applying it to straight segments from $x$ to $y$ and from $y$ to $x$,
each traversed with speed one, gives
\[
 |w(x)-w(y)|\leqslant
 \left(\sup_{z\in\MM,\,|q|\leqslant1}|L_F(z,q)+a|\right)|x-y|.
\]
Thus $w$ is Lipschitz. At a differentiability point $x$, applying
\eqref{eq:domination} to $\gamma(s)=x+sq$, $0\leqslant s\leqslant h$,
dividing by $h$ and letting $h\to0^+$ yields
\[
 \langle Dw(x),q\rangle\leqslant L_F(x,q)+a
 \qquad\hbox{for every $q\in\R^d$.}
\]
Fenchel duality gives $F(x,Dw(x))\leqslant a$ almost everywhere,
hence also in the viscosity sense. The equivalence with the
increment inequality follows directly from \eqref{eq:actionS}.
\smallskip

(ii). Suppose that $a<c_f(F)$. A straight segment gives a finite
upper bound for $S_a^F(y,x)$. Concatenating with fixed connecting
segments shows that
if $S_a^F$ takes the value $-\infty$ at one pair, then it is
identically $-\infty$.
Otherwise, $S_a^F$ is finite everywhere, and concatenation gives
the triangle inequality. This and
\eqref{eq:actionS} imply that $S_a^F(y,\cdot)$ satisfies
\eqref{eq:domination}. By (i), it is a subsolution at level $a$,
contradicting $a<c_f(F)$. Thus $S_a^F\equiv-\infty$.
\smallskip

(iii). Suppose that $a\geqslant c_f(F)$. A subsolution $w$ at
level $a$ provides the finite lower bound
$S_a^F(y,x)\geqslant w(x)-w(y)$.
Together with the upper bound obtained from a straight segment,
this shows that $S_a^F$ is finite. Concatenation gives the triangle
inequality, and the argument in (ii) shows that $S_a^F(y,\cdot)$
is a subsolution at level $a$.

The lower bound gives $S_a^F(y,y)\geqslant0$,
while constant curves with duration tending to zero give the
reverse inequality. The increment inequality then shows that
$S_a^F(y,\cdot)$ is the largest subsolution vanishing at $y$.
Since subsolutions at level $a$ are $\kappa_a(F)$--Lipschitz,
we obtain $S_a^F(y,x)\leqslant\kappa_a(F)|x-y|$.
Together with the triangle inequality, this also gives
the Lipschitz bound in both variables.
\end{proof}

\subsection{Closed measures and the periodic selected solution}
\label{subsec:closed}

Let $TM$ denote the tangent bundle of $M$, identified with $M\times\R^d$.
We denote by $\pi_1:TM\to M$, $\pi_1(x,q)=x$, the canonical
projection onto the base. We denote measures on tangent bundles by
symbols with a tilde and their projections onto the base by the same
symbols without a tilde, for instance $\mu:=\pi_{1\#}\tilde\mu$.
This convention applies to both choices of $M$.
A nonnegative finite Borel measure $\tilde\mu$ on $TM$ is termed
\emph{closed} if
\begin{equation}\label{eq:closed}
 \int_{TM}|q|\,d\tilde\mu(x,q)<+\infty,
 \qquad
 \int_{TM}\langle D\varphi(x),q\rangle\,d\tilde\mu(x,q)=0
\end{equation}
for every $\varphi\in\CC_b^1(M)\cap\Lip(M)$. On the torus the test
space is simply $\CC^1(\TT^d)$. We include the zero measure among
closed finite measures, and denote by $\cmeasures(TM)$ the family of
closed probability measures. The latter is nonempty, since it contains
$\delta_{(x,0)}$ for every $x\in M$.

The following proposition gives two equivalent choices of the
test space on $\MM$.

\begin{prop}\label{prop:closed-tests}
Let $\tilde\mu$ be a nonnegative finite Borel measure on $\TM$ with
$\int_{\TM}|q|\,d\tilde\mu<+\infty$. The following conditions are
equivalent:
\begin{itemize}
\item[(i)] $\tilde\mu$ is closed;
\smallskip
\item[(ii)] the second identity in \eqref{eq:closed} holds for every
$\varphi\in\CC^1(\MM)\cap\Lip(\MM)$;
\smallskip
\item[(iii)] the second identity in \eqref{eq:closed} holds for every
$\varphi\in\CC_c^1(\MM)$.
\end{itemize}
\end{prop}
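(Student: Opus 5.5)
We argue through the chain (ii)$\Rightarrow$(i)$\Rightarrow$(iii)$\Rightarrow$(ii).

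The implication (ii)$\Rightarrow$(i) holds because $\CC_b^1(\MM)\cap\Lip(\MM)\subset\CC^1(\MM)\cap\Lip(\MM)$. The implication (i)$\Rightarrow$(iii) holds because every $\varphi\in\CC_c^1(\MM)$ is bounded and globally Lipschitz, its gradient being continuous with compact support. So the only substantive step is (iii)$\Rightarrow$(ii), and it is also where the moment assumption $\int|q|\,d\tilde\mu<+\infty$ is needed.

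For (iii)$\Rightarrow$(ii), fix $\varphi\in\CC^1(\MM)\cap\Lip(\MM)$ with Lipschitz constant $\kappa$, so that $|D\varphi|\leqslant\kappa$ everywhere. Choose a cutoff $\chi\in\CC_c^\infty(\MM)$ with $0\leqslant\chi\leqslant1$, $\chi\equiv1$ on $B_1$, $\supp\chi\subset B_2$, and set $\chi_R(x):=\chi(x/R)$, so that $|D\chi_R|\leqslant C/R$. The difficulty is that $\varphi$ may grow linearly, so $\varphi\,D\chi_R$ does not become small as $R\to+\infty$. To remove this, subtract the constant $\varphi(0)$ and apply (iii) to
\[
\varphi_R:=\chi_R\,(\varphi-\varphi(0))\in\CC_c^1(\MM).
\]
Since $|\varphi(x)-\varphi(0)|\leqslant\kappa|x|\leqslant2\kappa R$ on $\supp\chi_R$, we get
\[
D\varphi_R=\chi_R\,D\varphi+(\varphi-\varphi(0))\,D\chi_R ,
\]
where the second term is bounded by $2C\kappa$, uniformly in $R$, and vanishes on $B_R$.

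Hence (iii) gives
\[
0=\int\chi_R\langle D\varphi,q\rangle\,d\tilde\mu+\int(\varphi-\varphi(0))\langle D\chi_R,q\rangle\,d\tilde\mu .
\]
For the first integral, the integrand is dominated by $\kappa|q|$, which is $\tilde\mu$-integrable, and converges pointwise to $\langle D\varphi,q\rangle$. By dominated convergence this integral tends to $\int\langle D\varphi,q\rangle\,d\tilde\mu$.

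For the second integral, the integrand is bounded by $2C\kappa|q|\,\mathbbm{1}_{\{|x|\geqslant R\}}$. This tends to $0$ pointwise and is dominated by $2C\kappa|q|$, so the integral tends to $0$ again by dominated convergence.

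Letting $R\to+\infty$ yields $\int\langle D\varphi,q\rangle\,d\tilde\mu=0$, which is (ii).
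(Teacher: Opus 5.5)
Your proposal is correct and follows the same route as the paper. Both subtract a constant so that $\varphi$ grows at most linearly, apply (iii) to $\chi(\cdot/R)\,\varphi$, bound the cutoff term by a constant times $\int_{\{|x|\geqslant R\}}|q|\,d\tilde\mu$, and pass to the limit in the main term by dominated convergence.
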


\begin{proof}[Proof of Proposition~\ref{prop:closed-tests}]
The implications (ii)$\Rightarrow$(i)$\Rightarrow$(iii) are immediate.
Assume (iii), and fix $\varphi\in\CC^1(\MM)\cap\Lip(\MM)$.
Subtracting a constant, we may suppose $\varphi(0)=0$, so that
$|\varphi(x)|\leqslant\ell|x|$, where $\ell$ is a Lipschitz
constant for $\varphi$.
Choose $\chi\in\CC_c^\infty(\MM)$ with $0\leqslant\chi\leqslant1$,
$\chi=1$ on $B_1$ and $\supp(\chi)\subset\overline B_2$.
For $n\geqslant1$, set $\chi_n(x):=\chi(x/n)$.
Applying (iii) to $\chi_n\varphi$ gives
\[
 0=\int_{\TM}\chi_n(x)\langle D\varphi(x),q\rangle\,d\tilde\mu
   +\int_{\TM}\varphi(x)\langle D\chi_n(x),q\rangle\,d\tilde\mu.
\]
The second integral tends to zero, since
\[
 \left|\int_{\TM}\varphi(x)\langle D\chi_n(x),q\rangle
       \,d\tilde\mu\right|
 \leqslant2\ell\|D\chi\|_\infty
       \int_{\{|x|\geqslant n\}}|q|\,d\tilde\mu
 \longrightarrow0.
\]
The integrands in the first integral converge pointwise to
$\langle D\varphi(x),q\rangle$ and are bounded in absolute value
by $\ell|q|$. The Dominated Convergence Theorem therefore proves
(ii).
\end{proof}

The distinction between finite measures and probability measures will
be useful when part of the discounted mass is lost at infinity. The
following elementary fact applies to both and does not require the
subsolution to be bounded.

\begin{lemma}\label{lem:action}
Suppose $F(x,Dw)\leqslant a(x)$ admits a subsolution on $M$, where
$a$ is bounded and uniformly continuous on $M$. Then
\begin{equation}\label{eq:action-pos}
 \int_{TM}\bigl(L_F(x,q)+a(x)\bigr)\,d\tilde\mu(x,q)\geqslant0
\end{equation}
for every nonnegative finite closed measure $\tilde\mu$ on $TM$.
\end{lemma}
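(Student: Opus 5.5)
The plan is to smooth the subsolution, integrate Fenchel's inequality against the closed measure, and let the smoothing error vanish. The only issue is integrability, since the subsolution need not be bounded.

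Let $w$ be a subsolution of $F(x,Dw)\leqslant a(x)$ on $M$. Since $a$ is bounded, $w$ is also a subsolution at level $\|a\|_\infty$, so by coercivity it is globally Lipschitz. On $M=\MM$, Lemma~\ref{lem:smoothing} then provides, for each $\eps>0$, a function $w_\eps\in\CC^\infty(\MM)\cap\Lip(\MM)$ with
\[
F(x,Dw_\eps(x))\leqslant a(x)+\eps \quad\text{for every } x.
\]
On $M=\TT^d$ we apply the same convolution to the periodic lift. The mollified function is again periodic, so it is a $\CC^1(\TT^d)$ test function. The convexity argument in the proof of Lemma~\ref{lem:smoothing} works verbatim here, since $F=H$ on the torus.

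Fenchel's inequality $\langle p,q\rangle\leqslant L_F(x,q)+F(x,p)$ with $p=Dw_\eps(x)$ gives the pointwise bound
\[
\langle Dw_\eps(x),q\rangle \leqslant L_F(x,q)+a(x)+\eps \quad\text{for all } (x,q)\in TM.
\]
Next we integrate this against $\tilde\mu$. The left-hand side is integrable, because $|Dw_\eps|\leqslant\kappa$ and $\int|q|\,d\tilde\mu<+\infty$.

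On $\MM$, $w_\eps$ belongs to $\CC^1(\MM)\cap\Lip(\MM)$ but is possibly unbounded. By Proposition~\ref{prop:closed-tests}(ii), closedness still applies to such functions, so the left integral vanishes. On the torus it vanishes directly by the definition of closedness.

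The right-hand side is well defined in $(-\infty,+\infty]$. Indeed, $L_F$ is bounded below, because $L_F(x,q)\geqslant -F(x,0)\geqslant-\beta(0)$; moreover $a$ is bounded and $\tilde\mu$ is finite. We therefore obtain
\[
0\leqslant \int_{TM}\bigl(L_F(x,q)+a(x)\bigr)\,d\tilde\mu+\eps\,\tilde\mu(TM).
\]
Letting $\eps\to0$ gives \eqref{eq:action-pos}. If $\tilde\mu$ is the zero measure, the inequality is trivial.

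The only delicate point is that $w$, and hence $w_\eps$, need not be bounded. This is exactly what Proposition~\ref{prop:closed-tests}(ii) handles: it allows Lipschitz $\CC^1$ test functions, which is why the statement does not require a bounded subsolution.
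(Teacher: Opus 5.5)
Your proposal is correct and follows the paper's proof essentially step by step. You use that $w$ is Lipschitz by coercivity, mollify it via Lemma~\ref{lem:smoothing} (periodic convolution on the torus), integrate Fenchel's inequality against $\tilde\mu$, and kill the $\langle Dw_\eps,q\rangle$ term by closedness, via Proposition~\ref{prop:closed-tests} on $\MM$ for possibly unbounded tests; you then let $\eps\to0$, using that $L_F+a$ is bounded below so the action integral is always defined.
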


\begin{proof}
Let $w$ be such a subsolution. Uniform coercivity of $F$ and
boundedness of $a$ imply that $w$ is Lipschitz. Choose $w_\eps$ as in
Lemma~\ref{lem:smoothing}, using periodic convolution if $M=\TT^d$.
Fenchel's inequality gives
\[
 L_F(x,q)+a(x)\geqslant\langle Dw_\eps(x),q\rangle-\eps.
\]
Closedness, together with Proposition~\ref{prop:closed-tests}
when $M=\MM$, gives
\[
 \int_{TM}\langle Dw_\eps(x),q\rangle\,d\tilde\mu(x,q)=0.
\]
Integrating the preceding inequality and then letting $\eps\to0$
proves \eqref{eq:action-pos}.
Note that the integral
is always defined because $L_F+a$ is bounded from below.
\end{proof}

We denote by $\tilde\Mis(G)$ the set of measures
$\tilde\mu\in\cmeasures(\TM)$ satisfying
\begin{equation}\label{eq:MatherG}
 \int_{\TM}L_G(x,q)\,d\tilde\mu(x,q)=-c(G).
\end{equation}
Their projections under $\pi_1$ are termed projected Mather
measures, and form the set $\Mis(G)$.
Since a subsolution exists at level $c_f(G)$,
Lemma~\ref{lem:action} gives
\[
 \int_{\TM}L_G\,d\tilde\mu\geqslant-c_f(G)
 \qquad\hbox{for every $\tilde\mu\in\cmeasures(\TM)$.}
\]
Thus both $\tilde\Mis(G)$ and $\Mis(G)$ are empty whenever
$c(G)>c_f(G)$. If $\tilde\Mis(G)$ is nonempty, then
$c(G)=c_f(G)$ and its elements minimize the action among all
closed probability measures.

The next theorem provides the further information that the lower
bound $-c_f(F)$ is always the infimum of the whole-space action,
whether or not that infimum is attained.

\begin{teorema}\label{thm:free-action}
Under the standing assumptions, for $F=H,G$ one has
\begin{equation*}
\inf_{\tilde\mu\in\cmeasures(\TM)}
\int_{\TM}L_F(x,q)\,d\tilde\mu(x,q)
=-c_f(F).
\end{equation*}
\end{teorema}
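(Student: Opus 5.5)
The plan is to prove the two inequalities separately: the lower bound comes from Lemma~\ref{lem:action}, and the upper bound from occupation measures of closed loops, whose existence is supplied by Proposition~\ref{prop:metric}(ii).

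\emph{Lower bound.} As recalled in Subsection~\ref{subsec:critical}, the infimum defining $c_f(F)$ is attained. Hence $F(x,Dw)=c_f(F)$ admits a subsolution on $\MM$. Applying Lemma~\ref{lem:action} with the constant function $a\equiv c_f(F)$ gives, for every $\tilde\mu\in\cmeasures(\TM)$,
\[
 \int_{\TM}\bigl(L_F+c_f(F)\bigr)\,d\tilde\mu\geqslant0 .
\]
Since $\tilde\mu$ is a probability measure, this yields $\int L_F\,d\tilde\mu\geqslant-c_f(F)$.

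\emph{Upper bound.} Fix $a<c_f(F)$. By Proposition~\ref{prop:metric}(ii), $S_a^F\equiv-\infty$, and in particular $S_a^F(y,y)=-\infty$ for a fixed $y\in\MM$. By \eqref{eq:actionS}, there is then some $t>0$ and a Lipschitz loop $\gamma\in\Lip_{y,y}([-t,0];\MM)$ such that
\[
 \int_{-t}^0\bigl(L_F(\gamma,\dot\gamma)+a\bigr)\,ds<0 .
\]
Define the normalized occupation measure $\tilde\mu_\gamma$ on $\TM$ by
\[
 \int_{\TM} f\,d\tilde\mu_\gamma:=\frac1t\int_{-t}^0 f(\gamma(s),\dot\gamma(s))\,ds .
\]
This is a Borel probability measure with compact support, because $\gamma$ and $\dot\gamma$ are bounded; in particular $\int|q|\,d\tilde\mu_\gamma<+\infty$. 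It is closed: for every $\varphi\in\CC^1_b(\MM)\cap\Lip(\MM)$, the chain rule for the absolutely continuous function $\varphi\circ\gamma$ gives
\[
 \int_{\TM}\langle D\varphi(x),q\rangle\,d\tilde\mu_\gamma=\frac{\varphi(\gamma(0))-\varphi(\gamma(-t))}{t}=0,
\]
since $\gamma(0)=\gamma(-t)=y$. Therefore $\tilde\mu_\gamma\in\cmeasures(\TM)$ and
\[
 \int_{\TM} L_F\,d\tilde\mu_\gamma<-a .
\]
Letting $a\uparrow c_f(F)$ shows that the infimum is at most $-c_f(F)$. Combined with the lower bound, this proves the theorem.

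There is no substantial obstacle here; the only point needing care is the upper bound. One must extract from $S_a^F(y,y)=-\infty$ a single \emph{closed} competitor with negative action, which is exactly what the infimum in \eqref{eq:actionS} provides. One must also check that the corresponding occupation measure satisfies the closedness condition \eqref{eq:closed} against the whole test class, not just compactly supported functions. For loops this is immediate because the boundary term $\varphi(\gamma(0))-\varphi(\gamma(-t))$ vanishes for every $C^1$ function $\varphi$.
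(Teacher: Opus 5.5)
Your proposal is correct and matches the paper's proof essentially step for step. The lower bound comes from Lemma~\ref{lem:action} applied to a subsolution at level $c_f(F)$. The upper bound comes from the normalized occupation measure of a closed Lipschitz loop with negative $(L_F+a)$-action, extracted from $S_a^F(y,y)=-\infty$ via Proposition~\ref{prop:metric}(ii), followed by letting $a\uparrow c_f(F)$.
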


\begin{proof}
The lower bound follows from Lemma~\ref{lem:action}, since
$F(x,Dw)=c_f(F)$ admits a subsolution on $\MM$.

For the reverse inequality, fix $a<c_f(F)$ and $x_0\in\MM$.
By Proposition~\ref{prop:metric}, $S_a^F(x_0,x_0)=-\infty$.
By \eqref{eq:actionS}, there exist $T>0$ and a closed Lipschitz curve
$\gamma:[0,T]\to\MM$ such that
\[
 \int_0^T
 \bigl(L_F(\gamma(s),\dot\gamma(s))+a\bigr)\,ds<0.
\]
Define its normalized occupation measure $\tilde\mu_\gamma$
by
\[
 \int_{\TM}f(x,q)\,d\tilde\mu_\gamma(x,q)
 :=
 \frac1T\int_0^T
 f(\gamma(s),\dot\gamma(s))\,ds
 \qquad\hbox{for every $f\in\CC_b(\TM)$.}
\]
This is a probability measure with finite first moment, since
\[
 \int_{\TM}|q|\,d\tilde\mu_\gamma
 =\frac1T\int_0^T|\dot\gamma(s)|\,ds<+\infty.
\]
Moreover, for every $\varphi\in\CC^1(\MM)\cap\Lip(\MM)$,
\[
 \int_{\TM}\langle D\varphi(x),q\rangle
 \,d\tilde\mu_\gamma(x,q)
 =
 \frac{\varphi(\gamma(T))-\varphi(\gamma(0))}{T}
 =0,
\]
because $\gamma$ is closed.
Thus $\tilde\mu_\gamma\in\cmeasures(\TM)$, and
\[
 \inf_{\tilde\mu\in\cmeasures(\TM)}
 \int_{\TM}L_F\,d\tilde\mu
 \leqslant
 \int_{\TM}L_F\,d\tilde\mu_\gamma
 =
 \frac1T\int_0^T L_F(\gamma(s),\dot\gamma(s))\,ds
 <-a.
\]
Letting $a\uparrow c_f(F)$ completes the proof.
\end{proof}

For $H$, the notation $\tilde\Mis(H)$ refers to the periodic
Mather measures, namely the minimizers of
\[
 \min_{\tilde\mu\in\cmeasures(\TTorus)}
 \int_{\TTorus}L_H(x,q)\,d\tilde\mu(x,q)=-c(H);
\]
see, for instance, \cite[Theorem~5.7]{DFIZ1}.
In particular, $\tilde\Mis(H)$ is nonempty.
Their projections onto $\TT^d$ form the set $\Mis(H)$.
Whenever we consider measures for $H$ on the whole space,
we will state this explicitly.

The quotient map $x\mapsto x+\Z^d$ induces the map
$\tilde\pi:\TM\to\TTorus$ defined by
\begin{equation*}
 \tilde\pi(x,q):=(x+\Z^d,q).
\end{equation*}
If $\tilde\mu$ is a finite closed measure on $\TM$, then
$\tilde\pi_\#\tilde\mu$ is closed on $\TTorus$: a smooth periodic
function, lifted to $\MM$, is an admissible test in \eqref{eq:closed}.

We conclude this subsection by recording the part of periodic selection
which will be used below. For $\lambda>0$, let $u_H^\lambda$ be the
periodic solution of
\begin{equation}\tag{$\mathrm{HJ}_H^\lambda$}\label{eq:discountH}
 \lambda u+H(x,Du)=c(H)\qquad\hbox{in $\MM$.}
\end{equation}
By \cite{DFIZ1}, these solutions satisfy
\begin{equation}\label{eq:periodic-lim}
 \|u_H^\lambda-u_H^0\|_\infty\longrightarrow0
 \qquad\hbox{as $\lambda\to0^+$,}
\end{equation}
where $u_H^0$ is a periodic solution of \eqref{eq:criticalH} and
\begin{equation}\tag{$\mathrm{Av}_H$}\label{eq:period-norm}
 \int_{\TT^d}u_H^0(y)\,d\nu(y)\leqslant0
 \qquad\hbox{for every $\nu\in\Mis(H)$.}
\end{equation}
In particular, the periodic discounted solutions $u_H^\lambda$
are uniformly bounded for $0<\lambda\leqslant1$.

\subsection{Discounted equations and minimizing curves}
\label{subsec:disc}

For $F=H,G$, $a\in\R$ and $\lambda>0$, the equation
\begin{equation*}
 \lambda u+F(x,Du)=a\qquad\hbox{in $\MM$}
\end{equation*}
satisfies comparison between bounded subsolutions and supersolutions;
see \cite[Theorem~4.3]{barles}.
Perron's method, applied between constant sub- and supersolutions,
then gives a bounded solution; see \cite[Theorem~2.12]{barles}.
It is unique by comparison and Lipschitz by coercivity.
Comparison with constants gives
\begin{equation}\label{eq:lambdabound}
 \|\lambda u^\lambda\|_\infty\leqslant
 A_F(a):=\sup_{x\in\MM}|F(x,0)-a|.
\end{equation}
It follows from coercivity that a Lipschitz constant for $u^\lambda$
can be chosen independently of $\lambda$.
In particular, we denote by $u_G^\lambda$ the unique bounded solution of
\begin{equation}\tag{$\mathrm{HJ}_G^\lambda$}\label{eq:discountG}
 \lambda u+G(x,Du)=c(G)\qquad\hbox{in $\MM$.}
\end{equation}

We will use the representation formula
\begin{equation}\label{eq:disc-formula}
 u^\lambda(x)=\inf_{\gamma(0)=x}
 \int_{-\infty}^0\e^{\lambda s}
       \big(L_F(\gamma(s),\dot\gamma(s))+a\big)\,ds.
\end{equation}
For the convex superlinear setting considered here, the standard
variational construction also gives the following calibration property;
see \cite[Appendix~2]{DFIZ1}. The construction on the whole space uses
the uniform bounds in (H2).

\begin{prop}\label{prop:calibrated}
For every $\lambda>0$ and $x\in\MM$, there exists a Lipschitz curve
$\gamma_x^\lambda:(-\infty,0]\to\MM$, with $\gamma_x^\lambda(0)=x$,
such that, for every $t>0$,
\begin{equation*}
 u^\lambda(x)={}\e^{-\lambda t}u^\lambda(\gamma_x^\lambda(-t))
 +\int_{-t}^0\e^{\lambda s}
       \big(L_F(\gamma_x^\lambda(s),\dot\gamma_x^\lambda(s))+a\big)\,ds.
\end{equation*}
There exists $\kappa_*>0$, independent of $\lambda$ and $x$, such that
$\|\dot\gamma_x^\lambda\|_\infty\leqslant\kappa_*$.
In particular, this curve realizes the infimum in
\eqref{eq:disc-formula}.
\end{prop}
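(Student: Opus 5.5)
We prove Proposition~\ref{prop:calibrated} by the direct method in the calculus of variations on $(-\infty,0]$, after first showing that \eqref{eq:disc-formula} gives the solution and that near-optimal curves may be taken with bounded speed.

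\emph{Step 1: the formula.} Let $\hat u(x)$ denote the infimum in \eqref{eq:disc-formula}, taken over locally absolutely continuous curves with $\gamma(0)=x$ for which the integral is defined. Since $L_F$ is bounded from below, $\hat u$ is bounded from below. Constant curves give $\hat u\leqslant\sup_x|L_F(x,0)+a|/\lambda$. A standard splitting of the integral at time $-t$ yields the dynamic programming principle
\[
 \hat u(x)=\inf_{\gamma(0)=x}\Bigl\{\e^{-\lambda t}\hat u(\gamma(-t))+\int_{-t}^0\e^{\lambda s}\bigl(L_F(\gamma,\dot\gamma)+a\bigr)\,ds\Bigr\}.
\]
From this, the usual verification shows that $\hat u$ is a bounded viscosity solution. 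Indeed, the subsolution property follows from straight-line test curves and Fenchel duality. For the supersolution property we argue by contradiction with a smooth test function, using superlinearity to bound the velocities. By uniqueness of the bounded solution, $\hat u=u^\lambda$.

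\emph{Step 2: a priori speed bound.} This is the main obstacle. We need a $\kappa_*$ independent of $\lambda$ and $x$, whereas the naive bounds involve $1/\lambda$. We would argue along a minimizer by a reparametrization/energy argument, as in \cite[Appendix~2]{DFIZ1}.

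First we obtain the bound on minimizers of finite-horizon problems. For such problems, $\lambda$-independent Lipschitz bounds on $u^\lambda$ follow from coercivity, since $\|\lambda u^\lambda\|_\infty\leqslant A_F(a)$ gives $F(x,Du^\lambda)\leqslant a+A_F(a)$. Hence $u^\lambda$ is $\kappa$-Lipschitz with $\kappa:=\kappa_{a+A_F(a)}(F)$.

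Next, optimality on a short interval $[s,s+h]$, compared against the straight segment joining the same endpoints at speed one, gives
\[
 \int_s^{s+h}\e^{\lambda\tau}L_F(\gamma,\dot\gamma)\,d\tau\leqslant \kappa|\gamma(s+h)-\gamma(s)|+Ch .
\]
Combining this with the lower bound $L_F(x,q)\geqslant \alpha^*(|q|)$ and its superlinearity, and with Jensen's inequality, bounds the average speed on intervals where $\e^{\lambda\tau}$ is comparable to $\e^{\lambda s}$. Taking $h\leqslant 1$ makes the weight ratio bounded by $\e^{\lambda}$ for $\lambda\leqslant 1$, so the constant is uniform.

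To upgrade the average bound to an $L^\infty$ bound on $\dot\gamma$ we would use the time-reparametrization trick: perturb $\gamma(\tau)$ to $\gamma(\tau+\theta(\tau))$. Since $F$ is only continuous, there is no Euler--Lagrange equation, so we invoke the Du Bois-Reymond-type energy inequality (as in Clarke--Vinter and in \cite{DFIZ1}). If this proves too delicate, a fallback is to replace minimizers by piecewise-affine near-minimizers with speeds in a compact set. This still suffices for the calibration identity provided existence is obtained in Step~3 within the class $\{\|\dot\gamma\|_\infty\leqslant\kappa_*\}$.

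\emph{Step 3: existence and calibration.} Take a minimizing sequence $\gamma_n$ with $\|\dot\gamma_n\|_\infty\leqslant\kappa_*$, using Step~2 to restrict the class without changing the infimum. By Ascoli on each $[-T,0]$ and a diagonal argument, a subsequence converges locally uniformly to some $\gamma$, with $\dot\gamma_n\weakst\dot\gamma$ in $L^\infty$.

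Convexity of $L_F$ in $q$ and its continuity give lower semicontinuity of $\int_{-T}^0\e^{\lambda s}L_F\,ds$ on each finite window. The tails $\int_{-\infty}^{-T}$ are uniformly small, since $L_F$ is bounded on $\{|q|\leqslant\kappa_*\}$ and the weight is $\e^{\lambda s}$. Hence $\gamma$ attains $u^\lambda(x)$, with $\|\dot\gamma\|_\infty\leqslant\kappa_*$.

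Finally, splitting the optimal integral at $-t$ and applying the dynamic programming principle, the tail piece equals $\e^{-\lambda t}$ times a cost at $\gamma(-t)$. This cost is $\geqslant u^\lambda(\gamma(-t))$ by definition, and $\leqslant$ it by optimality of $\gamma$, since otherwise splicing in a better tail would improve on $\gamma$. This gives the stated calibration identity for every $t>0$.
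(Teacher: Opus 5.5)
There is a genuine gap in your Step~2, and Step~3 depends on it. Steps~1 and~3 are fine in outline, but Step~3 needs the infimum in \eqref{eq:disc-formula} to stay the same when you restrict to curves with $\|\dot\gamma\|_\infty\leqslant\kappa_*$, and Step~2 never proves this. Three things are missing:
\begin{itemize}
\item Your short-interval inequality holds only along an optimal curve. The existence of such a curve on $(-\infty,0]$ is exactly what Step~3 is supposed to deliver.
\item You never connect bounds on finite-horizon minimizers to a minimizing sequence for the infinite-horizon problem.
\item The passage from an average speed bound to an $L^\infty$ bound is handed to an unspecified reparametrization/Du Bois-Reymond argument, or to a fallback on near-minimizers that is not carried out. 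Such an argument would also have to handle the time-dependent weight $\e^{\lambda s}$ and give a constant uniform in $\lambda$ and $x$.
\end{itemize}
There are also two smaller slips. Comparing with a segment traversed at speed one changes the duration of the sub-arc, so it does not give your displayed inequality. And the superlinear lower bound on $L_F$ is $\beta^*(|q|)$, which comes from $F\leqslant\beta(|p|)$, not $\alpha^*$.

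The missing idea is to reverse the order, as the paper does. Existence needs no a priori speed bound. Since $L_F+a$ is bounded below, a minimizing sequence has $\int_{-T}^0L_F(\gamma_n,\dot\gamma_n)\,ds$ bounded for each $T$. Superlinearity then gives equi-integrability of $\dot\gamma_n$ on $[-T,0]$. Lower semicontinuity on each window, together with the lower bound on the tails, produces a minimizer. This is the standard construction the paper cites from \cite[Appendix~2]{DFIZ1`}, and calibration then follows by your splitting argument. The uniform speed bound is an elementary a posteriori consequence. Differentiating the calibration identity at a.e.\ time gives
\[
 L_F(\gamma,\dot\gamma)+a=\lambda u^\lambda(\gamma)+\frac{d}{ds}u^\lambda(\gamma)\leqslant A_F(a)+\kappa|\dot\gamma|,
\]
where $\kappa$ is the $\lambda$-independent Lipschitz constant of $u^\lambda$. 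Uniform superlinearity of $L_F$ then bounds $|\dot\gamma|$. Your own short-interval estimate is exactly the integrated form of this inequality, provided you derive it from calibration and the Lipschitz bound rather than from a segment comparison. Since it holds for every small $h$, Lebesgue differentiation already gives the $L^\infty$ bound, and no reparametrization is needed.
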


For completeness, the uniform speed bound can be seen from calibration
and the common Lipschitz bound for $u^\lambda$. Differentiating the
calibration identity along the curve at almost every time gives
\[
 L_F(\gamma,\dot\gamma)+a
 =\lambda u^\lambda(\gamma)
   +\frac{d}{ds}u^\lambda(\gamma)
 \leqslant A_F(a)+\kappa|\dot\gamma|.
\]
Uniform superlinearity of $L_F$ bounds $|\dot\gamma|$. This argument
uses the derivative of the composition $u^\lambda\circ\gamma$, which
exists almost everywhere, and does not require differentiability of
$u^\lambda$ at points of the curve.

A critical solution $v$, not necessarily bounded, likewise has,
through every $x$, a
backward calibrated curve $\gamma_x:(-\infty,0]\to\MM$ with $\gamma_x(0)=x$ satisfying
\begin{equation}\label{eq:crit-calibr}
 v(x)-v(\gamma_x(-t))
 =\int_{-t}^0\big(L_F(\gamma_x(s),\dot\gamma_x(s))+c(F)\big)\,ds
 \qquad(t>0).
\end{equation}
This is a standard consequence of the Lax--Oleinik representation formula for
the solution of $\partial_t U+F(x,D_xU)=0$ with initial datum $v$,
namely $U(t,x)=v(x)-c(F)t$; see, for instance,
\cite[Section~2]{II09}. See also \cite[Theorem~1.2]{Fat24}
for the Tonelli case on noncompact manifolds.
The speed of $\gamma_x$ is bounded by the same superlinearity argument.

\section{Bounds for the discounted solutions}\label{sec:bounds}

We first establish the existence of a bounded critical subsolution
for an arbitrary compactly supported potential. We then prove
local uniform bounds for the discounted solutions when $V\leqslant0$
and global uniform bounds when $V\geqslant0$. These estimates
yield a common compactness conclusion for potentials of constant sign.

\begin{prop}\label{prop:bdd-subsol}
There exists a bounded subsolution $v_G$ of \eqref{eq:criticalG}.
It can be chosen so that $v_G\leqslant0$ on $\MM$.
\end{prop}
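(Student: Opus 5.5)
The plan is to obtain $v_G$ as a limit of bounded subsolutions at levels $a_n\downarrow c(G)$. Before passing to the limit, these subsolutions are truncated between two translates of a periodic critical subsolution for $H$, so that they become uniformly bounded. The main obstacle is that the infimum in \eqref{eq:cvalue} need not be attained a priori. A bounded subsolution $w_n$ at level $a_n$ may have oscillation blowing up as $n\to\infty$, so a naive normalization only yields a possibly unbounded limit. The truncation is designed to remove this problem, and it works because $G=H$ outside $\supp V$.

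\emph{Setup.} Fix $R>0$ with $\supp V\subset B_R$. Let $u$ be a periodic, hence bounded, solution of \eqref{eq:criticalH}, which exists by \cite{LPV}. Take $a_n\downarrow c(G)$ with $a_n\leqslant a_1$, and bounded subsolutions $w_n$ of $G(x,Dw)=a_n$. These are $\kappa:=\kappa_{a_1}(G)$--Lipschitz, and we normalize them by $w_n(0)=0$, so that $|w_n|\leqslant\kappa R$ on $\overline B_R$. Set $K:=\kappa R+\|u\|_\infty+1$ and define
\[
 v_n:=\max\bigl\{\min\{w_n,\,u+K\},\,u-K\bigr\}.
\]
By the choice of $K$, we have $u-K<w_n<u+K$ on $\overline B_R$, hence also on a neighborhood $U$ of $\overline B_R$ by continuity and the uniform Lipschitz bounds. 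In particular $v_n=w_n$ on $U$, and $\|v_n\|_\infty\leqslant\|u\|_\infty+K$ uniformly in $n$.

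\emph{Subsolution property.} Being a subsolution is a local property, and the open sets $U$ and $\MM\setminus\supp V$ cover $\MM$. On $U$, $v_n=w_n$ is a subsolution at level $a_n$. On $\MM\setminus\supp V$ we have $G=H$, and $u\pm K$ satisfy $H(x,Du)\leqslant c(H)\leqslant c(G)\leqslant a_n$ by Proposition~\ref{prop:corder}. Hence $w_n$ and $u\pm K$ are all subsolutions of $G(x,Dw)=a_n$ there. Maxima and minima of subsolutions are subsolutions (as recalled in the preliminaries), so $v_n$ is a subsolution of $G(x,Dw)=a_n$ in $\MM$.

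\emph{Passage to the limit.} The $v_n$ are equi-bounded and equi-Lipschitz. By Arzel\`a--Ascoli, a subsequence converges locally uniformly to a bounded Lipschitz function $v$, with $\|v\|_\infty\leqslant\|u\|_\infty+K$. Viscosity stability, or equivalently the a.e.\ characterization together with the domination inequality of Proposition~\ref{prop:metric}(i) passed to the limit, shows that $v$ is a subsolution at level $c(G)$. Finally we set $v_G:=v-\sup v$, which is still a bounded subsolution of \eqref{eq:criticalG} and satisfies $v_G\leqslant0$.
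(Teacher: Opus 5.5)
Your proof is correct, but it takes a different route from the paper.

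The paper works directly at level $c(G)$. It starts from a possibly unbounded subsolution $w$, which exists because $c_f(G)\leqslant c(G)$ and the infimum defining $c_f$ is attained. It replaces $w$ by the maximal subsolution $S=S^G_{c(G)}(0,\cdot)$. It then proves that $S$ is bounded from below by a last-exit decomposition of curves: domination by $w$ up to the exit from $\overline B_r$, and by $u_H^0$ after it. Finally it truncates only from above, setting $\hat v_G=\min\{S,u_H^0+k\}$.

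You instead truncate on both sides between $u\pm K$. As a result you need no lower bound on the subsolution, no action functional and no curve decomposition; the gluing is purely local. The cost is that you work at levels $a_n\downarrow c(G)$ and need an Arzel\`a--Ascoli and stability step. That step amounts to proving directly that the infimum in \eqref{eq:cvalue} is attained, using only its definition. Your two-sided truncation could even be applied verbatim to a possibly unbounded subsolution at level $c(G)$, which would remove the limiting step altogether.

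What the paper's construction buys is the specific form $\min\{S,u_H^0+k\}$, which is reused in Proposition~\ref{prop:gap}. There, when $c(G)>c(H)$, the coercive lower bound on $S$ forces $\hat v_G=u_H^0+k$ outside a ball. This gives a subsolution that is strict outside a compact set, and hence a common compact support for all Mather measures. Your construction carries no such information about the behavior outside $B_R$. For the statement at hand, however, it is the more elementary and self-contained argument.
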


\begin{proof}
Since $c(G)\geqslant c_f(G)$, there exists a subsolution $w$ of
\eqref{eq:criticalG}, which we normalize by $w(0)=0$. Set
\[
 S(x):=S_{c(G)}^G(0,x),\qquad x\in\MM.
\]
We first show that $S$ is bounded from below. Choose $r>0$ so that
$\supp(V)\subset B_r$, and let $\kappa$ be a Lipschitz constant for
$w$. If $x\in\overline B_r$, domination gives
$S(x)\geqslant w(x)\geqslant-\kappa r$.

Let now $x\notin\overline B_r$, and consider a curve from $0$ to $x$
in the action representation \eqref{eq:actionS}. Denote by $z$ its
last exit point from $\overline B_r$. The action of the part ending
at $z$ is bounded from below by $w(z)-w(0)$. On the remaining part,
$L_G=L_H$ and $c(G)\geqslant c(H)$, so domination by the periodic
critical solution $u_H^0$ yields the lower bound
$u_H^0(x)-u_H^0(z)$. Thus
\[
 S(x)\geqslant-\kappa r-2\|u_H^0\|_\infty
 \qquad\hbox{for every $x\in\MM$.}
\]

Choose $k>0$ sufficiently large that $S<u_H^0+k$ on a neighborhood
of $\supp(V)$, and define
\[
 \hat v_G(x):=\min\{S(x),u_H^0(x)+k\},\qquad x\in\MM.
\]
Near $\supp(V)$ this function agrees with $S$. Away from $\supp(V)$,
both functions in the minimum are subsolutions at level $c(G)$.
Convexity therefore implies that $\hat v_G$ is a critical subsolution
on all of $\MM$. It is bounded from above by $u_H^0+k$, and from
below by the preceding estimate for $S$ and the boundedness of $u_H^0$.
Subtracting $\sup_{\MM}\hat v_G$ gives the required function $v_G$.
\end{proof}

Fix a bounded critical subsolution $v_G\leqslant0$ as above.
It is a subsolution of the discounted equation, so comparison gives
\begin{equation}\label{eq:discount-lower}
 v_G\leqslant u_G^\lambda\qquad\hbox{in $\MM$ for every $\lambda>0$.}
\end{equation}
There is also a common Lipschitz bound, without any sign assumption
on $V$. Indeed, by \eqref{eq:lambdabound}, every $u_G^\lambda$
is a subsolution of
\[
 G(x,Du)=c(G)+A_G(c(G))\qquad\hbox{in $\MM$},
\]
and the bound follows from uniform coercivity.

We first consider nonpositive potentials. If $V\leqslant0$,
comparison gives
\begin{equation}\label{eq:comparison}
 v_G\leqslant u_G^\lambda
 \leqslant u_H^\lambda+\frac{c(G)-c(H)}{\lambda}
 \qquad\hbox{in $\MM$.}
\end{equation}
Indeed, $v_G\leqslant0$ is a subsolution of \eqref{eq:discountG},
while the function on the right is a supersolution since $G\geqslant H$.
The upper bound in \eqref{eq:comparison} is uniform in $\lambda$
only if the critical values agree. The next proposition supplies the
local estimate needed in both regimes.

\begin{prop}\label{prop:compact}
Assume that $V\leqslant0$. The family
$\{u_G^\lambda:0<\lambda\leqslant1\}$ is equi--Lipschitz, has a common
constant lower bound, and is locally uniformly bounded from above.
More precisely, let $v_G\leqslant0$ be a bounded critical subsolution
as in Proposition~\ref{prop:bdd-subsol}, and let $\kappa>0$ be a common
Lipschitz constant for the functions $u_G^\lambda$. There exists
a constant $C>0$ such that
\begin{equation}\label{eq:localupper}
 -\|v_G\|_\infty\leqslant u_G^\lambda(x)
 \leqslant C+\kappa|x|
 \qquad\hbox{for every $x\in\MM$ and $0<\lambda\leqslant1$.}
\end{equation}

Consequently this family is precompact for local uniform convergence,
and $\lambda u_G^\lambda\ucv0$ as $\lambda\to0^+$.
When $c(G)=c(H)$, it is uniformly bounded on $\MM$.
\end{prop}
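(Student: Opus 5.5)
Since the functions $u_G^\lambda$ share the Lipschitz constant $\kappa$, the upper bound in \eqref{eq:localupper} follows from a uniform bound on the single value $u_G^\lambda(0)$. The lower bound is exactly \eqref{eq:discount-lower}, because $v_G\geqslant-\|v_G\|_\infty$. The idea for bounding $u_G^\lambda(0)$ is to argue by contradiction. If $u_G^\lambda(0)$ were large, we could glue $u_G^\lambda$ on a neighborhood of $\supp(V)$ with a translate of the periodic solution $u_H^0$ outside. This would produce a bounded subsolution of $G(x,Du)=a$ for some $a<c(G)$, contradicting the definition \eqref{eq:cvalue} of $c(G)$.

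Concretely, fix $R>0$ with $\supp(V)\subset B_R$ and set $m:=\|u_H^0\|_\infty$. For a fixed $\lambda\in(0,1]$, put $k:=u_G^\lambda(0)-\kappa R-m$ and
\[
 w:=\max\{u_G^\lambda,\,u_H^0+k\}.
\]
On $\overline B_R$ we have $u_G^\lambda\geqslant u_G^\lambda(0)-\kappa R=k+m\geqslant u_H^0+k$, so $w=u_G^\lambda$ there. At a.e.\ point where $w=u_G^\lambda$, we have $u_G^\lambda\geqslant k-m=u_G^\lambda(0)-\kappa R-2m$. The equation then gives
\[
 G(x,Dw)=c(G)-\lambda u_G^\lambda\leqslant c(G)-\lambda\bigl(u_G^\lambda(0)-\kappa R-2m\bigr).
\]
At a.e.\ point where $w=u_H^0+k$ and $w>u_G^\lambda$, we are outside $\overline B_R$. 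There $V=0$, so $G(x,Dw)=H(x,Du_H^0)\leqslant c(H)\leqslant c(G)$.

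In the regime $c(G)>c(H)$ the second level is already strictly below $c(G)$. In the regime $c(G)=c(H)$ we instead appeal directly to \eqref{eq:comparison}, which bounds $u_G^\lambda$ uniformly by $\|u_H^\lambda\|_\infty\leqslant C'$ on all of $\MM$. Now suppose $c(G)>c(H)$ and $u_G^\lambda(0)>\kappa R+2m$. Then $w$ is bounded, since both $u_G^\lambda$ and $u_H^0$ are. It is Lipschitz, and by the almost-everywhere characterization (the same argument as for maxima of subsolutions) it is a subsolution of $G(x,Du)=a$ with
\[
 a:=\max\bigl\{c(H),\,c(G)-\lambda(u_G^\lambda(0)-\kappa R-2m)\bigr\}<c(G).
\]
This is impossible. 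Hence $u_G^\lambda(0)\leqslant\kappa R+2m$, and \eqref{eq:localupper} holds with $C:=\max\{\kappa R+2m,\,C'\}$.

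The remaining claims are then routine. Equi-Lipschitz continuity together with the local bounds gives precompactness by Ascoli--Arzel\`a. The bounds in \eqref{eq:localupper} give $|\lambda u_G^\lambda(x)|\leqslant\lambda\bigl(C+\|v_G\|_\infty+\kappa|x|\bigr)\to0$ locally uniformly. When $c(G)=c(H)$, \eqref{eq:comparison} together with the uniform bound on $u_H^\lambda$ gives global boundedness.

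The step I expect to need the most care is the gluing. We must make sure that the region where $w$ coincides with the translate of $u_H^0$ avoids $\supp(V)$: since $V\leqslant0$, $u_H^0$ is not a $G$-subsolution at level $c(H)$ on $\supp(V)$. We must also make sure that the region where $w=u_G^\lambda$ sees a strictly positive lower bound for $u_G^\lambda$. The choice of $k$ above is designed to achieve both at once.
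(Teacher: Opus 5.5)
Your proof is correct, and it reaches the key upper bound by a different route from the paper. The paper treats both regimes at once, with $\delta:=c(G)-c(H)$:
\begin{itemize}
\item it first shows $\inf_{\MM}u_G^\lambda\leqslant0$, since otherwise $u_G^\lambda$ would be a bounded subsolution below $c(G)$;
\item it then uses backward calibrated curves for $u_G^\lambda$ as competitors in the periodic discounted problem, which gives the exterior estimate \eqref{eq:exterior} outside $\overline B_r$;
\item if $\min_{\overline B_r}u_G^\lambda>2C_H$, either $\delta/\lambda>C_H$, so $u_G^\lambda$ is uniformly positive, contradicting \eqref{eq:infdiscount}; or $\delta/\lambda\leqslant C_H$, and \eqref{eq:comparison} forces $u_G^\lambda\leqslant2C_H$.
\end{itemize}

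You instead dispose of the case $c(G)=c(H)$ directly with \eqref{eq:comparison}. In the positive-gap case you replace the dynamic-programming exterior estimate with a purely PDE gluing. The gap makes $u_H^0+k$ a strict $G$-subsolution off $\supp(V)$. Largeness of $u_G^\lambda(0)$ makes $u_G^\lambda$ strict wherever it realizes the maximum. Hence $\max\{u_G^\lambda,u_H^0+k\}$ is a bounded subsolution at a level below $c(G)$. Your choice of $k$ correctly guarantees both that the glued region avoids $\overline B_R$ and that $u_G^\lambda$ has a positive lower bound where it is active.

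What each route buys:
\begin{itemize}
\item Your argument is shorter and needs no calibrated curves, only the almost-everywhere characterization of Lipschitz subsolutions. In the gap case it even bounds $u_G^\lambda(0)$ for every $\lambda>0$.
\item The paper's argument avoids the case split and yields the single constant $C=2C_H+\kappa r$. It also produces the comparison \eqref{eq:exterior} between $u_G^\lambda$ and $u_H^\lambda$ outside the perturbation region.
\end{itemize}
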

\begin{proof}
Let us set
\[
 C_H:=\sup_{0<\lambda\leqslant1}\|u_H^\lambda\|_\infty<+\infty,
\]
and choose $r>0$ such that $\supp(V)\subset B_r$.
The common Lipschitz bound and the lower bound
$u_G^\lambda\geqslant-\|v_G\|_\infty$ were established above.
It remains to prove the local upper bound.

Fix $\lambda\in(0,1]$. We first observe that
\begin{equation}\label{eq:infdiscount}
 \inf_{\MM}u_G^\lambda\leqslant0.
\end{equation}
Indeed, if this infimum were a positive number $m$, the equation
for $u_G^\lambda$ would give
\[
 G(x,Du_G^\lambda)=c(G)-\lambda u_G^\lambda(x)
 \leqslant c(G)-\lambda m\qquad\hbox{in $\MM$.}
\]
Thus $u_G^\lambda$ would be a bounded subsolution at a level
strictly smaller than $c(G)$, contradicting \eqref{eq:cvalue}.
The infimum need not be attained for this argument.

Write $\delta=c(G)-c(H)\geqslant0$ for the duration of this proof.
We next estimate $u_G^\lambda$ outside the perturbation region.
Fix $x\notin\overline B_r$ and a backward calibrated curve
$\gamma=\gamma_x^\lambda$ for $u_G^\lambda$, as in
Proposition~\ref{prop:calibrated}. Define
\[
 t:=\inf\{s\geqslant0:\gamma(-s)\in\overline B_r\},
 \qquad\inf\emptyset:=+\infty.
\]
If $t<+\infty$, then $t>0$, $z:=\gamma(-t)\in\partial B_r$,
and $\gamma(s)\notin\overline B_r$ for $-t<s\leqslant0$.
Since $\supp(V)\subset B_r$, one has $L_G=L_H$ along this
portion of the curve. Calibration gives
\begin{align*}
 u_G^\lambda(x)
 ={}&\e^{-\lambda t}u_G^\lambda(z)
 +\int_{-t}^0\e^{\lambda s}
       \big(L_H(\gamma(s),\dot\gamma(s))+c(H)\big)\,ds\\
 &+\frac{\delta}{\lambda}(1-\e^{-\lambda t}).
\end{align*}
On the other hand, the same curve is an admissible competitor for
the periodic discounted problem. Its dynamic programming inequality
reads
\[
 u_H^\lambda(x)\leqslant\e^{-\lambda t}u_H^\lambda(z)
 +\int_{-t}^0\e^{\lambda s}
       \big(L_H(\gamma(s),\dot\gamma(s))+c(H)\big)\,ds.
\]
Subtracting this inequality from the preceding identity yields
\[
 u_G^\lambda(x)-u_H^\lambda(x)
 \geqslant\e^{-\lambda t}
       \big(u_G^\lambda(z)-u_H^\lambda(z)\big)
       +(1-\e^{-\lambda t})\frac{\delta}{\lambda}.
\]
The right-hand side is a convex combination of the boundary
difference at $z$ and $\delta/\lambda$.

If $t=+\infty$, the same calculation applies on $[-T,0]$ for
every $T>0$, with $\gamma(-T)$ in place of $z$. Both discounted
solutions are bounded for the fixed $\lambda$ under consideration,
so their difference multiplied by $\e^{-\lambda T}$ tends to zero
as $T\to+\infty$. We obtain
$u_G^\lambda(x)-u_H^\lambda(x)\geqslant\delta/\lambda$.
In either case,
\begin{equation}\label{eq:exterior}
\begin{split}
 u_G^\lambda(x)\geqslant u_H^\lambda(x)+
 \min\left\{\frac{\delta}{\lambda},\,
 \min_{z\in\partial B_r}
       \big(u_G^\lambda(z)-u_H^\lambda(z)\big)\right\}.
\end{split}
\end{equation}

We now show that
\[
 m_\lambda:=\min_{\overline B_r}u_G^\lambda\leqslant2C_H.
\]
Suppose, to the contrary, that $m_\lambda>2C_H$, and set
\[
 b_\lambda:=\min_{z\in\partial B_r}
       \big(u_G^\lambda(z)-u_H^\lambda(z)\big).
\]
Since $\|u_H^\lambda\|_\infty\leqslant C_H$, we have
$b_\lambda\geqslant m_\lambda-C_H>C_H$.
If also $\delta/\lambda>C_H$, \eqref{eq:exterior} would give
the uniform positive bound
\[
 u_G^\lambda(x)\geqslant
 -C_H+\min\{\delta/\lambda,b_\lambda\}>0
 \qquad\hbox{for every $x\notin\overline B_r$.}
\]
Together with $u_G^\lambda\geqslant m_\lambda>0$ on
$\overline B_r$, this would contradict \eqref{eq:infdiscount}.
We must therefore have $\delta/\lambda\leqslant C_H$.
But the upper comparison in \eqref{eq:comparison} then implies
\[
 u_G^\lambda\leqslant u_H^\lambda+\frac{\delta}{\lambda}
 \leqslant2C_H\qquad\hbox{in $\MM$,}
\]
contradicting $m_\lambda>2C_H$. This proves the claim without
distinguishing whether $\delta$ is zero or positive.

Choose $z_\lambda\in\overline B_r$ with
$u_G^\lambda(z_\lambda)=m_\lambda$. The common Lipschitz bound gives
\[
 u_G^\lambda(x)\leqslant u_G^\lambda(z_\lambda)
       +\kappa|x-z_\lambda|
 \leqslant2C_H+\kappa(|x|+r).
\]
This proves \eqref{eq:localupper} with $C:=2C_H+\kappa r$.
On each compact set the family is
thus uniformly bounded and equi--Lipschitz. The Ascoli--Arzel\`a
theorem and a diagonal extraction give precompactness for local
uniform convergence. The same local bounds imply
$\sup_K|\lambda u_G^\lambda|\to0$ for every compact
$K\subset\MM$. Finally, if $c(G)=c(H)$, \eqref{eq:comparison}
gives the global bounds
$-\|v_G\|_\infty\leqslant u_G^\lambda\leqslant C_H$.
\end{proof}

We now turn to nonnegative potentials. In this case
$c(G)=c(H)$ by Proposition~\ref{prop:corder}, and every periodic
critical solution of $H$ is a bounded critical subsolution of $G$.
We first use the compact support of $V$ to construct a bounded
critical solution above it. For the next theorem and its proof,
we write $c:=c(G)=c(H)$.

\begin{teorema}\label{thm:pos-existence}
Suppose $V\geqslant0$. For every periodic solution $u_H$ of
$H(x,Du_H)=c$, there exist a constant $C>0$ and a bounded
solution $w_G$ of $G(x,Dw_G)=c$ such that
\[
 u_H\leqslant w_G\leqslant u_H+C
 \qquad\hbox{in $\MM$.}
\]
Moreover, $w_G$ is the least bounded critical solution
of $G$ lying above $u_H$.
\end{teorema}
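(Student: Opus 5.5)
The plan is to build $w_G$ explicitly as a Lax--Oleinik/metric envelope of $u_H$ issued from the zero set of $V$, rather than by Perron's method. Set
\[
 K:=\{x\in\MM: V(x)=0\},\qquad
 w_G(x):=\inf_{y\in K}\big(u_H(y)+S_c^G(y,x)\big),\qquad x\in\MM .
\]
$K$ is closed and contains the complement of a ball $B_r\supset\supp(V)$. Since $G\leqslant H$, $u_H$ is a subsolution of $G(x,Du)=c$. Proposition~\ref{prop:metric}(i) therefore gives $u_H(x)-u_H(y)\leqslant S_c^G(y,x)$, hence $w_G\geqslant u_H$ everywhere. Taking $y=x$ gives $w_G=u_H$ on $K$. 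For $x\in B_r$, choosing $y\in\partial B_r\subset K$ and using $S_c^G(y,x)\leqslant\kappa_c(G)|x-y|$ yields
\[
 w_G\leqslant u_H+2\|u_H\|_\infty+2\kappa_c(G)r .
\]
This gives the two-sided bound with $C:=2\|u_H\|_\infty+2\kappa_c(G)r$, and in particular $w_G$ is finite and bounded.

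Next I check that $w_G$ is a solution. It is a subsolution because it is a finite infimum of the equi-Lipschitz subsolutions $u_H(y)+S_c^G(y,\cdot)$ (Proposition~\ref{prop:metric}(iii) and the stability remark in Section~\ref{sec:prelim}). On the open set $\{V>0\}=\MM\setminus K$, the supersolution property follows from the standard fact that $\inf_{y\in K}(u_H(y)+S_c^G(y,\cdot))$ is a solution off $K$. I would prove it by the usual argument: a test function touching from below at $x_0\notin K$ with $G(x_0,D\varphi)<c$ contradicts optimality of almost-minimizing curves reaching $x_0$ from $K$. At points $x_0\in K$, any $C^1$ function touching $w_G$ from below at $x_0$ also touches $u_H$ from below there, since $w_G\geqslant u_H$ with equality at $x_0$. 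Because $u_H$ is an $H$-solution, $H(x_0,D\varphi(x_0))\geqslant c$, and since $V(x_0)=0$ this is $G(x_0,D\varphi(x_0))\geqslant c$. So $w_G$ is a bounded critical solution of $G$ with $u_H\leqslant w_G\leqslant u_H+C$.

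For minimality, let $v$ be any bounded critical solution of $G$ with $v\geqslant u_H$; on $K$ we already have $v\geqslant u_H=w_G$. Fix $x\notin K$ and a backward calibrated curve $\gamma$ for $v$ as in \eqref{eq:crit-calibr}. If $\gamma(-t)\in K$ for some $t$, then
\[
 v(x)=v(\gamma(-t))+\int_{-t}^0(L_G+c)\,ds\geqslant u_H(\gamma(-t))+S_c^G(\gamma(-t),x)\geqslant w_G(x).
\]
The main obstacle is the case where $\gamma$ never meets $K$. Then $\gamma$ stays in $\supp(V)\subset B_r$, and I would argue as follows. Writing $L_G=L_H+V$ and using domination of $L_H+c$ by $u_H$, we get
\[
 \int_{-t}^0 V(\gamma)\,ds\leqslant v(x)-v(\gamma(-t))-u_H(x)+u_H(\gamma(-t)),
\]
which is bounded uniformly in $t$. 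Since $\gamma$ has bounded speed and $V$ is uniformly continuous, this forces $V(\gamma(-t))\to0$, so $\operatorname{dist}(\gamma(-t),K)\to0$. Picking $y_n\in K$ with $|y_n-\gamma(-t_n)|\to0$ and using the Lipschitz continuity of $v$, $u_H$ and $S_c^G$ in the first variable, we obtain
\[
 v(x)\geqslant v(\gamma(-t_n))+S_c^G(\gamma(-t_n),x)\geqslant u_H(y_n)+S_c^G(y_n,x)-o(1),
\]
hence $v(x)\geqslant w_G(x)$. The delicate points to verify carefully are this decay step and the supersolution property of the envelope off $K$.
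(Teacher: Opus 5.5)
There is a genuine gap. Your supersolution check at points of $K$ fails, and it cannot be patched. From $w_G\geqslant u_H$ with equality at $x_0$ you only get $D^-u_H(x_0)\subset D^-w_G(x_0)$. A $C^1$ function touching $w_G$ from below at $x_0$ need not lie below $u_H$, so the implication you invoke runs the wrong way.

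In fact your envelope is exactly the largest critical subsolution of $G$ that is $\leqslant u_H$ on $K$, and this is in general not a solution. Take $d=1$, $H(x,p)=\tfrac12(p+1)^2$, so that $c=\tfrac12$ and $u_H\equiv0$ is a periodic solution, and take $V\geqslant0$ with $\{V>0\}=(a,b)$. Here $S_c^G(y,x)=\int_y^x(\sqrt{1+2V}-1)\,dt$ for $x\geqslant y$ and $S_c^G(y,x)=\int_x^y(\sqrt{1+2V}+1)\,dt$ for $x\leqslant y$. So for $x<b$ close to $b$ your formula gives $w_G(x)=\int_x^b(\sqrt{1+2V}+1)\,dt$, while $w_G=0$ on $[b,+\infty)$. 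Hence $w_G$ has left derivative $-2$ and right derivative $0$ at $b$. The function $\varphi(x)=b-x$ touches it from below there, and $G(b,\varphi'(b))=G(b,-1)=0<\tfrac12$.

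The least bounded solution above $u_H$ in this example is $\int_{-\infty}^x(\sqrt{1+2V}-1)\,dt$. It equals $\int_a^b(\sqrt{1+2V}-1)\,dt>0$ on $[b,+\infty)\subset K$. Downstream of the potential, along the $u_H$-calibrated dynamics, the minimal solution must lie strictly above $u_H$ even where $V=0$. So no envelope pinned to $u_H$ on all of $\{V=0\}$ can produce it. Your minimality argument, including the decay step, is correct, but it only shows that $v$ dominates your envelope, which is too weak.

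The missing idea is a construction that lets the solution rise above $u_H$ on $\{V=0\}$. The paper takes $U(t,\cdot)=\S_G(t)u_H$, which is nondecreasing in $t$ because $u_H$ is a subsolution for $G$. The real work is the uniform bound $U(t,\cdot)\leqslant u_H+C$, obtained by testing $U(t,x)$ with backward $u_H$-calibrated curves. Either one uses a bounded such curve, translated by an integer vector so that it avoids $\overline B_r$. Or one uses an unbounded one, replacing the stretch between its first and last contacts with $\overline B_r$ by a unit-time segment; this produces admissible times $\hat t_n\to+\infty$. The monotone limit is then a fixed point of the semigroup, hence a critical solution. Minimality is immediate from order preservation: $\S_G(t)u_H\leqslant\S_G(t)v=v$ for every bounded critical solution $v\geqslant u_H$.
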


\begin{proof}
Let $(\S_G(t))_{t\geqslant0}$ denote the Lax--Oleinik semigroup
associated with the running cost $L_G+c$, defined for
uniformly continuous functions $\varphi:\MM\to\R$ by
\[
 (\S_G(t)\varphi)(x)
 :=
 \inf_{\substack{\gamma\in\mathrm{AC}([-t,0];\MM)\\
                  \gamma(0)=x}}
 \left\{
 \varphi(\gamma(-t))
 +\int_{-t}^0
 \bigl(L_G(\gamma(s),\dot\gamma(s))+c\bigr)\,ds
 \right\},
 \qquad t>0,
\]
with $\S_G(0)\varphi=\varphi$.
For its standard properties and their relation to viscosity
solutions, see, for instance, \cite[Section~2]{II09} and
\cite[Section~2.2]{DZ10}.
Set $U(t,\cdot):=\S_G(t)u_H$.
Since $G\leqslant H$, $u_H$ is a critical subsolution for $G$.
The domination inequality and the order-preserving semigroup
property give
\begin{equation}\label{eq:pos-LO-monotone}
 u_H\leqslant U(t,\cdot),\qquad
 U(t,\cdot)\leqslant U(t+s,\cdot)=\S_G(t)U(s,\cdot)
 \quad(s,t>0).
\end{equation}

We claim that there exists a constant $C>0$, independent of
$t$ and $x$, such that
\begin{equation}\label{eq:pre-LO-bound}
 U(t,\cdot)\leqslant u_H+C
 \quad\hbox{on $\MM$,}
 \qquad\hbox{for every $t>0$.}
\end{equation}
Choose $r>0$ with $\operatorname{supp}V\subset B_r(0)$ and,
for $R\geqslant0$, set
\[
 M_R:=\sup_{y\in\MM,\,|q|\leqslant R}|L_G(y,q)+c|<+\infty.
\]
Backward calibrated curves for $u_H$ exist through every point,
are calibrated on every subinterval, and have a common speed
bound $\kappa_*>0$; see \eqref{eq:crit-calibr}.
Their integer translates remain calibrated by periodicity.
Outside $\overline B_r(0)$ one has $L_G=L_H$.

First suppose that there exists such a curve $\gamma_*$
with bounded image, and set
\[
 a:=\gamma_*(0),\qquad
 A:=\overline{\gamma_*((-\infty,0])}.
\]
Only finitely many $z\in\Z^d$ satisfy
$(A+z)\cap\overline B_r(0)\ne\emptyset$.
Consequently, there exists $D<+\infty$ such that, for every
$x\in\MM$, one can choose $z\in\Z^d$ with
\[
 (A+z)\cap\overline B_r(0)=\emptyset,\qquad
 |a+z-x|\leqslant D.
\]
For $t>1$, follow $\gamma_*+z$ for $t-1$ units of time,
ending at $a+z$, and then join $a+z$ to $x$ by a straight
segment traversed in one unit of time.
Calibration along the first arc and the speed bound $D$
on the segment give
\begin{equation}\label{eq:pos-LO-bound-one}
 U(t,x)\leqslant u_H(a+z)+M_D
 \leqslant u_H(x)+2\|u_H\|_\infty+M_D.
\end{equation}
By \eqref{eq:pos-LO-monotone}, the same bound holds for
$0<t\leqslant1$.

Suppose instead that every backward calibrated curve has
unbounded image. Fix $x\in\MM$, choose such a curve $\gamma$
ending at $x$, and take a sequence $t_n\to+\infty$ such that
$y_n:=\gamma(-t_n)$ satisfies $|y_n|\to+\infty$.

If $\gamma([-t_n,0])$ avoids $\overline B_r(0)$, set
$\hat t_n:=t_n$. Calibration then gives
\begin{equation}\label{eq:pos-LO-bound-easy}
 U(\hat t_n,x)\leqslant u_H(x).
\end{equation}

Otherwise, let
$-t_n\leqslant\sigma_n\leqslant\tau_n\leqslant0$
be the first and last contact times of $\gamma|_{[-t_n,0]}$
with $\overline B_r(0)$, and set
\[
 a_n:=\gamma(\sigma_n),\qquad b_n:=\gamma(\tau_n).
\]
Retain the portions of $\gamma$ before $\sigma_n$ and after
$\tau_n$, replacing the entire portion between them by the
straight segment from $a_n$ to $b_n$ traversed in one unit
of time.
The retained arcs lie outside $\overline B_r(0)$ apart from
their endpoints and are calibrated for $H$.
The connecting segment has speed at most $2r$.
Thus, setting $\hat t_n:=t_n-(\tau_n-\sigma_n)+1$ for the
new duration, we obtain
\begin{equation}\label{eq:pos-LO-bound-two}
 U(\hat t_n,x)\leqslant
 u_H(x)+u_H(a_n)-u_H(b_n)+M_{2r}
 \leqslant u_H(x)+2\|u_H\|_\infty+M_{2r}.
\end{equation}
Moreover, the retained arc $\gamma|_{[-t_n,\sigma_n]}$
joins $y_n$ to $a_n\in\overline B_r(0)$ and has speed at
most $\kappa_*$, so
\[
 \hat t_n
 =(t_n+\sigma_n)+1-\tau_n
 \geqslant t_n+\sigma_n
 \geqslant
 \frac{\operatorname{dist}(y_n,\overline B_r(0))}{\kappa_*}
 \longrightarrow+\infty.
\]

We have therefore constructed a sequence $\hat t_n\to+\infty$
such that, for each $n$, either \eqref{eq:pos-LO-bound-easy}
or \eqref{eq:pos-LO-bound-two} holds.
Monotonicity in \eqref{eq:pos-LO-monotone} extends the common
upper bound to every $t>0$.
Together with \eqref{eq:pos-LO-bound-one}, this proves
\eqref{eq:pre-LO-bound}.

It follows that $U(t,x)$ increases to a function $w_G$ with
$u_H\leqslant w_G\leqslant u_H+C$. Moreover,
\[
 U(t,\cdot)\leqslant U(t+s,\cdot)=\S_G(s)U(t,\cdot)
 \qquad\hbox{for every $s,t>0$,}
\]
so every $U(t,\cdot)$ is a critical subsolution.
Uniform coercivity of $G$ therefore gives a common Lipschitz
constant for this family, and its convergence to $w_G$ is
locally uniform.

A standard semigroup argument shows that $w_G$ is a critical
solution. We include the argument for the reader's convenience.
For each fixed $s>0$, the Lax--Oleinik operator $\S_G(s)$
is continuous with respect to locally uniform convergence
on uniformly bounded, equi--Lipschitz families.
Indeed, superlinearity of $L_G$ and the common bound on the
initial data confine the starting points of nearly minimizing
curves to a common bounded set when the endpoints range over
a compact set. Passing to the limit as $t\to+\infty$ in the
semigroup identity gives
\[
 \S_G(s)w_G
 =\lim_{t\to+\infty}\S_G(s)U(t,\cdot)
 =\lim_{t\to+\infty}U(t+s,\cdot)
 =w_G.
\]
Thus $w_G$ is a fixed point of the semigroup, hence a
stationary viscosity solution of the evolutionary equation
$\partial_t U+G(x,D_xU)=c$. Consequently,
$G(x,Dw_G)=c$ in $\MM$.

Finally, if $v$ is any bounded critical solution with
$v\geqslant u_H$, order preservation and stationarity imply
\[
 U(t,\cdot)=\S_G(t)u_H\leqslant\S_G(t)v=v.
\]
Letting $t\to+\infty$ yields $w_G\leqslant v$.
This proves that $w_G$ is the least bounded critical solution
of $G$ lying above $u_H$.
\end{proof}

The bounded critical solutions just constructed provide supersolutions
of the discounted equation after adding a suitable constant. Together
with comparison with the periodic discounted solutions, this gives
global uniform bounds.

\begin{prop}\label{prop:pos-bdd}
Suppose $V\geqslant0$. Then the family
$\{u_G^\lambda:0<\lambda\leqslant1\}$ is uniformly bounded on
$\MM$, equi--Lipschitz and precompact for local uniform convergence.
Moreover, $\lambda u_G^\lambda\to0$ uniformly on $\MM$
as $\lambda\to0^+$.
\end{prop}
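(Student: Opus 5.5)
The plan is to sandwich $u_G^\lambda$ between two functions that are bounded uniformly in $\lambda$, and then obtain the compactness statements from the common Lipschitz bound. Here $V\geqslant0$, so $c(G)=c(H)=:c$ by Proposition~\ref{prop:corder}.

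\emph{Upper bound.} Since $G\leqslant H$, $u_H^\lambda$ is a subsolution of \eqref{eq:discountG}: indeed $\lambda u_H^\lambda+G(x,Du_H^\lambda)\leqslant\lambda u_H^\lambda+H(x,Du_H^\lambda)=c$. This is useful only as a lower bound, so the upper bound has to come from elsewhere. I would apply Theorem~\ref{thm:pos-existence} with $u_H:=u_H^0$, the periodic selected solution. This gives a bounded critical solution $w_G$ with $u_H^0\leqslant w_G\leqslant u_H^0+C$. The function $w_G+\|w_G\|_\infty$ is nonnegative, so
\[
 \lambda\bigl(w_G+\|w_G\|_\infty\bigr)+G(x,Dw_G)\geqslant c .
\]
Hence it is a bounded supersolution of \eqref{eq:discountG}, and comparison (valid between bounded sub- and supersolutions, \cite[Theorem~4.3]{barles}) yields
\[
 u_G^\lambda\leqslant w_G+\|w_G\|_\infty\leqslant 2\|u_H^0\|_\infty+2C .
\]
This bound is independent of $\lambda$.

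\emph{Lower bound.} Comparison with $u_H^\lambda$ gives $u_G^\lambda\geqslant u_H^\lambda\geqslant -C_H$, where $C_H:=\sup_{0<\lambda\leqslant1}\|u_H^\lambda\|_\infty<+\infty$ by \eqref{eq:periodic-lim}. Alternatively, \eqref{eq:discount-lower} gives $u_G^\lambda\geqslant v_G\geqslant-\|v_G\|_\infty$. Either way, $\sup_{0<\lambda\leqslant1}\|u_G^\lambda\|_\infty<+\infty$.

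\emph{Compactness.} The common Lipschitz bound was already established, without any sign condition on $V$, from \eqref{eq:lambdabound} and uniform coercivity. Equi-boundedness together with equi-Lipschitz continuity then gives precompactness for local uniform convergence, by Ascoli--Arzel\`a and a diagonal extraction over an exhaustion of $\MM$ by balls. Finally, $\|\lambda u_G^\lambda\|_\infty\leqslant\lambda\sup_{0<\lambda\leqslant1}\|u_G^\lambda\|_\infty\to0$, which gives uniform convergence on all of $\MM$.

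There is no serious obstacle, since the substantive work is already contained in Theorem~\ref{thm:pos-existence}. The one point to check carefully is that shifting the critical solution by a nonnegative constant really produces a supersolution of the discounted equation. This is what forces the choice of the shift $+\|w_G\|_\infty$, so that $\lambda(\,\cdot\,)\geqslant0$. One also needs the comparison principle on the whole space, which holds because both functions are bounded and $G$ satisfies the uniform bounds in (H2).
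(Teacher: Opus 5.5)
Your proposal is correct and follows essentially the same route as the paper. The nonnegative shift of the bounded critical solution from Theorem~\ref{thm:pos-existence} supplies the upper barrier, and comparison with $u_H^\lambda$ (using $G\leqslant H$ and $c(G)=c(H)$) supplies the lower barrier. Ascoli--Arzel\`a, applied with the common Lipschitz bound, then gives precompactness, and the uniform bound gives $\lambda u_G^\lambda\to0$ uniformly.
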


\begin{proof}
Let $w_G$ be a bounded critical solution given by
Theorem~\ref{thm:pos-existence}. Adding a constant, we may assume
$w_G\geqslant0$. It is then a supersolution of the discounted
equation. Since $G\leqslant H$ and $c(G)=c(H)$, comparison gives
\begin{equation}\label{eq:pos-compare}
 u_H^\lambda\leqslant u_G^\lambda\leqslant w_G
 \qquad\hbox{in $\MM$.}
\end{equation}
The periodic discounted solutions $u_H^\lambda$ are uniformly
bounded for $0<\lambda\leqslant1$, so the same is true of
$u_G^\lambda$. Together with the common Lipschitz bound established
above, this gives local precompactness by the Arzel\`a--Ascoli
theorem. The uniform bound also implies
$\lambda u_G^\lambda\to0$ uniformly on $\MM$.
\end{proof}

We can now collect the properties of accumulation points that
will be used in the selection arguments. Write
\begin{equation*}
 \omegasetG:=\big\{u\in\CC(\MM):
  u_G^{\lambda_n}\ucv u\hbox{ for some }\lambda_n\to0^+\big\}
\end{equation*}
for the set of accumulation points.

\begin{cor}\label{cor:clusters}
Assume that $V$ has constant sign. Then $\omegasetG$
is nonempty, and every $u\in\omegasetG$ is a Lipschitz solution
of \eqref{eq:criticalG} satisfying $u\geqslant v_G$.
Moreover,
\[
\begin{aligned}
 u&\leqslant u_H^0
 &&\quad\hbox{if $V\leqslant0$\quad and\quad $c(G)=c(H)$,}\\
 u&\geqslant u_H^0
 &&\quad\hbox{if $V\geqslant0$.}
\end{aligned}
\]
In particular, every accumulation point is bounded when
$c(G)=c(H)$.
\end{cor}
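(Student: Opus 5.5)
Nonemptiness of $\omegasetG$ follows at once from the compactness results already proved. When $V\leqslant0$, Proposition~\ref{prop:compact} shows that $\{u_G^\lambda:0<\lambda\leqslant1\}$ is precompact for local uniform convergence. When $V\geqslant0$, Proposition~\ref{prop:pos-bdd} gives the same conclusion. In both cases, a diagonal extraction yields $\lambda_n\to0^+$ with $u_G^{\lambda_n}\ucv u$. The common Lipschitz bound passes to the limit, so every accumulation point $u$ is Lipschitz.

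To show that $u$ solves \eqref{eq:criticalG}, I would write $u_G^{\lambda_n}$ as a solution of $G(x,Du)=c(G)-\lambda_n u_G^{\lambda_n}(x)$. In both sign regimes $\lambda_n u_G^{\lambda_n}\ucv0$ (Propositions~\ref{prop:compact} and~\ref{prop:pos-bdd}). The Hamiltonians $G(x,p)+\lambda_n u_G^{\lambda_n}(x)$ therefore converge locally uniformly to $G$, and standard viscosity stability gives that $u$ is a solution of \eqref{eq:criticalG}. The lower bound $u\geqslant v_G$ is obtained by passing to the limit in \eqref{eq:discount-lower}.

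For the comparison with $u_H^0$, I would pass to the limit in the existing comparison inequalities along the same sequence $\lambda_n$. If $V\leqslant0$ and $c(G)=c(H)$, the right-hand inequality in \eqref{eq:comparison} reads $u_G^{\lambda}\leqslant u_H^{\lambda}$. Together with the uniform convergence \eqref{eq:periodic-lim}, this gives $u\leqslant u_H^0$. If $V\geqslant0$, the left-hand inequality in \eqref{eq:pos-compare} gives $u_H^{\lambda}\leqslant u_G^{\lambda}$, and hence $u\geqslant u_H^0$ in the limit.

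Boundedness when $c(G)=c(H)$ follows from the same bounds. For $V\leqslant0$, one has $v_G\leqslant u\leqslant u_H^0$, and both bounding functions are bounded. For $V\geqslant0$, Proposition~\ref{prop:pos-bdd} gives global uniform bounds on the $u_G^\lambda$, and these pass to the limit. A potential of constant sign with $V\geqslant0$ always has $c(G)=c(H)$ by Proposition~\ref{prop:corder}, so these two cases cover the claim.

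There is no genuine obstacle here. The one point requiring care is the stability step: the limit equation is only reached because $\lambda u_G^\lambda\to0$ locally uniformly. For $V\leqslant0$ with $c(G)>c(H)$, this convergence is not uniform on all of $\MM$, but local uniformity suffices for viscosity stability, since that notion is local.
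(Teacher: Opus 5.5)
Your proposal is correct and follows the paper's proof essentially step by step. You take precompactness and $\lambda u_G^\lambda\ucv0$ from Propositions~\ref{prop:compact} and~\ref{prop:pos-bdd}, apply viscosity stability, and pass to the limit in \eqref{eq:discount-lower}, \eqref{eq:comparison} and \eqref{eq:pos-compare} using \eqref{eq:periodic-lim}. The only cosmetic difference is that for $V\leqslant0$ with $c(G)=c(H)$ you get boundedness from $v_G\leqslant u\leqslant u_H^0$, whereas the paper cites the global bound of Proposition~\ref{prop:compact}; that bound is derived from the same comparison.
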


\begin{proof}
Propositions~\ref{prop:compact} and \ref{prop:pos-bdd} give local
precompactness and $\lambda u_G^\lambda\ucv0$ for either sign of $V$.
Thus $\omegasetG$ is nonempty, and viscosity stability shows that
each of its elements solves \eqref{eq:criticalG}.
The common Lipschitz bound passes to the limit, as does the
lower bound in \eqref{eq:discount-lower}.
The comparisons with $u_H^0$ follow by passing to the limit in
\eqref{eq:comparison} or \eqref{eq:pos-compare}, using
\eqref{eq:periodic-lim}. Finally, when $c(G)=c(H)$, both
propositions give global uniform bounds, which pass to every
accumulation point.
\end{proof}

\section{Discounted measures and loss of mass}\label{sec:measures}

\subsection{Discounted measures and integral inequalities}

Fix $x\in\MM$. For every $\lambda>0$, choose a calibrated curve
$\gamma_x^\lambda$ for $u_G^\lambda$ as in
Proposition~\ref{prop:calibrated}. We define a probability measure
$\tilde\mu_x^\lambda$ on $\TM$ by setting
\begin{equation}\label{eq:disc-measure}
 \int_{\TM}f(y,q)\,d\tilde\mu_x^\lambda(y,q)
 :=\lambda\int_{-\infty}^0\e^{\lambda s}
 f\big(\gamma_x^\lambda(s),\dot\gamma_x^\lambda(s)\big)\,ds
\end{equation}
for every $f\in\CC_b(\TM)$. Its projection onto $\MM$ is denoted by
$\mu_x^\lambda=\pi_{1\#}\tilde\mu_x^\lambda$. By the uniform speed
bound, all these measures are supported in
$\MM\times\overline B_{\kappa_*}$. This set is not compact; only the
velocity variable is uniformly bounded.

The speed bound also gives, for each fixed $\lambda>0$,
\begin{equation}\label{eq:disc-spatial-moment}
 \int_{\MM}|y-x|\,d\mu_x^\lambda(y)
 \leqslant\lambda\kappa_*\int_{-\infty}^0
             \e^{\lambda s}|s|\,ds
 =\frac{\kappa_*}{\lambda}.
\end{equation}
In particular, every Lipschitz function is integrable against
$\mu_x^\lambda$.
Integration by parts gives, for every
$\varphi\in\CC^1(\MM)\cap\Lip(\MM)$,
\begin{equation}\label{eq:closeddefect}
 \int_{\TM}\langle D\varphi(y),q\rangle\,d\tilde\mu_x^\lambda(y,q)
 =\lambda\left(\varphi(x)-\int_{\MM}\varphi(y)\,d\mu_x^\lambda(y)\right).
\end{equation}
Indeed, $\varphi(\gamma_x^\lambda(s))$ grows at most linearly
as $s\to-\infty$, so the boundary term vanishes.
For bounded $\varphi$, the absolute value of the left-hand side
is at most $2\lambda\|\varphi\|_\infty$.
The minimizing property also yields
\begin{equation}\label{eq:disc-action}
 \int_{\TM}\big(L_G+c(G)\big)\,d\tilde\mu_x^\lambda
 =\lambda u_G^\lambda(x).
\end{equation}

The following two inequalities require no sign assumption on $V$.

\begin{prop}\label{prop:integrals}
Let $\lambda>0$ and $x\in\MM$.
\begin{itemize}
\item[(i)] For every critical subsolution $w$ of $G$, possibly unbounded,
\begin{equation}\label{eq:subsol-int}
 u_G^\lambda(x)\geqslant w(x)
               -\int_{\MM}w(y)\,d\mu_x^\lambda(y).
\end{equation}
\item[(ii)] For every $\mu\in\Mis(G)$,
\begin{equation}\label{eq:disc-norm}
 \int_{\MM}u_G^\lambda(y)\,d\mu(y)\leqslant0.
\end{equation}
\end{itemize}
\end{prop}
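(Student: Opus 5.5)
For (i), I will integrate the domination inequality of Proposition~\ref{prop:metric}(i) against the exponential weight along the calibrated curve. Put $\gamma=\gamma_x^\lambda$. For every $s<0$, the domination inequality on $[s,0]$ gives
\[
 w(x)-w(\gamma(s))\leqslant\int_s^0\big(L_G(\gamma(\tau),\dot\gamma(\tau))+c(G)\big)\,d\tau .
\]
Next, I multiply by $\lambda\e^{\lambda s}$ and integrate over $s\in(-\infty,0]$. On the left this produces $w(x)-\int w\,d\mu_x^\lambda$. This is finite because $w$ is Lipschitz (critical subsolutions are $\kappa_{c(G)}(G)$--Lipschitz), together with \eqref{eq:disc-spatial-moment}. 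On the right, Fubini gives $\int_{-\infty}^0\e^{\lambda\tau}(L_G+c(G))\,d\tau$, which equals $u_G^\lambda(x)$ by the calibration identity of Proposition~\ref{prop:calibrated} after letting $t\to+\infty$, since $\e^{-\lambda t}u_G^\lambda(\gamma(-t))\to0$ by boundedness of $u_G^\lambda$. To justify Fubini, note that $L_G+c(G)$ is bounded along $\gamma$ because of the speed bound $\kappa_*$, and $\int_{-\infty}^0\lambda\e^{\lambda s}|s|\,ds<\infty$. This yields \eqref{eq:subsol-int}.

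For (ii), I will follow the strategy of \cite{DFIZ1}: test the discounted equation against a Mather measure. The steps are as follows. Since $u_G^\lambda$ is bounded, Lipschitz, and solves $\lambda u+G(x,Du)=c(G)$, it satisfies $G(x,Du_G^\lambda)\leqslant c(G)-\lambda u_G^\lambda(x)$ almost everywhere. The right-hand side $a(x):=c(G)-\lambda u_G^\lambda(x)$ is bounded and uniformly continuous. Lemma~\ref{lem:smoothing} then provides smooth bounded Lipschitz $w_\eps$ with $\|w_\eps-u_G^\lambda\|_\infty\leqslant\eps$ and $G(x,Dw_\eps)\leqslant c(G)-\lambda u_G^\lambda(x)+\eps$ pointwise. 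Take $\tilde\mu\in\tilde\Mis(G)$ with projection $\mu$. By Fenchel's inequality,
\[
 \langle Dw_\eps(y),q\rangle\leqslant L_G(y,q)+c(G)-\lambda u_G^\lambda(y)+\eps .
\]
Integrating against $\tilde\mu$, the left side vanishes by closedness ($w_\eps\in\CC_b^1\cap\Lip$), while $\int(L_G+c(G))\,d\tilde\mu=0$ by \eqref{eq:MatherG}. Hence $\lambda\int u_G^\lambda\,d\mu\leqslant\eps$. Letting $\eps\to0$ and dividing by $\lambda>0$ gives \eqref{eq:disc-norm}. Integrability is not an issue here, because $u_G^\lambda$ is bounded and $L_G$ is bounded below and integrable against $\tilde\mu$ by \eqref{eq:MatherG}.

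The only genuinely delicate point is in (i): $w$ is possibly unbounded and $\mu_x^\lambda$ is not compactly supported. The Fubini exchange and the finiteness of $\int w\,d\mu_x^\lambda$ both rest on the linear growth of $w$ combined with the first-moment bound \eqref{eq:disc-spatial-moment}, so I would check these carefully. I expect everything else to be routine.
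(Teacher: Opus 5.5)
Your proof is correct. Part (ii) is the paper's argument: the paper applies Lemma~\ref{lem:action} to $u_G^\lambda$ with right-hand side $a(y)=c(G)-\lambda u_G^\lambda(y)$, and you have simply unfolded the proof of that lemma.

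Part (i) takes a slightly different route. The paper smooths $w$ via Lemma~\ref{lem:smoothing} and integrates the pointwise Fenchel inequality $L_G+c(G)\geqslant\langle Dw_\eps,q\rangle-\eps$ against $\tilde\mu_x^\lambda$. It then uses the integration-by-parts identity \eqref{eq:closeddefect} together with \eqref{eq:disc-action}, and lets $\eps\to0$. You instead apply the pathwise domination inequality of Proposition~\ref{prop:metric}(i) on each $[s,0]$ and average in $s$ using Fubini.

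The two computations are equivalent. Your Fubini exchange is precisely the integration by parts behind \eqref{eq:closeddefect}, and the smoothing is absorbed into the proof of domination.

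Each version has a small advantage. Yours works directly with the nonsmooth, possibly unbounded $w$ and needs no $\eps$-limit. The paper's reuses \eqref{eq:closeddefect} and \eqref{eq:disc-action}, which it needs anyway for Lemma~\ref{lem:discounted-decomp}. In both, the growth control comes from the Lipschitz bound on $w$ combined with \eqref{eq:disc-spatial-moment}, as you identified.
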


\begin{proof}
To prove (i), fix $\eps>0$ and let $w_\eps$ be given by
Lemma~\ref{lem:smoothing}. Fenchel's inequality gives
\[
 L_G(y,q)+c(G)\geqslant
 \langle Dw_\eps(y),q\rangle-\eps.
\]
Integrating against $\tilde\mu_x^\lambda$ and using
\eqref{eq:closeddefect} and \eqref{eq:disc-action}, we obtain
\[
 \lambda u_G^\lambda(x)
 \geqslant\lambda\left(w_\eps(x)
       -\int_{\MM}w_\eps(y)\,d\mu_x^\lambda(y)\right)-\eps.
\]
The integral is finite by \eqref{eq:disc-spatial-moment}.
Divide by $\lambda$ and let $\eps\to0$, using
$\|w_\eps-w\|_\infty\leqslant\eps$ and the unit mass of
$\mu_x^\lambda$, to obtain \eqref{eq:subsol-int}.

For (ii), let $\tilde\mu\in\tilde\Mis(G)$ project to $\mu$.
Apply Lemma~\ref{lem:action} to $u_G^\lambda$ with the bounded,
uniformly continuous right-hand side $a(y):=c(G)-\lambda u_G^\lambda(y)$.
Using \eqref{eq:MatherG}, we obtain
\[
 0\leqslant\int_{\TM}
       \bigl(L_G(y,q)+c(G)-\lambda u_G^\lambda(y)\bigr)\,d\tilde\mu(y,q)
 =-\lambda\int_{\MM}u_G^\lambda\,d\mu.
\]
This proves \eqref{eq:disc-norm}.
\end{proof}

The common lower bound in \eqref{eq:discount-lower} allows us to
pass to the limit in \eqref{eq:disc-norm} by Fatou's lemma.
This argument requires no sign assumption on $V$ and applies
whenever an accumulation point exists.

\begin{prop}\label{prop:cluster-norm}
Every $u\in\omegasetG$ is integrable against every $\mu\in\Mis(G)$
and satisfies
\begin{equation*}
 \int_{\MM}u(y)\,d\mu(y)\leqslant0.
\end{equation*}
\end{prop}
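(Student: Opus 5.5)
Fix $u\in\omegasetG$ and a sequence $\lambda_n\to0^+$ with $u_G^{\lambda_n}\ucv u$ in $\MM$. Fix $\mu\in\Mis(G)$, the projection of some $\tilde\mu\in\tilde\Mis(G)$. The plan is to pass to the limit in \eqref{eq:disc-norm} by Fatou's lemma. The common lower bound \eqref{eq:discount-lower} supplies the required integrable minorant.

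First, integrability. By \eqref{eq:discount-lower}, $u_G^{\lambda_n}\geqslant v_G\geqslant-\|v_G\|_\infty$ on $\MM$ for every $n$. Passing to the pointwise limit gives $u\geqslant-\|v_G\|_\infty$. Since $\mu$ is a probability measure, the negative part of $u$ is therefore $\mu$--integrable, and $\int_{\MM}u\,d\mu$ is well defined in $(-\infty,+\infty]$. It remains to show that this integral is at most $0$, which in particular proves that $u$ is integrable.

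Next, we apply Fatou's lemma to the nonnegative functions $f_n:=u_G^{\lambda_n}+\|v_G\|_\infty$. Local uniform convergence gives $f_n\to u+\|v_G\|_\infty$ pointwise on $\MM$. Hence
\[
 \int_{\MM}\bigl(u+\|v_G\|_\infty\bigr)\,d\mu
 \leqslant\liminf_{n\to\infty}\int_{\MM}\bigl(u_G^{\lambda_n}+\|v_G\|_\infty\bigr)\,d\mu
 \leqslant\|v_G\|_\infty,
\]
where the last inequality is \eqref{eq:disc-norm} from Proposition~\ref{prop:integrals}(ii), applied for each $\lambda_n$. For fixed $\lambda_n$ the function $u_G^{\lambda_n}$ is bounded, so every integral on the right is finite. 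Subtracting the finite constant $\|v_G\|_\infty$ yields $\int_{\MM}u\,d\mu\leqslant0$ and the finiteness of the integral.

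There is no real obstacle. The only point needing care is that $\mu$ may have noncompact support while $u$ may be unbounded above (when $c(G)>c(H)$ and $V\leqslant0$). For this reason dominated convergence is not available, and the argument has to rely on the one-sided bound $v_G\leqslant u_G^\lambda$, which holds without any sign assumption on $V$, together with Fatou's lemma.
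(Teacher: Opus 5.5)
Your proof is correct and matches the paper's argument: Fatou's lemma applied to $u_G^{\lambda_n}+\|v_G\|_\infty$, using the common lower bound \eqref{eq:discount-lower} and the discounted constraint \eqref{eq:disc-norm}. Integrability then follows from $u\geqslant-\|v_G\|_\infty$ together with the finite upper bound this produces.
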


\begin{proof}
Choose $\lambda_n\to0^+$ such that $u_G^{\lambda_n}\ucv u$.
By \eqref{eq:discount-lower}, the functions $u_G^{\lambda_n}$
have the common constant lower bound $-\|v_G\|_\infty$.
Fix $\mu\in\Mis(G)$. Applying Fatou's lemma to
$u_G^{\lambda_n}+\|v_G\|_\infty$ and using \eqref{eq:disc-norm}
gives
\[
 \int_{\MM}u\,d\mu
 \leqslant\liminf_{n\to\infty}\int_{\MM}u_G^{\lambda_n}\,d\mu
 \leqslant0.
\]
Since $u\geqslant-\|v_G\|_\infty$ and $\mu$ is a probability
measure, this also proves that $u$ is integrable against $\mu$.
\end{proof}

\subsection{The limiting decomposition}\label{subsec:decomp}

We now compare limits of discounted measures before and after applying
$\tilde\pi:\TM\to\TTorus$. We denote vague convergence of measures
by $\weakst$ and narrow convergence by $\weakcv$, meaning convergence
against compactly supported and bounded continuous functions,
respectively. These conventions apply on either space.
Vague limits are allowed to have mass smaller than one.

The next proposition is stated for general measures because the same
argument will also apply to measures constructed from critical calibrated
curves. Neither a sign condition on $V$ nor a tightness assumption is
needed in its statement.

\begin{prop}\label{prop:decomp}
Let $(\tilde\mu_n)_n$ be probability measures on $\TM$, supported in
$\MM\times\overline B_K$ for some $K>0$, and assume that
\begin{equation}\label{eq:asym-closed}
 \int_{\TM}\langle D\varphi(y),q\rangle\,d\tilde\mu_n(y,q)
 \longrightarrow0
 \quad\hbox{for every $\varphi\in\CC_b^1(\MM)\cap\Lip(\MM)$,}
\end{equation}
and
\begin{equation}\label{eq:asym-action}
 \int_{\TM}\big(L_G+c(G)\big)\,d\tilde\mu_n\longrightarrow0.
\end{equation}
After passing to a subsequence, there are a nonnegative finite measure
$\tilde\mu$ on $\TM$ and a probability measure $\tilde\nu$ on
$\TTorus$ such that
\begin{equation}\label{eq:mu-limits}
 \tilde\mu_n\weakst\tilde\mu\quad\hbox{vaguely in $\TM$,}
 \qquad
 \tilde\pi_\#\tilde\mu_n\weakcv\tilde\nu
 \quad\hbox{narrowly on $\TTorus$.}
\end{equation}
The measure $\tilde\mu$ is closed, and
\begin{equation*}
 \tilde\nu_\infty:=\tilde\nu-\tilde\pi_\#\tilde\mu
\end{equation*}
is a nonnegative closed measure of mass $1-\tilde\mu(\TM)$.
Moreover, the following action identity holds:
\begin{equation}\label{eq:act-balance}
\begin{split}
 0={}&\int_{\TM}\big(L_G+c(G)\big)\,d\tilde\mu
       +\int_{\TTorus}\big(L_H+c(H)\big)\,d\tilde\nu_\infty\\
    &+\big(c(G)-c(H)\big)\tilde\nu_\infty(\TTorus).
\end{split}
\end{equation}
Its three terms are nonnegative and vanish separately. In particular,
\begin{equation}\label{eq:norm-limits}
\begin{aligned}
 \frac{\tilde\mu}{\tilde\mu(\TM)}&\in\tilde\Mis(G)
 &&\hbox{if $\tilde\mu(\TM)>0$,}\\[4pt]
 \frac{\tilde\nu_\infty}{\tilde\nu_\infty(\TTorus)}
 &\in\tilde\Mis(H)
 &&\hbox{if $\tilde\nu_\infty(\TTorus)>0$.}
\end{aligned}
\end{equation}
If $c(G)>c(H)$, then $\tilde\nu_\infty=0$ and
$\tilde\mu_n\weakcv\tilde\mu\in\tilde\Mis(G)$ narrowly in $\TM$.
\end{prop}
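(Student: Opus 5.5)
The plan has three stages: extract the two limits by compactness, prove closedness and nonnegativity of the pieces, and then split the action into a compact part and a part at infinity to get the balance identity.

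\textbf{Extraction.} All $\tilde\mu_n$ are probability measures supported in $\MM\times\overline B_K$. By vague compactness, a subsequence converges vaguely to some $\tilde\mu$ with mass at most one. The projections $\tilde\pi_\#\tilde\mu_n$ live on the compact set $\TT^d\times\overline B_K$, so Prokhorov gives a narrow limit $\tilde\nu$, which is again a probability measure.

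\textbf{Nonnegativity of $\tilde\nu_\infty$.} Take $f\geqslant0$ in $\CC(\TTorus)$ and a cutoff $\chi_R\in\CC_c(\MM)$ with $0\leqslant\chi_R\leqslant1$ and $\chi_R=1$ on $B_R$. Since $(f\circ\tilde\pi)\chi_R$ has compact support in the relevant set,
\[
\int (f\circ\tilde\pi)\chi_R\,d\tilde\mu=\lim_n\int (f\circ\tilde\pi)\chi_R\,d\tilde\mu_n\leqslant\lim_n\int f\,d\tilde\pi_\#\tilde\mu_n=\int f\,d\tilde\nu .
\]
Letting $R\to\infty$ gives $\tilde\pi_\#\tilde\mu\leqslant\tilde\nu$. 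The mass statement $\tilde\nu_\infty(\TTorus)=1-\tilde\mu(\TM)$ is then immediate.

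\textbf{Closedness.} Velocities are bounded, so the first-moment condition holds for both limits.
\begin{itemize}
\item[(a)] For $\tilde\mu$, test with $\varphi\in\CC^1_c(\MM)$. The integrand $\langle D\varphi(y),q\rangle$ is compactly supported on $\MM\times\overline B_K$, so the vague limit of \eqref{eq:asym-closed} gives zero. Proposition~\ref{prop:closed-tests} upgrades this to closedness.
\item[(b)] For $\tilde\nu$, test with periodic $\varphi\in\CC^1(\TT^d)$. Its lift lies in $\CC^1_b\cap\Lip$, so \eqref{eq:asym-closed} together with narrow convergence gives $\int\langle D\varphi,q\rangle\,d\tilde\nu=0$.
\item[(c)] The measure $\tilde\pi_\#\tilde\mu$ is closed, as noted before the statement.
\end{itemize}
Hence the difference $\tilde\nu_\infty$ is closed.

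\textbf{Action identity.} Fix $r$ with $\supp V\subset B_r$ and a cutoff $\chi_R$ with $R>r$. Write
\[
L_G+c(G)=\chi_R\,(L_G+c(G))+(1-\chi_R)\big(L_H\circ\tilde\pi+c(H)\big)+(1-\chi_R)\big(c(G)-c(H)\big),
\]
using $L_G=L_H$ outside $B_r$ and the periodicity of $L_H$.

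On $\MM\times\overline B_K$ the integrands are bounded and continuous; the first term has compact support. Passing to the limit in \eqref{eq:asym-action} term by term, the first term converges to $\int\chi_R(L_G+c(G))\,d\tilde\mu$. The last two terms are handled as $\int g\,d\tilde\pi_\#\tilde\mu_n-\int \chi_R\,(g\circ\tilde\pi)\,d\tilde\mu_n$, with $g$ continuous on $\TT^d\times\overline B_K$. This limit is $\int g\,d\tilde\nu-\int\chi_R\,(g\circ\tilde\pi)\,d\tilde\mu$.

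Now let $R\to\infty$. Monotone or dominated convergence applies, since everything is bounded and $\tilde\mu$ is finite. The limit is exactly \eqref{eq:act-balance}, with $\int g\,d(\tilde\nu-\tilde\pi_\#\tilde\mu)=\int g\,d\tilde\nu_\infty$. I expect this exchange of limits to be the main technical point, but bounded velocities make it routine.

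\textbf{Sign of the three terms.}
\begin{itemize}
\item[(a)] The first term is nonnegative by Lemma~\ref{lem:action}, applied to the bounded subsolution $v_G$ of Proposition~\ref{prop:bdd-subsol} with constant $a=c(G)$.
\item[(b)] The second term is nonnegative by Lemma~\ref{lem:action} on the torus, applied to a periodic critical solution of $H$.
\item[(c)] The third term is nonnegative by Proposition~\ref{prop:corder}.
\end{itemize}
Three nonnegative terms summing to zero must each vanish.

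\textbf{Consequences.} Normalizing gives \eqref{eq:norm-limits}, since $\int L_G\,d\tilde\mu/\tilde\mu(\TM)=-c(G)$ and similarly for $H$; closedness was shown above. If $c(G)>c(H)$, vanishing of the third term forces $\tilde\nu_\infty=0$, so $\tilde\mu$ has mass one. Vague convergence of probability measures to a probability measure is narrow, which gives the final statement.
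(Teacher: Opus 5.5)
Your proof is correct and follows the paper's argument essentially step by step: the same extraction, the same cutoff argument for $\tilde\pi_\#\tilde\mu\leqslant\tilde\nu$, the same closedness tests, and the same use of Lemma~\ref{lem:action} and Proposition~\ref{prop:corder} for the signs. The only difference is cosmetic and lies in the action identity. The paper writes $L_G+c(G)=(L_H+c(G))+V$ directly and passes to the limit using narrow convergence on the torus for the first part and vague convergence for the compactly supported $V$ term. Your split reduces to exactly this, because for $R>r$ your $\chi_R$-terms combine to $\int V\,d(\pi_{1\#}\tilde\mu)$, so the limit $R\to\infty$ you flagged as the main technical point is not actually needed.
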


\begin{proof}
The extraction in \eqref{eq:mu-limits} follows from weak-* compactness
of measures of bounded mass and from compactness of
$\TT^d\times\overline B_K$. The first limit has mass at most one, and
both limits have velocities in $\overline B_K$.

Note also that, if $\tilde\mu_n\weakst\tilde\mu$ vaguely in $\TM$, then
$\pi_{1\#}\tilde\mu_n\weakst\pi_{1\#}\tilde\mu$ vaguely in $\MM$,
thanks to the common velocity bound.
Indeed, choose $\vartheta\in\CC_c(\R^d)$ equal to one on
$\overline B_K$. For every $f\in\CC_c(\MM)$, the function
$(x,q)\mapsto f(x)\vartheta(q)$ has compact support in $\TM$,
and
\[
\begin{aligned}
 \int_{\MM}f\,d(\pi_{1\#}\tilde\mu_n)
 &=\int_{\TM}f(x)\vartheta(q)\,d\tilde\mu_n(x,q)\\
 &\xrightarrow[n\to+\infty]{}
 \int_{\TM}f(x)\vartheta(q)\,d\tilde\mu(x,q)
 =\int_{\MM}f\,d(\pi_{1\#}\tilde\mu).
\end{aligned}
\]

Let $\chi_R\in\CC_c^\infty(\MM)$ satisfy
\[
 0\leqslant\chi_R\leqslant1,\qquad
 \chi_R=1\hbox{ on }B_R,\qquad
 \chi_R=0\hbox{ outside }B_{2R}.
\]
These functions increase pointwise to one
as $R\to+\infty$ along $R\in\N$.
If $f\in\CC_c(\TTorus)$ is nonnegative, vague convergence gives
\begin{align*}
 \int_{\TM}\chi_R(y)f(\tilde\pi(y,q))\,d\tilde\mu
 &=\lim_n\int_{\TM}\chi_R(y)f(\tilde\pi(y,q))\,d\tilde\mu_n\\
 &\leqslant\lim_n\int_{\TTorus}f\,d(\tilde\pi_\#\tilde\mu_n)
 =\int_{\TTorus}f\,d\tilde\nu.
\end{align*}
Letting $R\to+\infty$ along $R\in\N$ and using the monotone
convergence theorem proves $\tilde\pi_\#\tilde\mu\leqslant\tilde\nu$.
Hence $\tilde\nu_\infty$ is nonnegative and has the stated mass.

For $\varphi\in\CC_c^1(\MM)$, passing to the vague limit in
\eqref{eq:asym-closed}, using the common velocity bound, shows that
\[
 \int_{\TM}\langle D\varphi(y),q\rangle\,d\tilde\mu(y,q)=0.
\]
Since $\tilde\mu$ has bounded velocity support,
Proposition~\ref{prop:closed-tests} implies that it is closed.

Taking periodic tests in \eqref{eq:asym-closed} shows that
$\tilde\nu$ is closed. The measure $\tilde\pi_\#\tilde\mu$ is
closed as well, so their difference $\tilde\nu_\infty$ is closed.

It remains to identify the actions. First note that
\begin{align*}
 \int_{\TM}\bigl(L_G+c(G)\bigr)\,d\tilde\mu_n
 &=
 \int_{\TM}\bigl(L_H+c(G)\bigr)\,d\tilde\mu_n
 +\int_{\TM}V\,d\tilde\mu_n\\
 &=
 \int_{\TTorus}\bigl(L_H+c(G)\bigr)\,
       d(\tilde\pi_\#\tilde\mu_n)
 +\int_{\MM}V(y)\,d(\pi_{1\#}\tilde\mu_n)(y).
\end{align*}
Since $V$ is continuous and compactly supported, vague convergence
of the spatial projections gives convergence of the last integral.
The other integral converges by \eqref{eq:mu-limits}, since $L_H$
is continuous and the velocities lie in the common compact set
$\overline B_K$.
Passing to the limit as $n\to+\infty$ and using
\eqref{eq:asym-action}, we obtain
\begin{align*}
 0
 &=\int_{\TTorus}\bigl(L_H+c(G)\bigr)\,d\tilde\nu
       +\int_{\MM}V(y)\,d(\pi_{1\#}\tilde\mu)(y)\\
 &=\int_{\TM}\bigl(L_G+c(G)\bigr)\,d\tilde\mu
       +\int_{\TTorus}\bigl(L_H+c(H)\bigr)\,d\tilde\nu_\infty\\
 &\qquad+\bigl(c(G)-c(H)\bigr)\tilde\nu_\infty(\TTorus).
\end{align*}
This is precisely \eqref{eq:act-balance}. Both action integrals
are nonnegative by Lemma~\ref{lem:action}, applied to a critical
subsolution of $G$ and a periodic critical subsolution of $H$,
respectively. The last term is nonnegative by
Proposition~\ref{prop:corder}.
They must therefore all vanish, which proves
\eqref{eq:norm-limits}.

If $c(G)>c(H)$, we infer
$\tilde\nu_\infty(\TTorus)=0$, so $\tilde\mu(\TM)=1$.
Vague convergence of probability measures to a probability measure
implies narrow convergence. This proves the last assertion.
\end{proof}

\begin{oss}\label{rem:decomp}
The measure which minimizes the periodic action is
$\tilde\nu_\infty$, after normalization, rather than necessarily the
full limit $\tilde\nu$ on $\TTorus$. Indeed, the identity in the proof gives
\[
 \int_{\TTorus}\big(L_H+c(H)\big)\,d\tilde\nu
 =-\int_{\MM}V\,d(\pi_{1\#}\tilde\mu)-\big(c(G)-c(H)\big).
\]
In particular, even when $c(G)=c(H)$, a negative potential can leave a
positive periodic action in the part retained in the whole space.
\end{oss}

For discounted measures, the only additional condition needed
to apply the decomposition is the vanishing of their action in
\eqref{eq:disc-action}. The bounds in Section~\ref{sec:bounds}
ensure this for potentials of either constant sign.

\begin{lemma}\label{lem:discounted-decomp}
Fix $x\in\MM$ and a sequence $\lambda_n\to0^+$ such that
$\lambda_n u_G^{\lambda_n}(x)\to0$. Then the discounted measures
$(\tilde\mu_x^{\lambda_n})_n$ defined by \eqref{eq:disc-measure}
satisfy the hypotheses of Proposition~\ref{prop:decomp}.
In particular, this holds for every sequence $\lambda_n\to0^+$
when $V$ has constant sign, and along any sequence for which
$u_G^{\lambda_n}$ converges locally uniformly.
\end{lemma}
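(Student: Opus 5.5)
The plan is to check the three hypotheses of Proposition~\ref{prop:decomp} directly, using the identities already derived for the discounted measures, and then to treat the two special cases.

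\emph{Support and probability.} Each $\tilde\mu_x^{\lambda_n}$ is a probability measure, since $\lambda\int_{-\infty}^0\e^{\lambda s}\,ds=1$. Proposition~\ref{prop:calibrated} gives a speed bound $\kappa_*$ independent of $\lambda$ and $x$, so every $\tilde\mu_x^{\lambda_n}$ is supported in $\MM\times\overline B_{\kappa_*}$. We therefore take $K=\kappa_*$.

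\emph{Asymptotic closedness.} Let $\varphi\in\CC_b^1(\MM)\cap\Lip(\MM)$. By \eqref{eq:closeddefect}, together with the remark following it, the integral $\int\langle D\varphi,q\rangle\,d\tilde\mu_x^{\lambda_n}$ is bounded in absolute value by $2\lambda_n\|\varphi\|_\infty$. This tends to zero, which gives \eqref{eq:asym-closed}.

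\emph{Vanishing action.} By \eqref{eq:disc-action}, the action $\int(L_G+c(G))\,d\tilde\mu_x^{\lambda_n}$ equals $\lambda_n u_G^{\lambda_n}(x)$. This tends to zero by assumption, which gives \eqref{eq:asym-action}. There is no real obstacle at this step. The only point to watch is that \eqref{eq:disc-action} is an identity coming from calibration, and that $L_G+c(G)$ is bounded below, so the integral is well defined.

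\emph{Special cases.} It remains to show that $\lambda_n u_G^{\lambda_n}(x)\to0$ holds automatically in the two cases named in the statement.

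\begin{itemize}
\item[(a)] When $V\leqslant0$, Proposition~\ref{prop:compact} gives $\lambda u_G^\lambda\ucv0$.
\item[(b)] When $V\geqslant0$, Proposition~\ref{prop:pos-bdd} gives $\lambda u_G^\lambda\to0$ uniformly.
\item[(c)] If $u_G^{\lambda_n}\ucv u$ along the sequence, then $u_G^{\lambda_n}(x)$ converges to the finite value $u(x)$. Since $\lambda_n\to0$, it follows that $\lambda_n u_G^{\lambda_n}(x)\to0$.
\end{itemize}

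This completes the verification of the hypotheses.
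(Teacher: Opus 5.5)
Your proposal is correct and follows the paper's proof essentially step by step. The uniform speed bound gives the common velocity support, \eqref{eq:closeddefect} gives the $2\lambda_n\|\varphi\|_\infty$ bound for asymptotic closedness, and \eqref{eq:disc-action} identifies the action with $\lambda_n u_G^{\lambda_n}(x)$; the special cases then follow from Propositions~\ref{prop:compact} and \ref{prop:pos-bdd}, or from the boundedness of $u_G^{\lambda_n}(x)$ along a locally uniformly convergent sequence.
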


\begin{proof}
The measures $\tilde\mu_x^{\lambda_n}$ are probability measures.
By the uniform speed bound in Proposition~\ref{prop:calibrated},
their supports are contained in $\MM\times\overline B_{\kappa_*}$,
where $\kappa_*>0$ is independent of $n$.

For every $\varphi\in\CC_b^1(\MM)\cap\Lip(\MM)$,
identity \eqref{eq:closeddefect} gives
\[
 \left|\int_{\TM}\langle D\varphi(y),q\rangle\,
             d\tilde\mu_x^{\lambda_n}(y,q)\right|
 \leqslant2\lambda_n\|\varphi\|_\infty
 \longrightarrow0.
\]
Thus \eqref{eq:asym-closed} holds. Finally, by
\eqref{eq:disc-action},
\[
 \int_{\TM}\big(L_G+c(G)\big)\,d\tilde\mu_x^{\lambda_n}
 =\lambda_n u_G^{\lambda_n}(x)\longrightarrow0,
\]
where the convergence holds by assumption.
The last assertion follows from Propositions~\ref{prop:compact}
and \ref{prop:pos-bdd} when $V$ has constant sign; along a locally
uniformly convergent sequence, it follows directly from the
boundedness of $u_G^{\lambda_n}(x)$.
\end{proof}

\section{Selection for nonpositive potentials}\label{sec:selection}

Throughout this section we assume $V\leqslant0$.
Corollary~\ref{cor:clusters} and Proposition~\ref{prop:cluster-norm}
already give the existence of accumulation points and their
critical equation, lower bound and Mather constraints.
We now identify all these accumulation points with the same function.

Let $\sol_-$ be the family of bounded subsolutions $w$ of
\eqref{eq:criticalG} satisfying
\begin{equation}\tag{$\mathrm{Av}_G$}\label{eq:Mather-norm}
 \int_{\MM}w(y)\,d\mu(y)\leqslant0
 \qquad\hbox{for every $\mu\in\Mis(G)$,}
\end{equation}
with the additional requirement
\begin{equation}\label{eq:upper-bound}
 w\leqslant u_H^0\quad\hbox{in $\MM$}
 \qquad\hbox{whenever $c(G)=c(H)$.}
\end{equation}
When $c(G)>c(H)$, only \eqref{eq:Mather-norm} is imposed.
A condition indexed by an empty family of measures is understood
to be vacuous. 

We start by remarking that  $\sol_-$ is nonempty. Indeed, by subtracting a further
constant from the function $v_G$ in
Proposition~\ref{prop:bdd-subsol}, we can arrange that
$v_G\leqslant\min\{0,u_H^0\}$, so that $v_G\in\sol_-$.

We can now state the main result in its precise form.

\begin{teorema}\label{thm:main}
Assume that $V\leqslant0$. Then $\sol_-$ is nonempty and the function
\begin{equation}\tag{$\mathrm{Sel}_{-}$}\label{eq:selected}
 u_G^0(x):=\sup_{w\in\sol_-}w(x),\qquad x\in\MM,
\end{equation}
is finite everywhere. Furthermore,
\[
 u_G^\lambda\ucv u_G^0\quad\hbox{in $\MM$ as $\lambda\to0^+$.}
\]
The limit $u_G^0$ is a Lipschitz solution of \eqref{eq:criticalG},
bounded from below and satisfying the Mather constraints
\eqref{eq:Mather-norm}. If $c(G)=c(H)$, it also satisfies
\eqref{eq:upper-bound}; in particular, $u_G^0$ is bounded
and belongs to $\sol_-$.
\end{teorema}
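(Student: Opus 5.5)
The strategy follows \cite{DFIZ1}. By Corollary~\ref{cor:clusters}, $\omegasetG$ is nonempty. It then suffices to show that every $u\in\omegasetG$ equals $u_G^0$, which we do by proving the two inequalities $u\leqslant u_G^0$ and $u\geqslant w$ for every $w\in\sol_-$. Precompactness (Proposition~\ref{prop:compact}) then gives convergence of the whole family.

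\emph{First inequality.} Let $u\in\omegasetG$. By Corollary~\ref{cor:clusters} it is a Lipschitz critical solution with $u\geqslant v_G$, and it satisfies $u\leqslant u_H^0$ when $c(G)=c(H)$. By Proposition~\ref{prop:cluster-norm} it also satisfies \eqref{eq:Mather-norm}. When $c(G)=c(H)$, $u$ is therefore bounded and lies in $\sol_-$, so $u\leqslant u_G^0$.

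When $c(G)>c(H)$, $u$ may be unbounded above, since it is coercive. In that case we use truncations $w_k:=\min\{u,k\}$. Each $w_k$ is a bounded critical subsolution, because constants are subsolutions: $G(x,0)=H(x,0)-V\leqslant c(H)<c(G)$ requires care, so instead we take $\min\{u,u_H^0+k\}$, which is a subsolution since $c(H)\leqslant c(G)$. Moreover $w_k\leqslant u$, so \eqref{eq:Mather-norm} holds for $w_k$. Hence $w_k\in\sol_-$ and $w_k\leqslant u_G^0$; letting $k\to\infty$ gives $u\leqslant u_G^0$. Finiteness of $u_G^0$ follows from the second inequality below.

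\emph{Second inequality.} Fix $w\in\sol_-$, $x\in\MM$, and $\lambda_n\to0^+$ with $u_G^{\lambda_n}\ucv u$. By Proposition~\ref{prop:integrals}(i),
\[
u_G^{\lambda_n}(x)\geqslant w(x)-\int_{\MM}w\,d\mu_x^{\lambda_n}.
\]
By Lemma~\ref{lem:discounted-decomp} we may apply Proposition~\ref{prop:decomp} and extract limits $\tilde\mu$ and $\tilde\nu_\infty$. We must show $\limsup_n\int w\,d\mu_x^{\lambda_n}\leqslant0$. Since $w$ is bounded but not periodic, we split it. Fix $\eps>0$ and choose $R$ large with $\supp V\subset B_R$.

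On the retained part, vague convergence gives $\int\chi_R w\,d\mu_x^{\lambda_n}\to\int\chi_R w\,d\mu$. Normalizing and using \eqref{eq:Mather-norm}, the limit $R\to\infty$ is $\leqslant0$.

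On the escaping part, we use $w\leqslant u_H^0$ when $c(G)=c(H)$. The mass outside $B_{2R}$ is bounded by $\int(1-\chi_R)u_H^0\,d\mu_x^{\lambda_n}$. Because $u_H^0$ is periodic, this equals $\int u_H^0\,d\nu_n-\int\chi_R u_H^0\,d\mu_x^{\lambda_n}$, where $\nu_n$ is the projection of $\tilde\pi_\#\tilde\mu_x^{\lambda_n}$. This tends to $\int u_H^0\,d\nu-\int\chi_Ru_H^0\,d\mu$. As $R\to\infty$ it becomes $\int u_H^0\,d\nu_\infty$, which is $\leqslant0$ by \eqref{eq:period-norm} and \eqref{eq:norm-limits}.

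When $c(G)>c(H)$, Proposition~\ref{prop:decomp} gives narrow convergence and $\nu_\infty=0$, so this term vanishes. The bounded function $w$ then integrates directly: $\int w\,d\mu_x^{\lambda_n}\to\int w\,d\mu\leqslant0$. Combining the two parts yields $u(x)\geqslant w(x)$, hence $u\geqslant u_G^0$, and $u_G^0$ is finite.

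\emph{Main obstacle.} The hardest step is the exchange of limits in the split. The cutoff terms $\int(1-\chi_R)(\cdot)\,d\mu_x^{\lambda_n}$ must be handled carefully, using the boundedness of $w$ and $u_H^0$ and the fact that $\chi_R w$ and $\chi_R u_H^0$ are compactly supported. The key identity here is $\tilde\nu_\infty=\tilde\nu-\tilde\pi_\#\tilde\mu$.

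Once both inequalities hold, $u=u_G^0$ for every accumulation point, so $u_G^\lambda\ucv u_G^0$. The stated properties of $u_G^0$ (Lipschitz solution, bounded below, Mather constraints, and the upper bound \eqref{eq:upper-bound} when $c(G)=c(H)$) are inherited from Corollary~\ref{cor:clusters} and Proposition~\ref{prop:cluster-norm}.
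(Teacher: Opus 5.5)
Your proposal follows the paper's proof. The truncation argument for $u\leqslant u_G^0$ is Proposition~\ref{prop:upper-sel}. Your second inequality is Proposition~\ref{prop:lower-sel} together with Lemma~\ref{lem:limsup}, with your cutoffs $\chi_R$ making explicit the lower-semicontinuity step that the paper also proves with compact cutoffs. That splitting is correct as written. The ``exchange of limits'' you flag is harmless: for each fixed $R$ you obtain $\limsup_n\int w\,d\mu_x^{\lambda_n}\leqslant\int\chi_Rw\,d\mu+\int u_H^0\,d\nu-\int\chi_Ru_H^0\,d\mu$. The left-hand side does not depend on $R$, so you only then let $R\to\infty$.

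The one step that is wrong as justified is the positive-gap truncation. The function $u_H^0+k$ is not a critical subsolution of $G$. At differentiability points, $G(x,Du_H^0)=c(H)-V(x)$, which exceeds $c(G)$ wherever $-V(x)>c(G)-c(H)$. This does occur: take $H=\frac12p^2-W$ with $\min W=0$ and $W(x_0)=\max W=1$, and $V=-(1+W)\phi$ for a bump $0\leqslant\phi\leqslant1$ with $\phi(x_0)=1$. Then $c(H)=0$, $c(G)=1$, but $-V(x_0)=2$. Hence $\min\{u,u_H^0+k\}$ need not be a subsolution, and $c(H)\leqslant c(G)$ alone does not suffice.

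The minimum is a subsolution once $k>\max_N(u-u_H^0)$ for a compact neighborhood $N$ of $\supp(V)$. Then it equals $u$ near $\supp(V)$. On $\MM\setminus\supp(V)$, where $G=H$, both functions are subsolutions at level $c(G)$. This is the localization used in Proposition~\ref{prop:bdd-subsol}, and it suffices because you let $k\to\infty$. The paper sidesteps the issue by truncating with $v_G+n$, a bounded critical subsolution of $G$ itself. That works for every $n$ and in both regimes at once.

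Also delete the aside on constants, since $H(x,0)\leqslant c(H)$ fails in general. Finally, state explicitly that $\sol_-\neq\emptyset$. Your construction gives it, since $u$ or $w_k$ lies in $\sol_-$; the paper gets it from $v_G\leqslant\min\{0,u_H^0\}$.
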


The boundedness requirement in the definition of $\sol_-$ concerns
each candidate separately and does not imply boundedness of their
supremum. When $c(G)>c(H)$, the selected solution is in fact coercive;
see Proposition~\ref{prop:gap}, which also gives an equivalent
characterization using possibly unbounded subsolutions.

\subsection{Proof of the selection theorem}\label{subsec:proof}

We first use the Mather constraints already established in
Proposition~\ref{prop:cluster-norm} and a truncation argument
to show that every accumulation point lies below the supremum
in \eqref{eq:selected}, which at this stage may take the value
$+\infty$.
We then use the decomposition of discounted measures to prove
the reverse inequality and establish that $u_G^0$ is finite
everywhere and is a Lipschitz critical subsolution.

\begin{prop}\label{prop:upper-sel}
For every $u\in\omegasetG$, one has $u\leqslant u_G^0$ on $\MM$.
\end{prop}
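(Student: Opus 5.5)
Fix $u\in\omegasetG$. By Corollary~\ref{cor:clusters} and Proposition~\ref{prop:cluster-norm}, $u$ is a Lipschitz solution of \eqref{eq:criticalG}, satisfies $u\geqslant v_G$, and obeys \eqref{eq:Mather-norm}. Moreover, when $c(G)=c(H)$ it satisfies $u\leqslant u_H^0$ and is bounded. The plan is to exhibit, for each point $x_0$ and each $\eps>0$, an element $w\in\sol_-$ with $w(x_0)\geqslant u(x_0)-\eps$. Letting $\eps\to0$ then gives $u(x_0)\leqslant u_G^0(x_0)$.

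When $c(G)=c(H)$ there is nothing more to do: $u$ is a bounded critical subsolution satisfying \eqref{eq:Mather-norm} and \eqref{eq:upper-bound}, so $u\in\sol_-$ directly. The only issue is the case $c(G)>c(H)$, where $u$ may be unbounded (only bounded below, with linear growth) and so is not itself admissible. Here I would truncate: set $w_k:=\min\{u,k\}$ for $k\in\N$. Since constants are critical subsolutions (indeed $G(x,0)\leqslant c(G)$? not necessarily), I would instead truncate with a bounded critical subsolution, namely $w_k:=\min\{u,\,v_G+k\}$. Minima of two critical subsolutions are critical subsolutions, as recalled in the preliminaries. Because $v_G$ is bounded, $w_k$ is bounded: it lies between $v_G$ (using $u\geqslant v_G$ and $k\geqslant0$) and $k$.

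The Mather constraint passes to $w_k$ because $w_k\leqslant u$ pointwise. Thus for every $\mu\in\Mis(G)$ we get $\int w_k\,d\mu\leqslant\int u\,d\mu\leqslant0$, where integrability follows from the lower bound $w_k\geqslant v_G$ and the integrability of $u$ from Proposition~\ref{prop:cluster-norm}. Since \eqref{eq:upper-bound} is not required when $c(G)>c(H)$, we conclude $w_k\in\sol_-$. Finally, $w_k(x_0)=u(x_0)$ as soon as $k\geqslant u(x_0)-\inf v_G$, so $u(x_0)\leqslant u_G^0(x_0)$ with no $\eps$ needed.

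I expect no real obstacle. The one point to check carefully is that the truncation keeps the subsolution property globally. This requires using $v_G+k$ rather than a constant, since a constant need not be a critical subsolution when $V\leqslant0$ lowers $G$ only on a compact set. One should also confirm that Proposition~\ref{prop:cluster-norm} applies to unbounded accumulation points, which it does, via Fatou's lemma and the common lower bound.
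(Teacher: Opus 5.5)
Your proof is correct and is essentially the paper's argument. The paper also truncates with $w_n=\min\{u,v_G+n\}$ and uses $w_n\leqslant u$ to transfer both \eqref{eq:Mather-norm} and, when $c(G)=c(H)$, \eqref{eq:upper-bound}; it therefore needs no case split, and it concludes via $w_n\ucv u$ rather than via pointwise equality at $x_0$. One aside is backwards but unused: $V\leqslant0$ raises $G=H-V$ rather than lowering it.
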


\begin{proof}
Fix $u\in\omegasetG$. By Corollary~\ref{cor:clusters}, $u$ is a
critical solution with $u\geqslant v_G$, and
Proposition~\ref{prop:cluster-norm} gives its Mather constraints.
However, $u$ need not be bounded, and therefore need not
itself belong to $\sol_-$. For $n\in\N$, define instead
\begin{equation*}
 w_n(x):=\min\{u(x),v_G(x)+n\},\qquad x\in\MM.
\end{equation*}
Both functions in the minimum are critical subsolutions.
Their minimum is a subsolution by convexity, and
$v_G\leqslant w_n\leqslant v_G+n$, so $w_n$ is bounded.
Since $w_n\leqslant u$, it satisfies \eqref{eq:Mather-norm}.
If the critical values agree, then
$w_n\leqslant u\leqslant u_H^0$, and it also satisfies
\eqref{eq:upper-bound}. Hence $w_n\in\sol_-$ for every $n$.

Moreover, $w_n=u$ on every fixed compact set for all
sufficiently large $n$, so $w_n\ucv u$ in $\MM$.
It follows that $u\leqslant u_G^0$ on $\MM$.
\end{proof}

To prove the reverse inequality, we first establish the estimate
needed to pass to the limit in the discounted integrals.

\begin{lemma}\label{lem:limsup}
Assume $V\leqslant0$, fix $x\in\MM$, and let $\lambda_n\to0^+$
be such that the discounted measures have the limits
\eqref{eq:mu-limits}. Then, for every $w\in\sol_-$,
\begin{equation*}
 \limsup_{n\to\infty}
 \int_{\MM}w(y)\,d\mu_x^{\lambda_n}(y)\leqslant0.
\end{equation*}
\end{lemma}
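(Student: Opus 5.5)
We will split $\int w\,d\mu_x^{\lambda_n}$ into a retained part near the origin and an escaping part at infinity, and bound the two pieces by the two limits in \eqref{eq:mu-limits}. Fix $w\in\sol_-$; it is bounded and Lipschitz. Let $\tilde\mu$, $\tilde\nu$, $\tilde\nu_\infty$ be as in Proposition~\ref{prop:decomp}, which applies by Lemma~\ref{lem:discounted-decomp}. Write $m:=\tilde\mu(\TM)$ and $\mu:=\pi_{1\#}\tilde\mu$.

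Take the cutoffs $\chi_R$ from the proof of Proposition~\ref{prop:decomp} and decompose
\[
 \int_{\MM}w\,d\mu_x^{\lambda_n}
 =\int_{\MM}\chi_R w\,d\mu_x^{\lambda_n}
 +\int_{\MM}(1-\chi_R)w\,d\mu_x^{\lambda_n}.
\]
The first term converges to $\int\chi_R w\,d\mu$ by vague convergence of the spatial projections. As $R\to\infty$ this tends to $\int w\,d\mu$, by dominated convergence since $w$ is bounded. If $m>0$, then $\mu/m\in\Mis(G)$ by \eqref{eq:norm-limits}, so \eqref{eq:Mather-norm} gives $\int w\,d\mu\leqslant0$.

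For the second term, the case $c(G)>c(H)$ is easy. Then $m=1$ and the convergence is narrow, so the tail term tends to $\int(1-\chi_R)w\,d\mu$, which vanishes as $R\to\infty$.

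In the case $c(G)=c(H)$ we use \eqref{eq:upper-bound}: since $w\leqslant u_H^0$,
\[
 \int(1-\chi_R)w\,d\mu_x^{\lambda_n}
 \leqslant\int(1-\chi_R)u_H^0\,d\mu_x^{\lambda_n}
 =\int_{\TTorus}u_H^0\,d(\tilde\pi_\#\tilde\mu_n)
 -\int\chi_R u_H^0\,d\mu_x^{\lambda_n}.
\]
Here $u_H^0$ is periodic, so the middle integral is an integral against $\tilde\pi_\#\tilde\mu_n$, writing $\tilde\mu_n=\tilde\mu_x^{\lambda_n}$. As $n\to\infty$, narrow convergence sends it to $\int u_H^0\,d\nu$, where $\nu$ is the base projection of $\tilde\nu$, and the last integral tends to $\int\chi_R u_H^0\,d\mu$. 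Letting $R\to\infty$, the limsup of the tail term is at most
\[
\int u_H^0\,d\nu-\int_{\TT^d}u_H^0\,d(\tilde\pi_\#\mu)
=\int_{\TT^d}u_H^0\,d\nu_\infty ,
\]
where $\nu_\infty$ is the projection of $\tilde\nu_\infty$. If $\tilde\nu_\infty\neq0$, its normalization lies in $\tilde\Mis(H)$ by \eqref{eq:norm-limits}, so \eqref{eq:period-norm} makes this quantity nonpositive; if $\tilde\nu_\infty=0$ it vanishes.

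The main obstacle is the order of limits. The first-term limit is valid for each fixed $R$ only after $n\to\infty$, so I would take $\limsup_n$ for fixed $R$ and only then send $R\to\infty$. Since the left-hand side does not depend on $R$, this yields $\limsup_n\int w\,d\mu_x^{\lambda_n}\leqslant\int w\,d\mu+\int u_H^0\,d\nu_\infty\leqslant0$, which is the claim. The other point to check is that the pointwise bound $(1-\chi_R)w\leqslant(1-\chi_R)u_H^0$ is legitimate; it is, because $0\leqslant1-\chi_R$.
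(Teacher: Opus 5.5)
Your argument is correct and is essentially the paper's proof: both derive \eqref{eq:limsup-parts}, $\limsup_n\int w\,d\mu_x^{\lambda_n}\leqslant\int w\,d\mu+\int_{\TT^d}u_H^0\,d\nu_\infty$, from vague convergence on compact sets, the comparison $w\leqslant u_H^0$ and narrow convergence of the periodic integral, and then conclude with \eqref{eq:Mather-norm} and \eqref{eq:period-norm}. The paper packages your explicit $\chi_R$ cutoff as lower semicontinuity of $\int(u_H^0-w)\,d\mu_x^{\lambda_n}$ under vague convergence, and it splits cases according to whether $\tilde\nu_\infty$ vanishes rather than according to the critical-value regime.
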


\begin{proof}
We use the notation of Proposition~\ref{prop:decomp} and write
$\mu:=\pi_{1\#}\tilde\mu$, $\nu:=\pi_{1\#}\tilde\nu$ and
$\nu_\infty:=\pi_{1\#}\tilde\nu_\infty$ for the projections onto
the respective base manifolds.

If $\tilde\nu_\infty=0$, then $\tilde\mu$ is a Mather probability
measure and $\tilde\mu_x^{\lambda_n}\weakcv\tilde\mu$ narrowly
in $\TM$. Hence $\mu\in\Mis(G)$ and
$\mu_x^{\lambda_n}\weakcv\mu$ narrowly in $\MM$.
Boundedness and continuity of $w$ give
\[
 \lim_n\int_{\MM}w\,d\mu_x^{\lambda_n}
 =\int_{\MM}w(y)\,d\mu(y)\leqslant0.
\]

Suppose next that $\tilde\nu_\infty\ne0$.
The action identity forces $c(G)=c(H)$, so $w\leqslant u_H^0$.
The function $u_H^0-w$ is nonnegative and continuous.
By vague convergence, the uniform velocity bound and an
increasing sequence of compact spatial cutoffs, we have
\[
 \int_{\MM}(u_H^0-w)(y)\,d\mu(y)
 \leqslant\liminf_n
 \int_{\MM}(u_H^0-w)(y)\,d\mu_x^{\lambda_n}(y).
\]
On the other hand, periodicity of $u_H^0$ gives
\[
 \lim_n\int_{\MM}u_H^0\,d\mu_x^{\lambda_n}
 =\int_{\TT^d}u_H^0(y)\,d\nu(y).
\]
Subtracting the two relations yields
\begin{equation}\label{eq:limsup-parts}
\begin{split}
 \limsup_n\int_{\MM}w\,d\mu_x^{\lambda_n}
 \leqslant{}&\int_{\MM}w(y)\,d\mu(y)\\
            &+\int_{\TT^d}u_H^0(y)\,d\nu_\infty(y).
\end{split}
\end{equation}
If $\mu=0$, the first integral vanishes; otherwise, its
nonpositivity follows from \eqref{eq:Mather-norm} applied to
$\mu/\mu(\MM)$, which belongs to $\Mis(G)$
by \eqref{eq:norm-limits}.
The second integral is nonpositive as well by
\eqref{eq:period-norm} and \eqref{eq:norm-limits}.
This proves the assertion.
\end{proof}

We can now prove the reverse inequality and show that
$u_G^0$ is finite everywhere and Lipschitz.

\begin{prop}\label{prop:lower-sel}
For every $u\in\omegasetG$ and $w\in\sol_-$, one has
$u\geqslant w$ on $\MM$.
In particular, $\sol_-$ is locally uniformly bounded from above,
and the function $u_G^0$ in \eqref{eq:selected} is finite
everywhere and is a Lipschitz critical subsolution.
\end{prop}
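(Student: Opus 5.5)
Fix $u\in\omegasetG$, a sequence $\lambda_n\to0^+$ with $u_G^{\lambda_n}\ucv u$, a point $x\in\MM$ and $w\in\sol_-$. The plan is to apply the integral inequality \eqref{eq:subsol-int} of Proposition~\ref{prop:integrals}(i) with this $w$, which is a bounded critical subsolution:
\[
 u_G^{\lambda_n}(x)\geqslant w(x)-\int_{\MM}w(y)\,d\mu_x^{\lambda_n}(y).
\]
By Lemma~\ref{lem:discounted-decomp} (valid since $V$ has constant sign), the discounted measures $\tilde\mu_x^{\lambda_n}$ satisfy the hypotheses of Proposition~\ref{prop:decomp}. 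We therefore pass to a further subsequence along which the limits \eqref{eq:mu-limits} exist. Taking $\limsup$ along this subsequence and invoking Lemma~\ref{lem:limsup}, the last integral has nonpositive $\limsup$. Since $u_G^{\lambda_n}(x)\to u(x)$, this gives $u(x)\geqslant w(x)$. As $x$ and $w$ are arbitrary, $u\geqslant w$ on $\MM$.

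For the consequences, we use that $\omegasetG$ is nonempty by Corollary~\ref{cor:clusters}, and pick some $u\in\omegasetG$. The inequality just proved gives $w\leqslant u$ for all $w\in\sol_-$. Since $u$ is continuous, and in fact satisfies $u(x)\leqslant C+\kappa|x|$ by passing to the limit in \eqref{eq:localupper}, the family $\sol_-$ is locally uniformly bounded from above. Hence $u_G^0\leqslant u$ is finite everywhere. Moreover, $u_G^0\geqslant v_G$, because $v_G\in\sol_-$ after the normalization made before Theorem~\ref{thm:main}.

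Every element of $\sol_-$ is a critical subsolution, so it is $\kappa_{c(G)}(G)$-Lipschitz. Their pointwise supremum is therefore finite and equi-Lipschitz, and by \cite[Proposition~2.1]{DFIZ1} (recalled in the preliminaries) it is a critical subsolution. Lipschitz continuity is inherited with the same constant.

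The only nontrivial step is the passage to the limit in the integral term, and this is exactly what Lemma~\ref{lem:limsup} handles: it treats the escaping mass through $\tilde\nu_\infty$, the constraint $w\leqslant u_H^0$, and \eqref{eq:period-norm}. The remaining point to check is that the subsequence extraction is harmless, since $u_G^{\lambda_n}(x)$ converges along the whole original sequence.
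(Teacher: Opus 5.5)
Your proposal is correct and follows essentially the same route as the paper: Proposition~\ref{prop:integrals}\,(i) along a subsequence on which the discounted measures have the limits \eqref{eq:mu-limits}, then Lemma~\ref{lem:limsup}, and finally the nonemptiness of $\omegasetG$, the membership $v_G\in\sol_-$, the common Lipschitz constant and stability of subsolutions under suprema. The only cosmetic difference is how you get local boundedness of $\sol_-$: you derive it directly from $w\leqslant u$ with $u$ continuous, whereas the paper deduces it from the Lipschitz continuity of the finite function $u_G^0$.
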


\begin{proof}
Fix $u\in\omegasetG$ and choose a sequence $\lambda_n\to0^+$
such that $u_G^{\lambda_n}\ucv u$ in $\MM$.

Let $x\in\MM$. The discounted measures
$\tilde\mu_x^{\lambda_n}$ defined by \eqref{eq:disc-measure}
are probability measures supported in
$\MM\times\overline B_{\kappa_*}$, where $\kappa_*$ is the
uniform speed bound in Proposition~\ref{prop:calibrated}.
Their masses are therefore uniformly bounded, and their
projections $\tilde\pi_\#\tilde\mu_x^{\lambda_n}$ are
probability measures supported in the compact set
$\TT^d\times\overline B_{\kappa_*}$.
By weak-* compactness of measures of bounded mass and
compactness of probability measures on this latter set,
we may pass to a common subsequence, still indexed by $n$,
such that
\[
\begin{aligned}
 \tilde\mu_x^{\lambda_n}
 &\weakst\tilde\mu
 &&\quad\hbox{vaguely in $\TM$,}\\
 \tilde\pi_\#\tilde\mu_x^{\lambda_n}
 &\weakcv\tilde\nu
 &&\quad\hbox{narrowly on $\TTorus$.}
\end{aligned}
\]
Here $\tilde\mu$ is a nonnegative finite measure with
$\tilde\mu(\TM)\leqslant1$, and $\tilde\nu$ is a probability
measure. By Lemma~\ref{lem:discounted-decomp}, the sequence
$(\tilde\mu_x^{\lambda_n})_n$ satisfies the hypotheses of
Proposition~\ref{prop:decomp}, so the conclusions of that
proposition apply to these limits.

Now let $w\in\sol_-$. Since $w$ is a bounded critical
subsolution, Proposition~\ref{prop:integrals}\,(i) gives
\[
 w(x)\leqslant u_G^{\lambda_n}(x)
       +\int_{\MM}w(y)\,d\mu_x^{\lambda_n}(y).
\]
Using $u_G^{\lambda_n}(x)\to u(x)$ and
Lemma~\ref{lem:limsup}, we obtain
\[
 w(x)\leqslant u(x)
       +\limsup_{n\to\infty}
          \int_{\MM}w(y)\,d\mu_x^{\lambda_n}(y)
       \leqslant u(x).
\]
Since $x$ and $w$ were arbitrary, this proves $w\leqslant u$
on $\MM$ for every $w\in\sol_-$.

By Corollary~\ref{cor:clusters}, the set $\omegasetG$
is nonempty. Fixing any $u\in\omegasetG$ and recalling that
$v_G\in\sol_-$, we therefore have
\[
 v_G(x)\leqslant u_G^0(x)
 :=\sup_{w\in\sol_-}w(x)\leqslant u(x)
 \qquad\hbox{for every $x\in\MM$.}
\]
Uniform coercivity of $G$ gives a common Lipschitz constant
$\kappa$ for its critical subsolutions. Since $u_G^0$ is finite
everywhere, it is also $\kappa$--Lipschitz. In particular,
the family $\sol_-$ is locally uniformly bounded from above.

Finally, stability of viscosity subsolutions under suprema
shows that $u_G^0$ is a critical subsolution.
This proves the last assertion.
\end{proof}

\begin{proof}[Proof of Theorem~\ref{thm:main}]
The nonemptiness of $\sol_-$ was established above, while
Proposition~\ref{prop:lower-sel} shows that $u_G^0$ is finite
everywhere and Lipschitz. By Proposition~\ref{prop:compact},
the family of discounted solutions is locally precompact.
Propositions~\ref{prop:upper-sel} and \ref{prop:lower-sel}
show that every accumulation point equals $u_G^0$.
Hence $u_G^\lambda\ucv u_G^0$ as $\lambda\to0^+$.

Since $u_G^0\in\omegasetG$, Corollary~\ref{cor:clusters}
shows that it solves \eqref{eq:criticalG} and is bounded from
below by $v_G$. Proposition~\ref{prop:cluster-norm} gives its
integrability against every $\mu\in\Mis(G)$ and the constraints
\eqref{eq:Mather-norm}.

If $c(G)=c(H)$, the same corollary gives
$v_G\leqslant u_G^0\leqslant u_H^0$. Thus $u_G^0$ is bounded,
satisfies \eqref{eq:upper-bound} and belongs to $\sol_-$.
\end{proof}

\subsection{Further properties of the selected solution and Mather measures}
\label{subsec:conseq}

The proof above applies without separating the critical-value regimes.
We now record further properties of the selected solution and
Mather measures, distinguishing between the positive-gap case
$c(G)>c(H)$ and the zero-gap case $c(G)=c(H)$.

\begin{prop}\label{prop:gap}
Assume $V\leqslant0$ and $c(G)>c(H)$. Then:
\begin{itemize}
\item[(i)] $\tilde\Mis(G)\ne\emptyset$, and there is a compact set
$K\subset\MM$ such that $\supp(\mu)\subset K$ for every
$\mu\in\Mis(G)$;
\smallskip
\item[(ii)] $c_f(G)=c(G)$;
\smallskip
\item[(iii)] formula \eqref{eq:selected} remains valid if the boundedness
requirement on $w$ is omitted;

\item[(iv)] there exist $\eps>0$ and $C>0$ such that
\[
 u_G^0(x)\geqslant\eps|x|-C\qquad\hbox{for every $x\in\MM$.}
\]
\end{itemize}
\end{prop}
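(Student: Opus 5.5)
The plan rests on one construction: a critical subsolution of $G$ that is \emph{strict and coercive outside a compact set}. Write $\delta:=c(G)-c(H)>0$, choose $r$ with $\supp(V)\subset B_r$, and fix the bounded critical subsolution $v_G$ of Proposition~\ref{prop:bdd-subsol}.

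\textbf{Construction of $W$.} Since $H$ is uniformly continuous in $p$ on $\MM\times\overline B_R$, there is $\eps>0$ such that $u_H^0(x)+\eps|x|$ is a subsolution of
\[
 H(x,Du)=c(H)+\delta/2 .
\]
Outside $\supp(V)$ one has $G=H$, so this function is a subsolution of $G(x,Du)=c(G)-\delta/2$ there. Set
\[
 W:=\max\{v_G,\;u_H^0+\eps|\cdot|-k\}.
\]
Take $k$ so large that the second term is strictly below $v_G$ on a neighbourhood of $\overline B_r$; this is possible because $v_G$ and $u_H^0$ are bounded. Then $W=v_G$ near $\supp(V)$. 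Away from $\supp(V)$, $W$ is a maximum of two critical subsolutions of $G$, so it is a critical subsolution on all of $\MM$. Since $v_G$ is bounded, $W$ coincides with the second term outside some ball $B_R$. Hence
\[
 G(x,DW)\leqslant a(x):=c(G)-\tfrac{\delta}{2}\big(1-\chi(x)\big),
\]
where $\chi$ is a continuous cutoff equal to $1$ on $B_R$ and $0$ outside $B_{2R}$. Moreover $W(x)\geqslant\eps|x|-C$.

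\textbf{Items (i) and (ii).} For nonemptiness, fix $x$ and apply Lemma~\ref{lem:discounted-decomp} and Proposition~\ref{prop:decomp} to the discounted measures $\tilde\mu_x^{\lambda_n}$. Since $c(G)>c(H)$, that proposition gives $\tilde\nu_\infty=0$ and a narrow limit in $\tilde\Mis(G)$. For the support, take $\tilde\mu\in\tilde\Mis(G)$ and apply Lemma~\ref{lem:action} to $W$ with the bounded uniformly continuous right-hand side $a$ above. Using \eqref{eq:MatherG}, this yields
\[
 0\leqslant\int_{\TM}(L_G+a)\,d\tilde\mu=-\tfrac{\delta}{2}\int_{\MM}(1-\chi)\,d\mu .
\]
Hence $\mu$ vanishes outside $B_{2R}$, and $K:=\overline B_{2R}$ works for every $\mu\in\Mis(G)$. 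Item (ii) follows because $\tilde\Mis(G)\neq\emptyset$ forces $c_f(G)=c(G)$, as observed after \eqref{eq:MatherG}.

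\textbf{Item (iii).} Let $\sol_-'$ be the family obtained by dropping boundedness, i.e.\ all critical subsolutions satisfying \eqref{eq:Mather-norm}; note these are integrable against projected Mather measures, which have compact support by (i). Clearly $\sol_-\subset\sol_-'$. Conversely, for $w\in\sol_-'$, repeat the truncation of Proposition~\ref{prop:upper-sel}, but two-sided: set
\[
 w_n:=\max\{\min\{w,\,v_G+n\},\;v_G-n\}.
\]
This is a bounded critical subsolution. On $K$ it coincides with $w$ for large $n$, since $w$ is continuous and $K$ compact, so $w_n$ satisfies \eqref{eq:Mather-norm} for large $n$. It also converges to $w$ locally uniformly. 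Hence the two suprema coincide. This step uses the compact support from (i) essentially; I expect it to be routine.

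\textbf{Item (iv).} Normalize $\widehat W:=W-\max_K W$. By (i), $\int\widehat W\,d\mu\leqslant0$ for every $\mu\in\Mis(G)$, so $\widehat W\in\sol_-'$. By (iii) and Theorem~\ref{thm:main},
\[
 u_G^0\geqslant\widehat W\geqslant\eps|x|-C'.
\]

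\textbf{Main obstacle.} The delicate point is the construction of $W$. A convex combination of $v_G$ and $u_H^0$ fails on $\supp(V)$, where $u_H^0$ is not a $G$-subsolution at level $c(G)$. Using a maximum with the tilted periodic solution, with the shift $k$ chosen so that $v_G$ is the active term near the perturbation, avoids this. It gives strictness and coercivity at infinity simultaneously, and both (i) and (iv) rely on these.
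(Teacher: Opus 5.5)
Your proof is correct, and it differs from the paper's in how the key auxiliary subsolution is produced.

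The paper works with the critical action $S(x)=S^G_{c(G)}(0,x)$. It smooths $u_H^0$ into a periodic $\psi$ with $H(x,D\psi)\leqslant c(H)+\delta/2$ and derives the strict Fenchel bound $L_H+c(G)\geqslant\langle D\psi,q\rangle+\eps|q|$. Integrating this along curves split at their last exit from $\overline B_r$ gives $S(x)\geqslant\eps|x|-C_1$. It then reuses the gluing $\min\{S,u_H^0+k\}$ of Proposition~\ref{prop:bdd-subsol}, which equals $u_H^0+k$ outside a ball and is therefore strict there; this localizes the Mather measures in (i). Item (iv) comes from the unbounded coercive subsolution $S-M_K$ through (iii).

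You use the same $\eps$-margin pointwise rather than along curves: the paper's term $\eps|q|$ is the dual form of your tilt by $\eps|x|$. You then glue with $v_G$ by a maximum. Your single function $W$ is at once strict outside a compact set and coercive, so it serves both (i) and (iv). Verifying it needs only the almost-everywhere characterization of subsolutions and the locality of the subsolution property, with no action estimates.

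In exchange, the paper's route gives extra intrinsic information. When $c(G)>c(H)$, the maximal critical subsolution vanishing at a point grows linearly at infinity. The paper also gets a strict-at-infinity subsolution $\min\{S,u_H^0+k\}$ that is bounded. Your $W$ is unbounded, which is harmless because Lemma~\ref{lem:action} does not require boundedness.

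Items (ii) and (iii) coincide with the paper's arguments. Your two-sided truncation is the same clamping of $w$ between $v_G-n$ and $v_G+n$.
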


\begin{proof}
Set $\delta:=c(G)-c(H)>0$.

\smallskip

{(i)}
Fix $x\in\MM$ and a sequence $\lambda_n\to0^+$.
By Lemma~\ref{lem:discounted-decomp}, the discounted measures
$\tilde\mu_x^{\lambda_n}$ satisfy the hypotheses of
Proposition~\ref{prop:decomp}. After extracting a subsequence,
let $\tilde\mu$ and $\tilde\nu_\infty$ be the measures provided
by that proposition. Since $\delta>0$, the action identity
\eqref{eq:act-balance} implies
\[
 \delta\,\tilde\nu_\infty(\TTorus)=0.
\]
Thus $\tilde\nu_\infty=0$ and $\tilde\mu(\TM)=1$.
By \eqref{eq:norm-limits}, $\tilde\mu\in\tilde\Mis(G)$,
which proves nonemptiness.

To prove the common compact support assertion, we first establish
a linear lower bound for
\[
 S(x):=S_{c(G)}^G(0,x),\qquad x\in\MM.
\]
By Proposition~\ref{prop:metric}, $S$ is a finite Lipschitz
critical subsolution. Applying Lemma~\ref{lem:smoothing} to
$u_H^0$, with periodic convolution, gives a smooth periodic
function $\psi$ such that
\[
 H(x,D\psi(x))\leqslant c(H)+\frac{\delta}{2}
 \qquad\hbox{for every $x\in\MM$.}
\]
Set $P:=\|D\psi\|_\infty$, and let $\ell\geqslant1$ be a
uniform Lipschitz constant for $H(x,\cdot)$ on
$\overline B_{P+1}$. Choose
\[
 \eps:=\min\left\{1,\frac{\delta}{2\ell}\right\}>0.
\]
Then, for every $x\in\MM$ and $|p|\leqslant\eps$,
\[
 H(x,D\psi(x)+p)
 \leqslant H(x,D\psi(x))+\ell|p|
 \leqslant c(H)+\frac{\delta}{2}+\ell\eps
 \leqslant c(G).
\]
Fenchel's inequality therefore yields
\[
 L_H(x,q)+c(G)
 \geqslant
 L_H(x,q)+ H(x,D\psi(x)+p)
\geqslant
\langle D\psi(x)+p,q\rangle
 \qquad\hbox{whenever $|p|\leqslant\eps$.}
\]
Taking the supremum over such $p$, we obtain
\begin{equation}\label{eq:strictmetric}
 L_H(x,q)+c(G)
 \geqslant\langle D\psi(x),q\rangle+\eps|q|
 \qquad\hbox{for every $(x,q)\in\TM$.}
\end{equation}

Choose $r>0$ such that $\supp(V)\subset B_r$, and let
$x\notin\overline B_r$. Fix $t>0$ and any Lipschitz curve
$\gamma:[-t,0]\to\MM$ with $\gamma(-t)=0$ and $\gamma(0)=x$.
Define
\[
 \tau:=\max\{s\in[-t,0]:\gamma(s)\in\overline B_r\},
 \qquad z:=\gamma(\tau).
\]
By continuity, $\tau\in(-t,0)$, $z\in\partial B_r$, and
$\gamma(s)\notin\overline B_r$ for every $s\in(\tau,0]$.
In particular, $V(\gamma(s))=0$ on this last portion.

Let $v_G$ be a bounded critical subsolution of
$G(x,Dv_G)=c(G)$, whose existence is guaranteed by
Proposition~\ref{prop:bdd-subsol}. By domination along
the first portion of $\gamma$, we have
\[
 \int_{-t}^{\tau}
 \big(L_G(\gamma(s),\dot\gamma(s))+c(G)\big)\,ds
 \geqslant v_G(z)-v_G(0)
 \geqslant-2\|v_G\|_\infty.
\]
On the remaining portion, $L_G=L_H$, so
\eqref{eq:strictmetric} gives
\begin{align*}
 \int_{\tau}^{0}
 \big(L_G(\gamma(s),\dot\gamma(s))+c(G)\big)\,ds
 &\geqslant\psi(x)-\psi(z)
          +\eps\int_\tau^0|\dot\gamma(s)|\,ds\\
 &\geqslant-2\|\psi\|_\infty+\eps|x-z|\\
 &\geqslant-2\|\psi\|_\infty+\eps(|x|-r).
\end{align*}
Adding the two estimates and taking the infimum over the curves
and travel times in \eqref{eq:actionS} yields
\[
 S(x)\geqslant
 \eps|x|-\eps r-2\|\psi\|_\infty-2\|v_G\|_\infty
 \qquad\hbox{if $|x|>r$.}
\]
For $|x|\leqslant r$, domination gives
$S(x)\geqslant v_G(x)-v_G(0)\geqslant-2\|v_G\|_\infty$.
Consequently,
\begin{equation}\label{eq:Scoercive}
 S(x)\geqslant\eps|x|-C_1
 \qquad\hbox{for every $x\in\MM$,}
\end{equation}
where we may take
\[
 C_1:=\eps r+2\|\psi\|_\infty+2\|v_G\|_\infty.
\]

We now use this estimate in the construction of
Proposition~\ref{prop:bdd-subsol}. Choose $k>0$ such that
$S<u_H^0+k$ on a neighborhood of $\supp(V)$, and set
\[
 \hat v_G:=\min\{S,u_H^0+k\}.
\]
Near $\supp(V)$, this function agrees with $S$.
Away from $\supp(V)$, both functions in the minimum are
subsolutions at level $c(G)$, since $G=H$ there and
$c(H)<c(G)$. Thus $\hat v_G$ is a critical subsolution.
It is bounded from above by $u_H^0+k$ and bounded from below
by \eqref{eq:Scoercive} and the boundedness of $u_H^0$.

Choose $R>r$ sufficiently large that
\[
 \eps R-C_1>\|u_H^0\|_\infty+k.
\]
Then \eqref{eq:Scoercive} implies
\[
 \hat v_G=u_H^0+k
 \qquad\hbox{on $\MM\setminus B_R$.}
\]
In particular,
\[
 G(x,D\hat v_G(x))\leqslant c(H)
 \qquad\hbox{for a.e. $x\in\MM\setminus B_R$.}
\]
Set $K:=\overline B_{R+1}$, and choose a smooth function
$\chi:\MM\to[0,1]$ such that $\chi=0$ on $B_R$ and
$\chi=1$ on $\MM\setminus K$. The preceding inequalities give
\[
 G(x,D\hat v_G(x))\leqslant c(G)-\delta\chi(x)
 \qquad\hbox{for a.e. $x\in\MM$.}
\]
Since the right-hand side is bounded and uniformly continuous,
Lemma~\ref{lem:action}, applied to $\hat v_G$, yields
\[
 0\leqslant\int_{\TM}\bigl(L_G+c(G)-\delta\chi\bigr)
       \,d\tilde\mu
 =-\delta\int_{\MM}\chi\,d\mu
 \qquad\hbox{for every $\tilde\mu\in\tilde\Mis(G)$,}
\]
where $\mu:=\pi_{1\#}\tilde\mu$. Thus $\int\chi\,d\mu=0$.
As $\chi=1$ on $\MM\setminus K$, it follows that
$\mu(\MM\setminus K)=0$, and hence $\supp(\mu)\subset K$.
The construction of $K$ is independent of $\tilde\mu$, so the
same compact set contains the supports of all measures in $\Mis(G)$.
This completes the proof of (i).\smallskip

{(ii)}
By (i), we can choose $\tilde\mu\in\tilde\Mis(G)$.
Applying Lemma~\ref{lem:action} to a subsolution at level $c_f(G)$ gives
\[
 0\leqslant\int_{\TM}\bigl(L_G+c_f(G)\bigr)\,d\tilde\mu
 =c_f(G)-c(G)\leqslant0.
\]
Thus $c_f(G)=c(G)$, proving (ii).\smallskip

(iii). 
Let $w$ be any critical subsolution, possibly unbounded,
satisfying \eqref{eq:Mather-norm}.
The integrals in that condition are finite since every
$\mu\in\Mis(G)$ is supported in $K$. For $n\in\N$, set
\[
 w_n:=\min\{\max\{w,v_G-n\},v_G+n\}.
\]
Stability of critical subsolutions under maxima and minima
shows that $w_n$ is a critical subsolution. Moreover,
\[
 v_G-n\leqslant w_n\leqslant v_G+n,
\]
so $w_n$ is bounded.  

For every bounded set $B\subset\R^d$, there exists
$n(B)\in\N$ such that
\[
 v_G-n\leqslant w\leqslant v_G+n
 \quad\hbox{on $B$,}\qquad\hbox{for every $n\geqslant n(B)$.}
\]
Thus $w_n=w$ on $B$ for every $n\geqslant n(B)$.
In particular, taking $B=K$ and recalling (i), we obtain
\[
 \int_{\MM}w_n\,d\mu
 =\int_{\MM}w\,d\mu\leqslant0
 \qquad
 \hbox{for every $\mu\in\Mis(G)$ and $n\geqslant n(K)$.}
\]
Since $c(G)>c(H)$, these are the only normalization
constraints in the definition of $\sol_-$.
Thus $w_n\in\sol_-$ for every $n\geqslant n(K)$.

The eventual equality on every bounded set also shows that
$w_n\ucv w$ in $\R^d$, i.e., any unbounded critical 
subsolution satisfying \eqref{eq:Mather-norm} can be obtained as a locally uniform 
limit of elements of $\sol_-$. 
Thus allowing unbounded critical subsolutions does not
increase the supremum in \eqref{eq:selected}, proving (iii).
\smallskip

{(iv).}
Since $S$ is continuous and $K$ is compact, the number
\[
 M_K:=\max_K S
\]
is finite. The function $S-M_K$ is a critical subsolution
and is nonpositive on $K$. By (i), it therefore satisfies
\[
 \int_{\MM}(S-M_K)\,d\mu\leqslant0
 \qquad\hbox{for every $\mu\in\Mis(G)$.}
\]
Although this subsolution is unbounded (in fact, it is coercive,
see \eqref{eq:Scoercive}), (iii) gives
$S-M_K\leqslant u_G^0$. Combining this inequality with
\eqref{eq:Scoercive}, we conclude that
\[
 u_G^0(x)\geqslant S(x)-M_K
 \geqslant\eps|x|-C_1-|M_K|
 \qquad\hbox{for every $x\in\MM$.}
\]
This proves (iv) with $C:=C_1+|M_K|$.
\end{proof}

\begin{esempio}\label{ex:coercive}
Let $d=1$, $H(x,p)=p^2/2$ and 
$V\in\CC_c^\infty(\R)$ with $-1\leqslant V\leqslant0$, and $V(x)=-1$
if and only if $x=0$. Then $c(H)=0$ and $c(G)=c_f(G)=1$.
Indeed, zero is a subsolution at level one, while at any smaller
level the almost-everywhere subsolution inequality is impossible near
the minimum of $V$. Since
\[
 L_G(x,q)+1=\tfrac12q^2+1+V(x),
\]
the unique Mather probability is $\delta_{(0,0)}$. The selection
formula gives
\[
 u_G^0(x)=\left|\int_0^x\sqrt{2(1+V(s))}\,ds\right|.
\]
In fact, every normalized critical subsolution is bounded from above
by this function, and truncating it from above by any positive
constant gives an admissible bounded subsolution. In particular,
$u_G^0(x)=\sqrt2\,|x|+O(1)$ at infinity. Thus the boundedness of each
$u_G^\lambda$ does not give a uniform global bound as $\lambda\to0$.
\end{esempio}

We now turn to the zero-gap case. The following proposition
shows that, under a strictness assumption near $\supp(V)$,
every bounded critical subsolution lying below $u_H^0$
automatically satisfies the Mather constraints. These can
therefore be omitted from the selection formula.

\begin{prop}\label{prop:strict}
Assume $V\leqslant0$ and $c(G)=c(H)$. Suppose there exists a critical
subsolution $v$ which is uniformly strict on an open neighborhood $U$
of $\supp(V)$, that is,
\[
 G(x,Dv(x))\leqslant c(G)-\delta
 \qquad\hbox{for a.e. $x\in U$}
\]
for some $\delta>0$. Then
\begin{equation}\label{eq:upperonly}
 u_G^0(x)=\sup\{w(x):w\hbox{ is a bounded critical subsolution of $G$},
                         \ w\leqslant u_H^0\}.
\end{equation}
The same formula holds with bounded critical solutions in place of
subsolutions.
\end{prop}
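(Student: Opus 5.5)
Since $c(G)=c(H)$, Theorem~\ref{thm:main} describes $u_G^0$ as the supremum over $\sol_-$, that is, over bounded critical subsolutions $w\leqslant u_H^0$ satisfying \eqref{eq:Mather-norm}. The right-hand side of \eqref{eq:upperonly} drops only the Mather constraints, so it is at least $u_G^0$. For the reverse inequality, the plan is to show that every bounded critical subsolution $w\leqslant u_H^0$ automatically satisfies \eqref{eq:Mather-norm}. To do so, I will first prove that every projected Mather measure has support disjoint from $\supp(V)$, and then that it is a periodic Mather measure placed in $\MM$.

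\textbf{Step 1: supports avoid $U$.} Take $\tilde\mu\in\tilde\Mis(G)$ (if there are none, the constraints are vacuous). The function $v$ is Lipschitz by coercivity, since it is a critical subsolution. Choose a continuous $\chi:\MM\to[0,1]$, supported in $U$ and equal to $1$ on $\supp(V)$. It can be taken uniformly continuous; indeed, since $\supp(V)$ is compact we may shrink $U$ to a bounded neighborhood. Then
\[
 G(x,Dv)\leqslant c(G)-\delta\chi(x)\quad\hbox{a.e.}
\]
Lemma~\ref{lem:action} gives
\[
 0\leqslant\int\bigl(L_G+c(G)-\delta\chi\bigr)\,d\tilde\mu=-\delta\int\chi\,d\mu .
\]
Hence $\mu(\{\chi>0\})=0$, and in particular $\mu$ vanishes on a neighborhood of $\supp(V)$. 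Consequently $V=0$ $\mu$-a.e., and $\int L_H\,d\tilde\mu=\int L_G\,d\tilde\mu=-c(G)=-c(H)$.

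\textbf{Step 2: periodic Mather property.} The measure $\tilde\pi_\#\tilde\mu$ is a closed probability measure on $\TTorus$. Its action equals $\int L_H\,d\tilde\mu=-c(H)$, because $L_H$ is periodic. Therefore $\tilde\pi_\#\tilde\mu\in\tilde\Mis(H)$, and \eqref{eq:period-norm} yields $\int_{\MM}u_H^0\,d\mu=\int_{\TT^d}u_H^0\,d(\pi_\#\mu)\leqslant0$. For any bounded critical subsolution $w\leqslant u_H^0$ we get $\int w\,d\mu\leqslant\int u_H^0\,d\mu\leqslant0$. So $w\in\sol_-$, and this proves \eqref{eq:upperonly}.

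\textbf{Step 3: solutions instead of subsolutions.} Denote by $\Sigma$ the supremum over bounded critical solutions $w\leqslant u_H^0$. Every such solution is a subsolution, so $\Sigma\leqslant u_G^0$. For the reverse, Theorem~\ref{thm:main} shows that $u_G^0$ is itself a bounded critical solution with $u_G^0\leqslant u_H^0$, so it is admissible and $u_G^0\leqslant\Sigma$. The only delicate point in the whole argument is Step 1: one must make sure that Lemma~\ref{lem:action} applies with the nonconstant right-hand side, which requires it to be bounded and uniformly continuous, and that the vanishing of $\mu$ near $\supp(V)$ is really what allows us to replace $L_G$ by $L_H$. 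Both are straightforward once $\chi$ is chosen as above.
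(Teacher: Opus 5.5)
Your proposal is correct and follows essentially the same route as the paper. Like the paper, you use a cutoff $\chi$ equal to one on $\supp(V)$ and apply Lemma~\ref{lem:action} to the strict subsolution $v$, which shows that projected Mather measures for $G$ do not charge $\supp(V)$; their images in $\TTorus$ are then periodic Mather measures for $H$, so every bounded critical subsolution below $u_H^0$ lies in $\sol_-$, and the passage to solutions uses that $u_G^0$ is itself an admissible bounded solution.
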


\begin{proof}
Choose $\chi\in\CC_c^\infty(U)$ with $0\leqslant\chi\leqslant1$
and $\chi=1$ on $\supp(V)$. The critical subsolution inequality
and the strictness assumption imply
\[
 G(x,Dv(x))\leqslant c(G)-\delta\chi(x)
 \qquad\hbox{for a.e. $x\in\MM$.}
\]
Fix $\tilde\mu\in\tilde\Mis(G)$ and write
$\mu:=\pi_{1\#}\tilde\mu$. Lemma~\ref{lem:action} applies directly
to $v$, whether or not it is bounded, and gives
\[
 0\leqslant\int_{\TM}\bigl(L_G+c(G)-\delta\chi\bigr)
       \,d\tilde\mu
 =-\delta\int_{\MM}\chi\,d\mu.
\]
Since $\chi$ is nonnegative and equals one on $\supp(V)$,
we obtain $\mu(\supp(V))=0$.

The measure $\tilde\nu:=\tilde\pi_\#\tilde\mu$ is a closed
probability measure on $\TTorus$: closedness follows by
testing $\tilde\mu$ against the periodic lifts of smooth
functions on $\TT^d$. Moreover, $V=0$ $\mu$-almost everywhere,
so \eqref{eq:LG}, periodicity of $L_H$ and $c(G)=c(H)$ give
\[
 \int_{\TTorus}(L_H+c(H))\,d\tilde\nu
 =\int_{\TM}(L_H+c(H))\,d\tilde\mu
 =\int_{\TM}(L_G+c(G))\,d\tilde\mu
 =0.
\]
Thus $\tilde\nu\in\tilde\Mis(H)$.

Now let $w$ be a bounded critical subsolution of $G$ with
$w\leqslant u_H^0$, and set $\nu:=\pi_{1\#}\tilde\nu$.
By periodicity of $u_H^0$ and \eqref{eq:period-norm},
\[
 \int_{\MM}w\,d\mu
 \leqslant\int_{\MM}u_H^0\,d\mu
 =\int_{\TT^d}u_H^0\,d\nu
 \leqslant0.
\]
Since $\tilde\mu\in\tilde\Mis(G)$ was arbitrary, $w$
satisfies all the Mather constraints and hence belongs
to $\sol_-$. Formula \eqref{eq:upperonly} therefore follows
from Theorem~\ref{thm:main}.

Finally, the same theorem shows that $u_G^0$ is itself a
bounded critical solution satisfying $u_G^0\leqslant u_H^0$.
Consequently, restricting the supremum in
\eqref{eq:upperonly} to bounded critical solutions does not
change its value.
\end{proof}
Theorem~\ref{thm:main} itself does not require the strictness assumption
of Proposition~\ref{prop:strict}. When that assumption is
unavailable, the contribution retained in the whole space is handled
by the Mather constraints, while the remaining contribution is handled
by $u_H^0$ through \eqref{eq:limsup-parts}.

\section{Selection for nonnegative potentials}\label{sec:positive}

It is natural to ask whether the selection principle
can be extended to a larger class of compactly supported potentials.
The simplest case to investigate next is that of nonnegative
potentials.
Throughout this section we will therefore assume $V\geqslant0$ and write
$c:=c(G)=c(H)$, using Proposition~\ref{prop:corder}.

Proposition~\ref{prop:pos-bdd} gives uniform bounds and local
precompactness of the discounted solutions. By
Corollary~\ref{cor:clusters}, every accumulation point is a
bounded critical solution lying above $u_H^0$.
This reverses the comparison obtained for nonpositive potentials
in the zero-gap case and suggests a characterization by an infimum.
The remaining question is whether all accumulation points coincide
and how to identify their common value.

We now introduce a candidate for the selected limit.
Let $\sol_+$ denote the family of bounded solutions $v$ of
$G(x,Dv)=c$ in $\MM$ satisfying $v\geqslant u_H^0$ on $\MM$,
and define
\begin{equation}\label{eq:pos-envelope}
 \underline u_G(x):=\inf_{v\in\sol_+}v(x),
 \qquad x\in\MM.
\end{equation}
Applying Theorem~\ref{thm:pos-existence} to $u_H^0$ shows
that $\sol_+$ has a least element. Thus the infimum in
\eqref{eq:pos-envelope} is attained and defines a bounded
critical solution satisfying
\[
 u_H^0\leqslant\underline u_G\leqslant u_H^0+C
 \qquad\hbox{in $\MM$.}
\]

By Corollary~\ref{cor:clusters}, every accumulation point
$u$ of the discounted solutions belongs to $\sol_+$ and
therefore satisfies
\begin{equation}\label{eq:pos-envelope-bound}
 \underline u_G\leqslant u\qquad\hbox{in $\MM$.}
\end{equation}
Proposition~\ref{prop:cluster-norm} gives the Mather constraints for $u$.
Since both functions are bounded, it follows that
\begin{equation}\label{eq:pos-envelope-Mather}
 \int_{\MM}\underline u_G\,d\mu
 \leqslant\int_{\MM}u\,d\mu\leqslant0
 \qquad\hbox{for every $\mu\in\Mis(G)$.}
\end{equation}
Thus $\underline u_G$ itself satisfies all the Mather constraints,
and imposing them on the candidates in \eqref{eq:pos-envelope}
does not change the infimum. This observation holds for every
nonnegative potential, independently of the extra hypothesis below.

To identify $\underline u_G$ as the selected limit, it remains
to prove the reverse inequality in \eqref{eq:pos-envelope-bound}.
The following theorem establishes it under an additional
condition on the averages of $u_H^0$ against periodic
Mather measures.

\begin{teorema}\label{thm:positive}
Assume that $V\geqslant0$ and that
\begin{equation}\tag{ZA}\label{eq:zeroaverages}
 \int_{\TT^d}u_H^0(y)\,d\nu(y)=0
 \qquad\hbox{for every $\nu\in\Mis(H)$.}
\end{equation}
Then $u_G^\lambda\ucv u_G^0$ in $\MM$ as $\lambda\to0^+$,
where
\begin{equation}\tag{$\mathrm{Sel}_{+}$}\label{eq:pos-selected}
 u_G^0(x)=\inf_{v\in\sol_+}v(x),\qquad x\in\MM.
\end{equation}
The limit is a bounded critical solution. The infimum is
unchanged if the candidates are additionally required to
satisfy \eqref{eq:Mather-norm}.
\end{teorema}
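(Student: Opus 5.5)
The plan is to prove the inequality $u\leqslant v$ for every accumulation point $u\in\omegasetG$ and every $v\in\sol_+$. Combined with \eqref{eq:pos-envelope-bound}, this shows that every accumulation point equals $\underline u_G$. Proposition~\ref{prop:pos-bdd} then gives convergence of the whole family, and the remaining assertions were already observed before the statement, in particular \eqref{eq:pos-envelope-Mather}. The idea is to run the argument of Proposition~\ref{prop:lower-sel} with the roles reversed. The subsolution tested against discounted measures of $u_G^\lambda$ is replaced by a solution $v$ tested against discounted measures built from \emph{its} calibrated curves.

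\emph{Step 1: a one-sided inequality.} Fix $v\in\sol_+$, $x\in\MM$, and a backward calibrated curve $\gamma_x$ for $v$ as in \eqref{eq:crit-calibr}. Its speed is bounded by some $K$. We use $\gamma_x$ as a competitor in \eqref{eq:disc-formula}. Writing $\int_{-\infty}^0\e^{\lambda s}(L_G+c)\,ds$ and integrating by parts with $\frac{d}{ds}$ of $v(x)-v(\gamma_x(s))$, which is justified because $v$ is bounded, we get
\[
 u_G^\lambda(x)\leqslant v(x)-\int_{\MM}v\,dm_x^\lambda .
\]
Here $\tilde m_x^\lambda$ is the exponentially weighted occupation measure of $\gamma_x$, defined as in \eqref{eq:disc-measure}, and $m_x^\lambda$ is its projection. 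The same computation gives
\[
 \int(L_G+c)\,d\tilde m_x^\lambda=\lambda\Bigl(v(x)-\int v\,dm_x^\lambda\Bigr),
\]
which is $O(\lambda)$ because $v$ is bounded. For bounded $\varphi$, the closedness defect of $\tilde m_x^\lambda$ is bounded by $2\lambda\|\varphi\|_\infty$, exactly as in \eqref{eq:closeddefect}. Hence Proposition~\ref{prop:decomp} applies along any sequence $\lambda_n\to0^+$ for which $u_G^{\lambda_n}\ucv u$. After extraction it yields limits $\tilde\mu$ and $\tilde\nu_\infty$ with the normalizations \eqref{eq:norm-limits}.

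\emph{Step 2: liminf of the averages.} Since $v-u_H^0\geqslant0$ is continuous, vague convergence together with spatial cutoffs gives
\[
 \int(v-u_H^0)\,d\mu\leqslant\liminf_n\int(v-u_H^0)\,dm_x^{\lambda_n}.
\]
Periodicity of $u_H^0$ gives $\int u_H^0\,dm_x^{\lambda_n}\to\int_{\TT^d}u_H^0\,d\nu$. Adding these two relations, mirroring \eqref{eq:limsup-parts}, yields
\[
 \liminf_n\int v\,dm_x^{\lambda_n}\geqslant\int v\,d\mu+\int_{\TT^d}u_H^0\,d\nu_\infty .
\]
The second term vanishes by \eqref{eq:zeroaverages} and \eqref{eq:norm-limits}. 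For the first term, we use that $V\geqslant0$ and $c(G)=c(H)$. Take any $\tilde\mu'\in\tilde\Mis(G)$. Then
\[
 0=\int(L_G+c)\,d\tilde\mu'=\int(L_H+c)\,d(\tilde\pi_\#\tilde\mu')+\int V\,d\mu'.
\]
Both terms on the right are nonnegative by Lemma~\ref{lem:action}, so $V=0$ $\mu'$-a.e. and $\tilde\pi_\#\tilde\mu'\in\tilde\Mis(H)$. Applying this to $\tilde\mu/\tilde\mu(\TM)$ when $\tilde\mu\neq0$, and using $v\geqslant u_H^0$ with \eqref{eq:zeroaverages}, we get $\int v\,d\mu\geqslant\int u_H^0\,d\mu=0$. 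Passing to the limit in Step 1 then gives $u(x)\leqslant v(x)$.

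The main point to check carefully is Step 1. We need a genuine calibrated curve for the possibly non-unique bounded solution $v$, with a uniform speed bound; this is available from \eqref{eq:crit-calibr}. We also need the integration by parts on the half-line to be legitimate, which holds because $v$ is bounded and Lipschitz. The rest reuses Proposition~\ref{prop:decomp} verbatim. Note that hypothesis \eqref{eq:zeroaverages} is used in two places. It kills the escaping term, whose sign would otherwise be wrong, since \eqref{eq:period-norm} only gives $\leqslant0$. It also makes the retained Mather term nonnegative.
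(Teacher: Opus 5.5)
Your argument is correct, and its core is the paper's: the competitor estimate $u_G^\lambda(x)\leqslant v(x)-\int_{\MM}v\,dm_x^\lambda$ from a calibrated curve of $v\in\sol_+$ (Lemma~\ref{lem:competitor}), followed by a passage to the limit in which \eqref{eq:zeroaverages} kills the average. The difference lies in how you pass to the limit.

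The paper uses $v\geqslant u_H^0$ \emph{before} letting $\lambda\to0^+$, so only $\int_{\MM}u_H^0\,dm_x^\lambda$ has to be controlled. By periodicity this integral depends only on $\tilde\pi_\#\tilde m_x^\lambda$. Any narrow limit $\tilde\nu$ of these measures on $\TTorus$ is closed and satisfies $\int(L_H+c)\,d\tilde\nu=\lim_n\int(L_H+c)\,d\tilde m_x^{\lambda_n}\leqslant0$, because $L_H\leqslant L_G$. Hence $\tilde\nu\in\tilde\Mis(H)$, and a single application of \eqref{eq:zeroaverages} concludes.

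You instead keep $v$ inside the integral. This forces you to invoke Proposition~\ref{prop:decomp}, a vague lower-semicontinuity argument with cutoffs, and a separate check that $\tilde\pi_\#$ maps $\tilde\Mis(G)$ into $\tilde\Mis(H)$ when $V\geqslant0$. That check is the same inequality $L_H\leqslant L_G$, applied piece by piece. Since $\tilde\nu=\tilde\pi_\#\tilde\mu+\tilde\nu_\infty$, your ``two uses'' of \eqref{eq:zeroaverages} are really one use on a convex combination of periodic Mather measures.

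Your route does give the slightly finer intermediate bound $\liminf_n\int v\,dm_x^{\lambda_n}\geqslant\int v\,d\mu+\int u_H^0\,d\nu_\infty$, which keeps $v$ rather than $u_H^0$ on the mass retained in the whole space. Under the stated hypothesis this extra information is discarded immediately. The paper's direct argument is therefore shorter and avoids the decomposition altogether.
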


The proof relies on an estimate obtained by using calibrated
curves of a candidate $v\in\sol_+$ as competitors in the
discounted representation formula. The next lemma establishes
this estimate without assuming \eqref{eq:zeroaverages}.

\begin{lemma}\label{lem:competitor}
Let $V\geqslant0$, let $v\in\sol_+$, and fix $x\in\MM$.
Choose a backward calibrated curve $\gamma_x$ for $v$,
with $\gamma_x(0)=x$, and define the probability measure
$\tilde m_x^\lambda$ on $\TM$ by
\[
 \int_{\TM}f(y,q)\,d\tilde m_x^\lambda(y,q)
 :=
 \lambda\int_{-\infty}^0\e^{\lambda s}
 f(\gamma_x(s),\dot\gamma_x(s))\,ds
 \quad\hbox{for every $f\in\CC_b(\TM)$.}
\]
Let $m_x^\lambda:=\pi_{1\#}\tilde m_x^\lambda$ be its
projection onto $\MM$. Then
\begin{equation}\label{eq:competitor}
 u_G^\lambda(x)\leqslant v(x)
 -\int_{\MM}u_H^0(y)\,dm_x^\lambda(y).
\end{equation}
Moreover, the measures $\tilde\pi_\#\tilde m_x^\lambda$
are narrowly precompact on $\TTorus$, and every accumulation
point as $\lambda\to0^+$ belongs to $\tilde\Mis(H)$.
\end{lemma}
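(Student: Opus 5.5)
We use $\gamma_x$ as a competitor in the discounted representation formula \eqref{eq:disc-formula} for $u_G^\lambda$, and write the action of $v$ along it by calibration. Set $\ell(s):=L_G(\gamma_x(s),\dot\gamma_x(s))+c$. The curve $\gamma_x$ is Lipschitz with speed bounded by $\kappa_*$. Hence
\[
 u_G^\lambda(x)\leqslant\int_{-\infty}^0\e^{\lambda s}\ell(s)\,ds .
\]
By \eqref{eq:crit-calibr}, we have $\ell(s)=\frac{d}{ds}v(\gamma_x(s))$ for a.e.\ $s$. Since $v$ is bounded, integration by parts gives
\[
 \int_{-\infty}^0\e^{\lambda s}\ell(s)\,ds=v(x)-\lambda\int_{-\infty}^0\e^{\lambda s}v(\gamma_x(s))\,ds=v(x)-\int_{\MM}v\,dm_x^\lambda .
\]
The boundary term at $-\infty$ vanishes because $v$ is bounded. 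Finally, $v\geqslant u_H^0$ yields $-\int v\,dm_x^\lambda\leqslant-\int u_H^0\,dm_x^\lambda$, which proves \eqref{eq:competitor}. The only care needed is the convergence of the improper integrals, and this follows from boundedness of $v$ and from $\ell$ being bounded.

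For the second assertion, note first that the measures $\tilde\pi_\#\tilde m_x^\lambda$ are probability measures supported in the compact set $\TT^d\times\overline B_{\kappa_*}$. Narrow precompactness is therefore immediate. Let $\tilde\nu$ be a narrow limit along $\lambda_n\to0^+$. We establish two properties of $\tilde\nu$, closedness and minimal action.

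\emph{Closedness.} For $\varphi\in\CC^1(\TT^d)$, lifted periodically, the same integration by parts gives
\[
 \int\langle D\varphi,q\rangle\,d\tilde m_x^\lambda=\lambda\Big(\varphi(x)-\int\varphi\,dm_x^\lambda\Big)=O(\lambda).
\]
Passing to the limit, $\tilde\nu$ is closed.

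\emph{Action.} We compute, using $V\geqslant0$,
\[
 \int_{\TTorus}(L_H+c)\,d\tilde\pi_\#\tilde m_x^\lambda=\int_{\TM}(L_G+c)\,d\tilde m_x^\lambda-\int_{\MM}V\,dm_x^\lambda\leqslant\int_{\TM}(L_G+c)\,d\tilde m_x^\lambda .
\]
By the computation above, the right-hand side equals $\lambda\big(v(x)-\int v\,dm_x^\lambda\big)$, which is at most $2\lambda\|v\|_\infty$. Now let $n\to\infty$, using continuity of $L_H$ on the compact support. This gives
\[
 \int(L_H+c)\,d\tilde\nu\leqslant0 .
\]
Lemma~\ref{lem:action}, applied on the torus with a periodic critical subsolution, gives the reverse inequality. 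Hence the action vanishes and $\tilde\nu\in\tilde\Mis(H)$.

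There is no real obstacle. The step most worth checking is the sign bookkeeping with $V$: because $V\geqslant0$, discarding the potential only lowers the periodic action, so the inequality goes in the needed direction. This is exactly why the argument requires no tightness and no decomposition as in Proposition~\ref{prop:decomp}.
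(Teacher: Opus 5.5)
Your proposal is correct and follows the paper's proof essentially step for step. You use $\gamma_x$ as a competitor in \eqref{eq:disc-formula}, then calibration and integration by parts, with the boundary term killed by boundedness of $v$, and then $v\geqslant u_H^0$; precompactness comes from the compact velocity support, closedness from the $O(\lambda)$ defect, and the identification of every limit as a periodic Mather measure from $L_H\leqslant L_G$ together with Lemma~\ref{lem:action} on the torus. Writing the sign step as discarding $\int V\,dm_x^\lambda\geqslant0$ and bounding the action by $2\lambda\|v\|_\infty$ is only a cosmetic restatement of the paper's argument.
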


\begin{proof}
The critical calibration identity \eqref{eq:crit-calibr}
implies, almost everywhere in time,
\[
 L_G(\gamma_x(s),\dot\gamma_x(s))+c
 =\frac{d}{ds}v(\gamma_x(s)).
\]
Using $\gamma_x$ as a competitor in \eqref{eq:disc-formula}
and integrating by parts gives
\begin{equation*}
 u_G^\lambda(x)\leqslant v(x)
 -\int_{\MM}v(y)\,dm_x^\lambda(y).
\end{equation*}
The boundary term at $-\infty$ vanishes because $v$ is
bounded. Since $v\geqslant u_H^0$, this proves
\eqref{eq:competitor}.

The speed bound for critical calibrated curves implies
that the measures $\tilde m_x^\lambda$ have a common
bounded velocity support. Their projections under
$\tilde\pi$ are therefore supported in a common compact
subset of $\TTorus$ and are narrowly precompact.
Let $\lambda_n\to0^+$ be such that
\[
 \tilde\pi_\#\tilde m_x^{\lambda_n}
 \weakcv\tilde\nu
 \qquad\hbox{narrowly on $\TTorus$.}
\]
For every smooth periodic test function $\varphi$,
integration by parts, as in \eqref{eq:closeddefect}, gives
\[
 \int_{\TM}\langle D\varphi(y),q\rangle
 \,d\tilde m_x^\lambda(y,q)
 =
 \lambda\left(
 \varphi(x)-\int_{\MM}\varphi(y)\,dm_x^\lambda(y)
 \right).
\]
The right-hand side tends to zero as $\lambda\to0^+$.
Passing to the limit along $\lambda_n$ shows that
$\tilde\nu$ is a closed probability measure on $\TTorus$.

Calibration and integration by parts also give
\[
 \int_{\TM}(L_G+c)\,d\tilde m_x^\lambda
 =
 \lambda\left(
 v(x)-\int_{\MM}v(y)\,dm_x^\lambda(y)
 \right)
 \longrightarrow0,
\]
again by boundedness of $v$.
Since $L_H\leqslant L_G$, periodicity of $L_H$ and the
uniform velocity bound yield
\[
 \int_{\TTorus}(L_H+c)\,d\tilde\nu
 =
 \lim_{n\to\infty}
 \int_{\TM}(L_H+c)\,d\tilde m_x^{\lambda_n}
 \leqslant0.
\]
The opposite inequality holds for every closed probability
measure on $\TTorus$ by Lemma~\ref{lem:action}.
Thus $\tilde\nu\in\tilde\Mis(H)$.
\end{proof}

We are now in a position to prove the selection theorem.

\begin{proof}[Proof of Theorem~\ref{thm:positive}]
Let $u$ be an accumulation point and choose a sequence
$\lambda_n\to0^+$ such that $u_G^{\lambda_n}\ucv u$ in $\MM$.
By Corollary~\ref{cor:clusters}, $u\in\sol_+$, and hence
$u\geqslant\underline u_G$.

Fix $x\in\MM$ and $v\in\sol_+$.
By Lemma~\ref{lem:competitor}, after passing to a further
subsequence we may assume that
\[
 \tilde\pi_\#\tilde m_x^{\lambda_n}
 \weakcv\tilde\nu
 \qquad\hbox{narrowly on $\TTorus$,}
\]
where $\tilde\nu\in\tilde\Mis(H)$.
Let $\nu:=\pi_{1\#}\tilde\nu\in\Mis(H)$ be its projection.
Since $u_H^0$ is continuous and periodic, taking the limit
in \eqref{eq:competitor} and using \eqref{eq:zeroaverages}
gives
\[
 u(x)\leqslant v(x)
 -\int_{\TT^d}u_H^0(y)\,d\nu(y)
 =v(x).
\]
As $x$ and $v$ were arbitrary, we obtain
$u\leqslant\underline u_G$ on $\MM$.
Thus every accumulation point equals $\underline u_G$.
Local precompactness from Proposition~\ref{prop:pos-bdd}
therefore gives
\[
 u_G^\lambda\ucv\underline u_G=u_G^0
 \qquad\hbox{in $\MM$ as $\lambda\to0^+$.}
\]

The assertion concerning the additional Mather constraints
follows from \eqref{eq:pos-envelope-Mather} and the observation
preceding the theorem.
\end{proof}

Condition \eqref{eq:zeroaverages} is stronger than
\eqref{eq:period-norm}, which only gives nonpositive
averages of $u_H^0$ against periodic Mather measures.
Without this additional condition, the average in the last
estimate of the preceding proof may be strictly negative,
and that estimate no longer implies $u\leqslant v$.
In fact, the convergence of $u_G^\lambda$ to
$\underline u_G$ can and does fail.
Proposition~\ref{prop:failed-inf} in Section~\ref{sec:examples}
provides a one-dimensional example in which
$u_G^\lambda\ucv u_G^0$, but
\[
 \underline u_G(a)<u_G^0(a)
 \qquad\hbox{at some point $a\in\MM$.}
\]
In view of \eqref{eq:pos-envelope-Mather}, adding the Mather
constraints to the infimum cannot restore
\eqref{eq:pos-selected}.

\section{Counterexamples for nonnegative potentials}\label{sec:examples}

In this section, we examine two formulas for the candidate limit,
suggested by the preceding results and by the one-dimensional theory.
The first example shows that the characterization by a supremum
subject only to whole-space Mather constraints, valid in a regime
of the one-dimensional theory in \cite{CDD23}, need not extend to
higher dimensions. The second shows that the infimum
\eqref{eq:pos-envelope} need not be selected when
\eqref{eq:zeroaverages} fails. In both examples the discounted
solutions converge; it is the proposed characterization of their
limit that fails.

\subsection{Failure of the characterization by whole-space Mather measures}
\label{subsec:drift}

For one-dimensional Tonelli Hamiltonians, \cite[Theorem~2(III)]{CDD23}
shows that, when $c_f(G)=c(G)=c_f(H)=c(H)=:c$, the vanishing discount
limit is given by the following pointwise supremum:
\begin{equation}\label{eq:sup-characterization}
\sup\left\{
w(x):
\begin{array}{l}
w\ \text{bounded critical subsolution of }
G(x,Dw)=c\ \IN\ \MM\\[1mm]
\displaystyle\int_{\MM}w\,d\mu\leqslant0
\quad\text{for every }\mu\in\Mis(G)
\end{array}
\right\}.
\end{equation}
The example below shows that, in higher dimensions, the
vanishing discount limit need not coincide with this supremum.
Indeed, all four critical values $c(G)$, $c_f(G)$, $c(H)$
and $c_f(H)$ agree, yet no whole-space Mather probability exists,
and the supremum subject to its constraints is infinite.\medskip

Let $d=2$, and set
\begin{equation}\label{eq:driftH}
 b(x):=\big(\cos(2\pi x_2),\sin(2\pi x_2)\big),\qquad
 H(x,p):=\tfrac12|p|^2+\langle b(x),p\rangle.
\end{equation}
Choose
\begin{equation}\label{eq:driftV}
 V\in\CC_c^\infty\big(B_{1/8}((0,1/4))\big),\qquad
 V\geqslant0,\qquad V\not\equiv0,
\end{equation}
and let $G=H-V$. Both Hamiltonians are smooth and uniformly convex
in the momentum, and $H$ is $\Z^2$--periodic in $x$. Their
Lagrangians are
\begin{equation}\label{eq:driftL}
 L_H(x,q)=\tfrac12|q-b(x)|^2,\qquad
 L_G(x,q)=\tfrac12|q-b(x)|^2+V(x).
\end{equation}
In particular, both are nonnegative.

\begin{prop}\label{prop:drift}
For the Hamiltonians \eqref{eq:driftH}--\eqref{eq:driftV}, the following
properties hold:
\begin{itemize}
\item[(i)] $c_f(G)=c(G)=c_f(H)=c(H)=0$;
\smallskip
\item[(ii)] $\tilde\Mis(G)=\emptyset$. There is also no closed
probability measure in $\R^2\times\R^2$ with zero $L_H$--action.
For both Lagrangians the infimum of the action over whole-space
closed probabilities is zero, but it is not attained;
\smallskip
\item[(iii)] as $\lambda\to0^+$, the discounted solutions
$u_G^\lambda$ increase pointwise and converge locally uniformly
to a bounded nonzero critical solution $u_G^0\geqslant0$.
\end{itemize}
\end{prop}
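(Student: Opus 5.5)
For (i) I will sandwich the four values using the ordering $c_f(F)\leqslant c(F)$ and Proposition~\ref{prop:corder}, which gives $c(G)=c(H)$ because $V\geqslant0$.

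For the upper bound, the constant function $0$ is a bounded subsolution at level $0$ for both Hamiltonians, since $H(x,0)=0$ and $G(x,0)=-V\leqslant0$. Hence $c_f(F)\leqslant c(F)\leqslant 0$ for $F=H,G$.

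For the lower bound $c_f(F)\geqslant0$, I will use Theorem~\ref{thm:free-action}. It suffices to build closed probabilities on $\R^2\times\R^2$ with arbitrarily small $L_G$--action; since $L_H\leqslant L_G$, the same measures work for $H$.
\begin{itemize}
\item On the lines $\{x_2=0\}$ and $\{x_2=1/2\}$ the drift is $b=(1,0)$ and $b=(-1,0)$ respectively.
\item These lines avoid $\supp(V)\subset B_{1/8}((0,1/4))$.
\item Take the closed curve that runs right along $x_2=0$ for time $T$ with $\dot\gamma=b$, climbs vertically to $x_2=1/2$ in unit time, runs left along $x_2=1/2$ for time $T$ with $\dot\gamma=b$, and descends back in unit time. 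Choose the horizontal abscissae of the two vertical segments so that they miss $\supp(V)$.
\item The horizontal legs have zero cost, and the vertical legs cost $O(1)$ uniformly in $T$.
\item The normalized occupation measure, as in the proof of Theorem~\ref{thm:free-action}, is therefore closed with action $O(1/T)\to0$.
\end{itemize}
Together with $L_G\geqslant L_H\geqslant0$, which gives the matching lower bound $0$ for the action, this proves (i). It also shows that the infimum in (ii) equals $0$.

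For (ii), suppose $\tilde\mu$ is a closed probability with $\int L_H\,d\tilde\mu=0$; the case of $L_G$ follows since $L_G\geqslant L_H$. Then $q=b(x)$ holds $\tilde\mu$--a.e. I will reach a contradiction with two bounded $\CC^1$ Lipschitz test functions.
\begin{itemize}
\item First test with $\varphi(x)=-\cos(2\pi x_2)/(2\pi)$. Closedness gives $\int\sin^2(2\pi x_2)\,d\tilde\mu=0$. So $\mu$ is concentrated on the lines $x_2\in\frac12\Z$, where $q=(\cos(2\pi x_2),0)=(\pm1,0)$.
\item Then test with $\varphi(x)=\arctan(x_1)\cos(2\pi x_2)$. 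On these lines the $x_2$--derivative vanishes. The integrand becomes $(1+x_1^2)^{-1}\cos^2(2\pi x_2)=(1+x_1^2)^{-1}>0$.
\item This contradicts closedness, so no such $\tilde\mu$ exists. In particular $\tilde\Mis(G)=\emptyset$, since by (i) an element of $\tilde\Mis(G)$ would have zero $L_G$--action.
\end{itemize}

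For (iii), recall that $c(G)=0$ and $L_G\geqslant0$. The representation formula \eqref{eq:disc-formula} then shows that $u_G^\lambda\geqslant0$. Moreover, for each fixed curve the integrand $\e^{\lambda s}L_G$ is nonnegative and increases pointwise as $\lambda$ decreases, because $s\leqslant0$. Taking infima over curves, $u_G^\lambda$ is nonincreasing in $\lambda$, i.e.\ it increases as $\lambda\to0^+$.

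Proposition~\ref{prop:pos-bdd} gives uniform boundedness and equi--Lipschitz continuity of the family. The pointwise monotone limit therefore exists, is locally uniform, and is the only accumulation point. By Corollary~\ref{cor:clusters} it is a bounded critical solution $u_G^0\geqslant0$.

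It remains to show $u_G^0\not\equiv0$. If it vanished identically, then $0$ would solve $G(x,D0)=0$, i.e.\ $-V\equiv0$, contradicting $V\not\equiv0$.

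The only step needing some care is the choice of test functions in (ii), which must be bounded while detecting the nonzero horizontal drift on the invariant lines; the product $\arctan(x_1)\cos(2\pi x_2)$ is designed to do exactly this.
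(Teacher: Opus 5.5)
Your proposal is correct and follows the paper's proof closely: you use the same rectangle curve (routed through Theorem~\ref{thm:free-action} rather than direct domination), the same two-stage closedness test, and the same nonvanishing argument. The single test function $\arctan(x_1)\cos(2\pi x_2)$, whose cosine factor matches the sign of $q_1$ on each line $x_2=k/2$, neatly replaces the paper's localized family $\eta_k(x_2)\arctan x_1$, and deriving monotonicity in $\lambda$ directly from the representation formula is an equally valid substitute for the paper's comparison argument.
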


\begin{proof}
We give the proofs of the three assertions separately.
\smallskip

(i). Since $H(x,0)=0$ and $G(x,0)=-V(x)\leqslant0$, the zero function is
a solution for $H$ and a subsolution for $G$ at level zero. Thus
$c_f(F)\leqslant c(F)\leqslant0$ for $F=H,G$.

To exclude a negative free critical value, fix $A>1$.
For $T>0$, let $\gamma_T:[0,2T+2]\to\R^2$ be the closed
piecewise affine curve that traverses, in the indicated order,
the four sides of the rectangle with vertices
\[
(A,0),\quad(A+T,0),\quad(A+T,1/2),\quad(A,1/2).
\]
The curve starts and ends at $(A,0)$, traversing each
horizontal side in time $T$ and each vertical side in time one,
 with constant speed on each side.

On the lower horizontal side the velocity is
$(1,0)=b(\gamma_T)$; on the upper one it is
$(-1,0)=b(\gamma_T)$. These portions have zero action. The whole
rectangle avoids $\supp(V)$, while on either vertical portion
$|\dot\gamma_T|=1/2$ and $|b|=1$. Hence
\begin{equation*}
\int_0^{2T+2}L_G(\gamma_T,\dot\gamma_T)\,ds
=\int_0^{2T+2}L_H(\gamma_T,\dot\gamma_T)\,ds
\leqslant\frac94.
\end{equation*}
If a global subsolution $w$ existed for $F$ at a level $a<0$,
domination along the closed curve $\gamma_T$ would give
\[
0=w(\gamma_T(2T+2))-w(\gamma_T(0))
\leqslant
\int_0^{2T+2}L_F(\gamma_T,\dot\gamma_T)\,ds
+a(2T+2)
\leqslant\frac94+a(2T+2).
\]
This is impossible for sufficiently large $T$.
Thus no global subsolution exists at a negative level,
which proves (i). Notice that no boundedness assumption on $w$ is needed here.

(ii).  Since $c_f(G)=c_f(H)=0$ by (i),
Theorem~\ref{thm:free-action} shows that the infimum of the
action over closed probability measures on $\R^2\times\R^2$ is zero
for both $L_G$ and $L_H$. However, neither infimum is attained. 

Indeed, suppose that a closed probability $\tilde\mu$ on
$\R^2\times\R^2$ has zero $L_G$--action. Nonnegativity in
\eqref{eq:driftL} implies
\begin{equation*}
 q=b(x),\qquad V(x)=0
 \qquad\hbox{for $\tilde\mu$--a.e. $(x,q)$.}
\end{equation*}
Use the bounded smooth function $\varphi(x)=-\cos(2\pi x_2)$ as a
test for closedness. We obtain
\[
 0=2\pi\int_{\R^2}\sin^2(2\pi x_2)\,d(\pi_{1\#}\tilde\mu)(x).
\]
Thus $\tilde\mu$ is concentrated on the countable union of the sets
\[
 E_k:=\{(x,q):x_2=k/2,\ q=((-1)^k,0)\},\qquad k\in\Z.
\]
For each $k$, take $\eta_k\in\CC_c^\infty(\R)$ with
$\eta_k(k/2)=1$ and
$\supp(\eta_k)\subset(k/2-1/8,k/2+1/8)$. The function
\[
 \varphi_k(x):=\eta_k(x_2)\arctan(x_1)
\]
is bounded and smooth with bounded derivative. 
On the union of the $E_j$, the second component of the
velocity vanishes. 
Closedness therefore gives
\[
0=\int_{\R^2\times\R^2}
\bigl\langle D\bigl(\eta_k(x_2)\arctan x_1\bigr),q\bigr\rangle
\,d\tilde\mu(x,q)
=(-1)^k\int_{E_k}\frac{1}{1+x_1^2}\,d\tilde\mu(x,q).
\]
The integrand is strictly positive on $E_k$, so $\tilde\mu(E_k)=0$.
This holds for every $k$, contradicting the fact that their countable
union carries a probability measure.

The same argument applies to $L_H$: zero action again implies
$q=b(x)$, and the condition involving $V$ was not used in the
closedness argument.
This proves (ii).\smallskip\noindent
%
%
%

(iii). 
The convergence argument is the same as in the proof of
\cite[Proposition~3.1]{DFIZ1}, where constants are critical
subsolutions. We recall it for the reader's convenience.

Since $c(G)=0$ and $L_G\geqslant0$, the representation formula
gives $u_G^\lambda\geqslant0$.
If $0<\lambda_1<\lambda_2$, then $u_G^{\lambda_2}$ is a
subsolution of the discounted equation with parameter
$\lambda_1$, since
\[
 \lambda_1u_G^{\lambda_2}+G(x,Du_G^{\lambda_2})
 =(\lambda_1-\lambda_2)u_G^{\lambda_2}\leqslant0.
\]
Comparison therefore yields
$u_G^{\lambda_2}\leqslant u_G^{\lambda_1}$.
By Proposition~\ref{prop:pos-bdd}, the family is uniformly
bounded and equi--Lipschitz for $0<\lambda\leqslant1$.
It thus converges pointwise and locally uniformly, as
$\lambda\to0^+$, to a bounded nonnegative function $u_G^0$.
Viscosity stability gives $G(x,Du_G^0)=0$.
The limit cannot be identically zero, since
$G(x,0)=-V(x)$ and $V\not\equiv0$.
This proves (iii).
\end{proof}

This example rules out the direct extension of the characterization
in \cite[Theorem~2(III)]{CDD23} to higher dimensions.
Since $\Mis(G)=\emptyset$, every bounded critical subsolution
satisfies the Mather constraints.
Every constant is such a subsolution, and hence 
the pointwise supremum in \eqref{eq:sup-characterization} is identically $+\infty$. 
The same is true if only bounded critical solutions are admitted,
since $u_G^0+C$ is a bounded critical solution for every constant $C$.

On the other hand, $u_H^\lambda\equiv u_H^0\equiv0$, so
\eqref{eq:zeroaverages} holds. Theorem~\ref{thm:positive}
therefore applies and identifies $u_G^0$ as the least bounded
critical solution above zero. 

\begin{oss}
The example extends to every $d\geqslant2$ by setting
\[
 b(x)=(\cos(2\pi x_2),\sin(2\pi x_2),0,\ldots,0)
\]
and choosing a nonzero nonnegative potential supported in a small ball
contained in $\{1/8<x_2<3/8\}$. All additional coordinates are kept
constant in the competitors. The closedness tests above still exclude
a whole-space minimizing probability, and all the conclusions remain
valid.
\end{oss}

\subsection{An asymmetric double pendulum and the infimum characterization}
\label{subsec:1d}

We now turn to the infimum in \eqref{eq:pos-envelope}.
An asymmetric double pendulum shows that \eqref{eq:zeroaverages}
can fail even for a smooth, uniformly convex Hamiltonian.
A nonnegative compactly supported perturbation then gives a
counterexample to \eqref{eq:pos-selected}. Convergence follows
from the one-dimensional result in \cite{CDD23}; we will compute
the selected limit and $\underline u_G$ at one equilibrium and
show that their values differ.

Let $W$ be a smooth nonnegative $1$--periodic
function whose zero set is
\[
 Z:=\tfrac12\Z.
\]
Consider the mechanical Hamiltonian
$\mathcal H(x,p):=\tfrac12p^2-W(x)$, and write
\begin{equation*}
 s(x):=\sqrt{2W(x)},\qquad
 A:=\int_0^{1/2} s(x)\,dx,\qquad
 B:=\int_{1/2}^1 s(x)\,dx.
\end{equation*}
We assume $A<B$. The zero-energy level consists of the two
branches $p=\pm s(x)$, which enclose two regions of areas
$2A$ and $2B$ over $[0,1/2]$ and $[1/2,1]$, respectively.

Denote by $c_{\mathcal H}(P)$ the critical value of the periodic
equation
\[
 \mathcal H(x,P+u')=c_{\mathcal H}(P).
\]
Seeking a $1$--periodic Lipschitz solution $u$ amounts to choosing
a $1$--periodic derivative $u'$ with zero mean. With the change
of unknown $v(x):=u(x)+Px$, the problem becomes
\[
 \mathcal H(x,v')=c_{\mathcal H}(P),\qquad
 \int_0^1v'(x)\,dx=P,
\]
with $v'$ extended periodically. Equivalently,
$v(x+1)=v(x)+P$. Thus $P$ is the signed area under the
graph of $v'$ over one period.

At level zero, a subsolution must satisfy
$-s(x)\leqslant v'(x)\leqslant s(x)$ almost everywhere, so
integration gives $|P|\leqslant A+B$.
Conversely, if $|P|\leqslant A+B$, the choice
$v'(x)=P s(x)/(A+B)$ has mean $P$ and defines a subsolution
at level zero. No negative level admits a subsolution, since
$W$ vanishes. Consequently,
\[
 \min_{P\in\R}c_{\mathcal H}(P)=0,
 \qquad
 \{P:c_{\mathcal H}(P)=0\}=[-(A+B),A+B].
\]

A solution at level zero satisfies
$v'(x)\in\{-s(x),s(x)\}$ almost everywhere.
The viscosity condition determines how these branches can be joined:
away from $Z$, a jump from $+s$ to $-s$ is admissible, whereas
a jump from $-s$ to $+s$ is not.
Indeed, at an upward corner a test function touching $v$ from
below can have slope strictly between $-s$ and $s$, where
$\mathcal H(x,\cdot)<0$, contradicting the supersolution inequality.
At points of $Z$, the two branches meet and can be joined continuously.

We now choose
\begin{equation}\label{eq:pendulum-P}
 2A<P<A+B,
\end{equation}
which is possible because $A<B$, and set
\begin{equation}\label{eq:pendulum-H}
 H(x,p):=\mathcal H(x,P+p)=\tfrac12(p+P)^2-W(x).
\end{equation}
The upper bound on $P$ places it in the interior of the minimum
interval of $c_{\mathcal H}$, so $c(H)=0$.
The lower bound has a different role: even following the upper
branch throughout the smaller region produces an increment
$A$ smaller than $P/2$. Hence every periodic critical
solution satisfies
\[
 u(1/2)-u(0)
 =\int_0^{1/2} v'(x)\,dx-P/2
 \leqslant A-P/2<0.
\]
This explains geometrically why the values at the two equilibria
cannot both be zero.

Choose the unique $\xi\in(1/2,1)$ such that
\[
 2\int_\xi^1s(x)\,dx=A+B-P.
\]
Let $p$ be the $1$--periodic function whose graph follows
the upper branch of the zero-energy level on $(0,\xi)$
and the lower branch on $(\xi,1)$, as shown in blue in
Figure~\ref{fig:pendulum}. Its jump at $\xi$ is downward,
and
\[
 \int_0^1p(x)\,dx
 =A+B-2\int_\xi^1s(x)\,dx=P.
\]
Set
\[
 v(x):=\int_0^x p(t)\,dt,
 \qquad
 u(x):=v(x)-Px.
\]
The downward jump is admissible in the viscosity sense,
so $u$ is a $1$--periodic critical solution of $H$.
Moreover,
\[
 u(0)=0,\qquad u(1/2)=A-P/2<0,
\]
hence $u\leqslant0$ on $Z$.

\begin{lemma}\label{lem:pendulum-maximal}
The function $u$ constructed above is the largest
$1$--periodic critical subsolution of $H$ that is
nonpositive on $Z$.
\end{lemma}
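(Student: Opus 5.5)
The plan is a direct comparison on one period, using only the almost-everywhere characterization of subsolutions and the explicit form of $u$. First, $u$ is itself admissible. It is a $1$--periodic critical solution, hence a subsolution. Its values on $Z$ are $u(0)=0$ and $u(1/2)=A-P/2<0$, and periodicity gives $u\leqslant0$ on all of $Z$. So it remains to show that $w\leqslant u$ for every $1$--periodic critical subsolution $w$ of $H$ with $w\leqslant0$ on $Z$. In fact only the condition $w(0)\leqslant0$ will be used, and hence also $w(1)=w(0)\leqslant0$.

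Fix such a $w$ and set $\hat w(x):=w(x)+Px$. Since $c(H)=0$ and $H(x,p)=\tfrac12(p+P)^2-W(x)$, the subsolution property is equivalent to
\[
 -s(x)\leqslant\hat w'(x)\leqslant s(x)\qquad\hbox{for a.e. $x$.}
\]
By construction, $u(x)+Px=v(x)$ with $v'=p$, where $p=s$ on $(0,\xi)$ and $p=-s$ on $(\xi,1)$. The proof then splits $[0,1]$ at $\xi$ and integrates the two inequalities from opposite ends.

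On $[0,\xi]$, integrate forward from $0$ using the upper bound $\hat w'\leqslant s$:
\[
 w(x)\leqslant w(0)+\int_0^x s(t)\,dt-Px\leqslant\int_0^x s(t)\,dt-Px=u(x).
\]
Here the last equality holds because $u$ follows the upper branch on $(0,\xi)$ and $u(0)=0$.

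On $[\xi,1]$, integrate backward from $1$ using the lower bound $\hat w'\geqslant-s$, that is, $\hat w(1)-\hat w(x)\geqslant-\int_x^1 s$. This gives
\[
 w(x)\leqslant w(1)+\int_x^1 s(t)\,dt+P(1-x)\leqslant\int_x^1 s(t)\,dt+P(1-x)=u(x).
\]
The final identity follows from $u(1)=0$ and $v(1)-v(x)=-\int_x^1 s$. Combining the two intervals with periodicity gives $w\leqslant u$ on $\MM$.

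There is no serious obstacle. The only point needing care is the bookkeeping of the shift by $Px$ and the direction of integration on each subinterval. This direction is dictated by the downward jump of $p$ at $\xi$: the upper branch is tight when propagating from $0$ to the right, and the lower branch is tight when propagating from $1$ to the left. The constraint at $1/2$ is automatically satisfied and plays no role in the upper bound.
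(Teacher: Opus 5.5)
Your proposal is correct and follows essentially the same route as the paper's proof: split $[0,1]$ at $\xi$, integrate the upper branch bound forward from $0$ and the lower branch bound backward from $1$ using only $w(0)=w(1)\leqslant0$, then conclude by periodicity. Your one addition is an explicit check that $u$ itself is admissible, which the paper takes from the construction preceding the lemma.
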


\begin{proof}
Let $w$ be any such subsolution. Then $w(0)=w(1)\leqslant0$.
On $(0,\xi)$, the subsolution inequality gives
$w'\leqslant p-P$ almost everywhere, so
\[
 w(x)\leqslant w(0)+\int_0^x(p(t)-P)\,dt
 \leqslant u(x)
 \qquad\hbox{for $x\in[0,\xi]$.}
\]
On $(\xi,1)$, it instead gives $w'\geqslant p-P$
almost everywhere. Since $u(1)=0$, we obtain
\[
 w(x)\leqslant w(1)-\int_x^1(p(t)-P)\,dt
 \leqslant u(x)
 \qquad\hbox{for $x\in[\xi,1]$.}
\]
Periodicity yields $w\leqslant u$ on $\R$.
\end{proof}

\begin{figure}[htbp]
\centering
\begin{tikzpicture}[>=stealth,line cap=round,font=\small]
 \pgfmathdeclarefunction{pendulumS}{1}{%
  \pgfmathparse{abs(sin(360*#1))*(1-0.5*sin(360*#1))}}
 \pgfmathdeclarefunction{pendulumJ}{1}{%
  \pgfmathparse{#1<=0.5 ?
   (1-cos(360*#1))/(2*pi)-#1/4+sin(720*#1)/(16*pi) :
   1/pi-1/8+(cos(360*#1)+1)/(2*pi)
    +(#1-0.5)/4-sin(720*#1)/(16*pi)}}
 \pgfmathsetmacro{\pendulumXi}{0.864619016974}
 \begin{scope}[x=5.4cm,y=1.08cm]
  \fill[gray!13]
   plot[domain=0:1,samples=121] (\x,{pendulumS(\x)})
   -- plot[domain=1:0,samples=121] (\x,{-pendulumS(\x)})
   -- cycle;
  \draw[->] (-0.045,0) -- (1.075,0) node[right] {$x$};
  \draw[->] (0,-1.72) -- (0,1.83) node[above] {$p$};
  \draw[gray] plot[domain=0:1,samples=121]
   (\x,{pendulumS(\x)});
  \draw[gray] plot[domain=0:1,samples=121]
   (\x,{-pendulumS(\x)});
  \draw[very thick,blue!65!black]
   plot[domain=0:0.5,samples=61] (\x,{pendulumS(\x)})
   plot[domain=0.5:\pendulumXi,samples=61]
    (\x,{pendulumS(\x)});
  \draw[very thick,blue!65!black]
   plot[domain=\pendulumXi:1,samples=41]
    (\x,{-pendulumS(\x)});
  \draw[->,densely dashed,blue!65!black]
   (\pendulumXi,{pendulumS(\pendulumXi)}) --
   (\pendulumXi,{-pendulumS(\pendulumXi)});
  \node[below left] at (0,0) {$0$};
  \node[below] at (0.5,-0.04) {$1/2$};
  \node[below] at (1,-0.04) {$1$};
  \node[above right,inner sep=2pt] at (\pendulumXi,0) {$\xi$};
  \node at (0.25,-0.23) {$2A$};
  \node at (0.70,-0.34) {$2B$};
  \node[above] at (0.72,1.52) {$p=s(x)$};
  \node[below] at (0.72,-1.52) {$p=-s(x)$};
  \node[blue!65!black,above] at (0.25,0.53) {$p(x)$};
 \end{scope}
 \begin{scope}[shift={(7.2cm,-1.60cm)},x=4.7cm,y=5.0cm]
  \draw[->] (-0.04,0) -- (1.08,0) node[right] {$x$};
  \draw[->] (0,-0.02) -- (0,0.73);
  \draw[densely dotted,gray] (0.5,0) -- (0.5,0.25);
  \draw[densely dotted,gray] (1,0) -- (1,{2/pi});
  \draw[thick,dashed] (0,0) -- (1,0.5);
  \draw[very thick,blue!65!black]
   plot[domain=0:0.5,samples=61] (\x,{pendulumJ(\x)})
   plot[domain=0.5:\pendulumXi,samples=61]
    (\x,{pendulumJ(\x)});
  \draw[very thick,blue!65!black]
   plot[domain=\pendulumXi:1,samples=41]
    (\x,{0.5+2/pi-pendulumJ(\x)});
  \draw[gray,densely dotted,thick]
   plot[domain=\pendulumXi:1,samples=41]
    (\x,{pendulumJ(\x)});
  \fill[blue!65!black] (0.5,{1/pi-1/8}) circle[radius=1.3pt];
  \fill (0.5,0.25) circle[radius=1.3pt];
  \fill[blue!65!black] (1,0.5) circle[radius=1.3pt];
  \fill[gray] (1,{2/pi}) circle[radius=1.3pt];
  \node[below left] at (0,0) {$0$};
  \node[below] at (0.5,0) {$1/2$};
  \node[below] at (1,0) {$1$};
  \node[below right,inner sep=2pt] at (0.5,{1/pi-1/8}) {$A$};
  \node[above left,inner sep=2pt] at (0.5,0.25) {$P/2$};
  \node[right,inner sep=3pt] at (1,{2/pi}) {$A+B$};
  \node[right,inner sep=3pt] at (1,0.5) {$P$};
  \node[blue!65!black,above left] at (0.73,{pendulumJ(0.73)}) {$v$};
  \node[below right] at (0.77,0.385) {$Px$};
  \node[gray,above] at (0.91,0.63) {$J$};
 \end{scope}
\end{tikzpicture}
\caption{The data of Remark~\ref{rem:pendulum-data}.
Left: the zero-energy level and the function $p=v'$
(blue), with a downward jump at $\xi$ and mean $P$.
The two enclosed regions have areas $2A<2B$.
Right: its primitive $v$, normalized by $v(0)=0$, and the affine
function $Px$. They agree at $x=0,1$, while $v(1/2)=A<P/2$.
The dotted continuation is $J(x):=\int_0^x s(t)\,dt$, with
$J(1)=A+B>P$.}
\label{fig:pendulum}
\end{figure}

We next increase the action across the smaller region by a
compactly supported perturbation. Choose
$V\in\CC_c^\infty((0,1/2))$, $V\geqslant0$, such that
\begin{equation}\label{eq:pendulum-barrier}
 \widehat A:=\int_0^{1/2}\sqrt{2(W(x)+V(x))}\,dx>P/2.
\end{equation}
Set
\begin{equation}\label{eq:pendulum-G}
 \widehat W:=W+V,\qquad
 G(x,p):=H(x,p)-V(x)=\tfrac12(p+P)^2-\widehat W(x).
\end{equation}
The zero set of $\widehat W$ is still $Z$.

\begin{lemma}\label{lem:pendulum-measures}
The Hamiltonians \eqref{eq:pendulum-H} and \eqref{eq:pendulum-G}
satisfy
\[
 c_f(G)=c(G)=c_f(H)=c(H)=0.
\]
For either Hamiltonian, the whole-space Mather probabilities are
exactly the probabilities supported on $Z\times\{0\}$.
In particular, the whole-space constraints
\eqref{eq:Mather-norm} are equivalent to
\begin{equation}\label{eq:pendulum-norm}
 w(z)\leqslant0\qquad\hbox{for every $z\in Z$.}
\end{equation}
Finally, the periodic Mather measures for $H$ are exactly the
convex combinations of $\delta_{(0,0)}$ and $\delta_{(1/2,0)}$
on $\TT^1\times\R$.
\end{lemma}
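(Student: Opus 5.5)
We prove the four identities, then identify the Mather sets. For the critical values, note first that $L_H(x,q)=\tfrac12q^2-Pq+W(x)$ and $L_G=L_H+V$. On the periodic side, $c(H)=c_{\mathcal H}(P)=0$ by \eqref{eq:pendulum-P} and the discussion above. For $G$, since $V\geqslant0$, Proposition~\ref{prop:corder} gives $c(G)=c(H)=0$.

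For the free values it suffices to show that no subsolution exists at a negative level $a<0$, for either Hamiltonian. This follows from Proposition~\ref{prop:metric}(ii)–(iii) applied at $y=x=0$. The constant curve at $0$ has velocity zero, and $L_F(0,0)=W(0)+V(0)=0$. Since $S_a^F(0,0)\leqslant t\cdot a\to-\infty$ as $t\to+\infty$, we get $S_a^F\equiv-\infty$, so $a<c_f(F)$ is forced. Together with $c_f\leqslant c$, this gives $c_f(G)=c(G)=c_f(H)=c(H)=0$.

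For the whole-space Mather measures, observe first that every probability on $Z\times\{0\}$ is closed (velocity zero). Its $L_F$–action vanishes, because $L_F(z,0)=\widehat W(z)$ or $W(z)$, which is $0$ on $Z$. Since the infimum of the action is $-c_f=0$ by Theorem~\ref{thm:free-action}, such measures are Mather.

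The converse is the main step. Let $\tilde\mu$ be closed with zero action. I would test closedness with $\varphi(x)=\arctan x$ and with smooth bounded functions $\varphi$ whose derivatives are nonnegative cutoffs. The key point is to exploit a strict critical subsolution in $q$: take a Lipschitz $w$ with $w'=-P+\theta(x)s(x)$, where $\theta$ is chosen so that $w$ is bounded. Rather than a general construction, I will use the explicit inequality
\[
L_H(x,q)\geqslant\tfrac12q^2-Pq+\tfrac12s(x)^2\geqslant \ldots,
\]
and argue via Fenchel with the two smooth subsolutions $w_\pm$, $w_\pm'=-P\pm s$. These are unbounded but Lipschitz, so they are admissible by Proposition~\ref{prop:closed-tests}(ii).

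Integrating $L_H\geqslant\langle w_\pm',q\rangle$ against $\tilde\mu$ gives equality $\tilde\mu$–a.e. for both. Hence, $\tilde\mu$–a.e., $q$ simultaneously maximizes $p\mapsto pq-H(x,p)$ at $p=-P+s$ and at $p=-P-s$. Where $s(x)>0$ these are distinct points of the zero level, and strict convexity of $H$ forces the maximizer to be unique (it equals $q-P$). This is a contradiction unless $s(x)=0$, so $\tilde\mu$ is concentrated on $\{s=0\}=Z$. Then zero action gives $\tfrac12q^2-Pq=0$, that is, $q\in\{0,2P\}$.

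To rule out $q=2P$ at points of $Z$, use closedness with $\varphi_k(x)=\eta_k(x)\arctan x$ localized near each $z\in Z$, exactly as in the proof of Proposition~\ref{prop:drift}(ii). This yields $\int_{\{x=z\}} q\,d\tilde\mu=0$, and since $q\in\{0,2P\}$ with $P>0$, the mass at $q=2P$ vanishes. For $G$ the same argument applies, since $L_G\geqslant L_H$ and both minima are $0$.

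The equivalence of \eqref{eq:Mather-norm} with \eqref{eq:pendulum-norm} follows by testing against the Diracs $\delta_z$. Finally, the periodic statement is the same argument run on $\TT^1\times\R$, where $Z$ projects to $\{0,1/2\}$. The closedness test used to remove $q=2P$ must be replaced there: a periodic test cannot use $\arctan$. Instead, a measure at $(z,2P)$ alone is not closed because $\int\varphi'(z)\,2P\,d\tilde\mu=0$ must hold for every periodic $\varphi$, and $\varphi'$ can be chosen positive at $0$ and at $1/2$.
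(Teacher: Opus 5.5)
Your handling of the critical values is fine. Deriving $c_f\geqslant0$ from the constant curve at a point of $Z$ is a valid and more direct alternative to the paper's appeal to Theorem~\ref{thm:free-action}. The whole-space part also works: $w_\pm\in\CC^1(\R)\cap\Lip(\R)$ are admissible tests by Proposition~\ref{prop:closed-tests}, and $L_H(x,q)-w_\pm'(x)q=\tfrac12\bigl(q\mp s(x)\bigr)^2$. However, the step ``zero action gives $\tfrac12q^2-Pq=0$'' is a non sequitur. Indeed, $L_H(z,q)<0$ for $0<q<2P$, and the vanishing of an integral says nothing pointwise. You do not need that step. The two equality cases already give $q=s(x)$ and $q=-s(x)$ for $\tilde\mu$-a.e.\ $(x,q)$, hence $q=0$ directly, so the localized $\arctan$ tests and the discussion of $q=2P$ can be dropped.

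The genuine gap is the periodic statement. You say it follows by running the same argument on $\TT^1\times\R$, adjusting only the $\arctan$ test. But the argument rests on $w_\pm$, which are not periodic: $\int_0^1w_\pm'\,dx=-P\pm(A+B)\neq0$. Closedness on the torus only annihilates $\int f(x)q\,d\tilde\nu$ when $f=\varphi'$ for a periodic $\varphi$, i.e.\ when $f$ has zero mean. For instance, the uniform measure on $\TT^1\times\{1\}$ is closed with $\int q\,d\tilde\nu=1$. So neither Fenchel inequality integrates to zero there. A sanity check confirms that something must break. Your torus argument never uses the upper bound $P<A+B$ in \eqref{eq:pendulum-P}, yet it would give $\int L_H\,d\tilde\nu\geqslant0$ for every closed probability on $\TT^1\times\R$, i.e.\ $c(H)\leqslant0$, which is false for $P>A+B$.

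The missing idea is the one you mentioned and then set aside, $w'=-P+\theta s$, with the constant choice $\theta=\rho:=P/(A+B)\in(0,1)$. The function $\phi:=\tfrac{1+\rho}{2}w_++\tfrac{1-\rho}{2}w_-$ has $\phi'=\rho s-P$ with zero mean, so it is periodic, and
\[
 L_H(x,q)-\phi'(x)q=\tfrac{1+\rho}{4}(q-s)^2+\tfrac{1-\rho}{4}(q+s)^2
 =\tfrac12(q-\rho s)^2+(1-\rho^2)W .
\]
This vanishes exactly on $Z\times\{0\}$ because $\rho<1$. This single test is what the paper uses, with the extra nonnegative term $V$ for $G$ (which vanishes on $Z$). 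It treats the whole-space measures for $H$ and $G$ and the periodic measures for $H$ at once.
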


\begin{proof}
We have $c(H)=0$ by construction, and $c(G)=0$ by
Proposition~\ref{prop:corder}.
Let $\rho:=P/(A+B)\in(0,1)$ and choose the $1$--periodic
$\CC^1$ function $\phi$ with $\phi(0)=0$ and
$\phi'(x)=\rho s(x)-P$.
For $F(x,p)=\tfrac12(p+P)^2-Q(x)$, with $Q=W$ or
$Q=\widehat W$, the Lagrangian is
\[
 L_F(x,q)=\tfrac12q^2-Pq+Q(x).
\]
Recalling that $s^2(x)=2W(x)$, we have 
\begin{equation*}
 L_F(x,q)-\phi'(x)q
 =\tfrac12(q-\rho s(x))^2+(1-\rho^2)W(x)+(Q(x)-W(x)).
\end{equation*}
The right-hand side is nonnegative and vanishes exactly on
$Z\times\{0\}$. Since closedness annihilates the integral of
$\phi'(x)q$, every whole-space closed probability has
nonnegative action, with equality if and only if it is
concentrated on that set. Conversely, every probability
supported there is closed and has zero action.
The infimum of the action is therefore zero, and
Theorem~\ref{thm:free-action} gives $c_f(H)=c_f(G)=0$.
For $Q=W$, the same identity on the torus characterizes the
periodic Mather measures. The description of the whole-space measures
also gives \eqref{eq:pendulum-norm}.
\end{proof}

By \cite[Proposition~1.3]{DFIZ1}, the periodic discounted limit
$u_H^0$ is the largest $1$--periodic critical subsolution whose
averages against all periodic Mather measures are nonpositive.
By Lemma~\ref{lem:pendulum-measures}, these constraints are
equivalent to being nonpositive on $Z$.
Lemma~\ref{lem:pendulum-maximal} therefore gives $u_H^0=u$.
In particular,
\begin{equation}\label{eq:pendulum-Hvalues}
 u_H^0(0)=0,\qquad
 u_H^0(1/2)=A-P/2<0.
\end{equation}
Thus the periodic Mather measure $\delta_{(1/2,0)}$ has
strictly negative average of $u_H^0$, so
\eqref{eq:zeroaverages} fails.

We now turn to the perturbed problem.
For $Q=W$ or $Q=\widehat W$, the critical semidistance of
$F(x,p)=\tfrac12(p+P)^2-Q(x)$ is
\begin{equation*}
 S_Q(y,x):=\left|\int_y^x\sqrt{2Q(t)}\,dt\right|-P(x-y).
\end{equation*}
Indeed, a Lipschitz function $w$ is a critical subsolution if and only if
\[
 -P-\sqrt{2Q(x)}\leqslant w'(x)\leqslant-P+\sqrt{2Q(x)}
 \qquad\hbox{for a.e. $x\in\R$.}
\]
Integration gives $w(x)-w(y)\leqslant S_Q(y,x)$.
For fixed $y$ and $x$, equality is attained by a primitive of
$-P+\sqrt{2Q}$ if $x\geqslant y$, and of $-P-\sqrt{2Q}$ if
$x\leqslant y$. The formula therefore follows from the maximality
property in Proposition~\ref{prop:metric}.

By Lemma~\ref{lem:pendulum-measures}, all the critical values
coincide with the free critical values and are zero.
Moreover, since $H$ is Tonelli, condition (U) in
\cite{CDD23} holds by Proposition~6.4 therein.
We may therefore apply \cite[Theorems~6.7 and 6.10]{CDD23}
to conclude that the discounted solutions $u_G^\lambda$
converge locally uniformly to
\begin{equation}\label{eq:pendulum-limits}
 u_G^0(x)=\inf_{z\in Z}S_{\widehat W}(z,x).
\end{equation}
For the perturbed problem, sources to the left and to the
right of $1/2$ satisfy, respectively,
\[
 \begin{aligned}
 S_{\widehat W}(z,1/2)
 =S_W(z,0)+\widehat A-P/2
 &\geqslant u_H^0(0)-u_H^0(z)+\widehat A-P/2\\
 &\geqslant\widehat A-P/2
 \qquad
 \FORALL z\in Z\ \hbox{with}\  z\leqslant 0,\\
 S_{\widehat W}(z,1/2)
 =S_W(z,1)+B+P/2
 &\geqslant u_H^0(1)-u_H^0(z)+B+P/2\\
 &\geqslant B+P/2
 \qquad
 \FORALL z\in Z\ \hbox{with}\  z\geqslant 1,
 \end{aligned}
\]
where the inequalities follow from domination for $u_H^0$,
since $u_H^0(0)=u_H^0(1)=0$ and $u_H^0\leqslant0$ on $Z$.
Both lower bounds are positive,
whereas the source $1/2$ has zero cost. Hence
\begin{equation}\label{eq:pendulum-Gvalue}
 u_G^0(1/2)=0.
\end{equation}

It remains to compare this value with $\underline u_G(1/2)$.
The semigroup construction in Theorem~\ref{thm:pos-existence}
makes this comparison immediate.

\begin{prop}\label{prop:failed-inf}
For the Hamiltonians \eqref{eq:pendulum-H} and
\eqref{eq:pendulum-G}, one has
\begin{equation}\label{eq:failed-inf}
 \underline u_G(1/2)=A-P/2<0=u_G^0(1/2).
\end{equation}
Thus \eqref{eq:pos-selected} fails, even if the candidates
in \eqref{eq:pos-envelope} are additionally required to
satisfy all the whole-space Mather constraints.
\end{prop}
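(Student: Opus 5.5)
Since $u_G^0(1/2)=0$ is already \eqref{eq:pendulum-Gvalue}, the task is to compute $\underline u_G(1/2)$. We know $\underline u_G$ is the least element of $\sol_+$, namely the function $w_G$ produced by Theorem~\ref{thm:pos-existence} from $u_H=u_H^0$. That function is the monotone limit of $U(t,\cdot)=\S_G(t)u_H^0$, and $U(t,\cdot)\geqslant u_H^0$. It therefore suffices to show $U(t,1/2)\leqslant u_H^0(1/2)=A-P/2$ for every $t>0$. The reverse inequality $\underline u_G\geqslant u_H^0$ is built in, so this gives the equality $\underline u_G(1/2)=A-P/2$. The strict inequality $A-P/2<0$ is \eqref{eq:pendulum-Hvalues}, since $P>2A$.

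To bound $U(t,1/2)$, I would use the constant curve $\gamma\equiv1/2$ on $[-t,0]$ as a competitor in the Lax--Oleinik formula. Since $1/2\in Z$, and $V$ is supported in $(0,1/2)$ and hence vanishes at $1/2$, we have $\widehat W(1/2)=0$. The Lagrangian computed in Lemma~\ref{lem:pendulum-measures} gives $L_G(1/2,0)=\widehat W(1/2)=0$, and $c=0$. Therefore
\[
U(t,1/2)\leqslant u_H^0(1/2)+\int_{-t}^0 L_G(1/2,0)\,ds=u_H^0(1/2).
\]
Letting $t\to+\infty$ yields $\underline u_G(1/2)\leqslant u_H^0(1/2)$.

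For the final assertion, \eqref{eq:pos-envelope-Mather} already shows that $\underline u_G$ satisfies the Mather constraints; here these reduce to $\underline u_G\leqslant0$ on $Z$ by Lemma~\ref{lem:pendulum-measures}. Imposing \eqref{eq:Mather-norm} on the candidates in \eqref{eq:pos-envelope} therefore leaves the infimum unchanged. In particular, the infimum stays equal to $A-P/2$ at $1/2$ and differs from $u_G^0(1/2)=0$.

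The only delicate point is the identification of $\underline u_G$ with the semigroup limit, which Theorem~\ref{thm:pos-existence} supplies as the least element of $\sol_+$. Beyond that, the argument only requires checking that $1/2$ lies outside $\supp V$, so that the stationary curve there has zero cost.
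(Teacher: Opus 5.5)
Your proposal is correct and follows essentially the same route as the paper's proof. You identify $\underline u_G$ with the semigroup limit $\lim_{t\to+\infty}\S_G(t)u_H^0$ from Theorem~\ref{thm:pos-existence} and bound $(\S_G(t)u_H^0)(1/2)$ using the constant curve at $1/2$, where $L_G(1/2,0)=\widehat W(1/2)=0$. You then handle the Mather constraints via \eqref{eq:pos-envelope-Mather}; the paper does the same, phrased for all $z\in Z$ at once.
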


\begin{proof}
The proof of Theorem~\ref{thm:pos-existence} gives
\[
 \underline u_G=\lim_{t\to+\infty}\S_G(t)u_H^0.
\]
For every $z\in Z$, we have $L_G(z,0)=0$. Using the constant
curve at $z$ in the definition of $\S_G(t)$, together with
\eqref{eq:pos-LO-monotone}, yields
\[
 u_H^0(z)\leqslant\bigl(\S_G(t)u_H^0\bigr)(z)
 \leqslant u_H^0(z)+tL_G(z,0)=u_H^0(z).
\]
Passing to the limit gives $\underline u_G(z)=u_H^0(z)$
for every $z\in Z$. Taking $z=1/2$ and using
\eqref{eq:pendulum-Hvalues} and \eqref{eq:pendulum-Gvalue}
proves \eqref{eq:failed-inf}.
Finally, by \eqref{eq:pos-envelope-Mather}, adding the Mather
constraints to the candidates does not change the infimum.
\end{proof}

\begin{oss}\label{rem:pendulum-data}
A concrete choice of the periodic data is
\[
 W(x)=\tfrac12\sin^2(2\pi x)
             \bigl(1-\tfrac12\sin(2\pi x)\bigr)^2,
 \qquad P=\tfrac12.
\]
In this case $A=1/\pi-1/8$, $B=1/\pi+1/8$, so
\eqref{eq:pendulum-P} holds and
$\underline u_G(1/2)=1/\pi-3/8<0=u_G^0(1/2)$
for every perturbation chosen as in \eqref{eq:pendulum-barrier}.
\end{oss}

These examples exhibit two distinct difficulties.
In the first, whole-space Mather probabilities are absent even
though all the critical values agree. The Mather constraints
are therefore vacuous, and the corresponding supremum is
identically $+\infty$.
In the second, such measures exist, but the least bounded
critical solution above $u_H^0$ differs from the selected
limit, despite satisfying all the Mather constraints.

The first example suggests that a general selection principle
for nonnegative potentials must also account for information
carried by minimizing curves that escape to infinity.
Such information already enters the one-dimensional characterization
when $c(G)=c(H)>c_f(H)$; see \cite[Theorem~2(II)]{CDD23}.
In that regime, the admissible subsolutions satisfy an additional
comparison with $u_H^0$ on one exterior half-line, chosen according
to the direction of the minimizing dynamics.
In higher dimensions, the periodic measures associated with the
escaping component retain information that the whole-space
Mather measures alone cannot record.

Agreement with $u_H^0$ outside a compact set is not, by itself,
a sufficient selection condition.
In the double-pendulum example, $u_G^0=u_H^0$ outside a
compact interval. Indeed, the construction of $u_H^0=u$ gives,
for every $x$, an integer $z$ with $|z-x|\leqslant1$ such that
$u_H^0(x)=S_W(z,x)$. For $|x|$ large, the interval joining
$z$ to $x$ avoids $\supp(V)$, so
$S_W(z,x)=S_{\widehat W}(z,x)$.
Formula~\eqref{eq:pendulum-limits} then gives
$u_G^0(x)\leqslant u_H^0(x)$, while comparison gives the
reverse inequality.
Since $u_H^0\leqslant\underline u_G\leqslant u_G^0$ by
\eqref{eq:pos-envelope-bound}, we also have
$\underline u_G=u_H^0$ outside the same compact interval.
Thus $u_G^0$ and $\underline u_G$ both satisfy the Mather
constraints and agree with $u_H^0$ outside a compact interval,
yet they differ at $1/2$. These requirements therefore do
not suffice to identify the selected limit.

Both examples have convergent discounted solutions; the failure
concerns the proposed characterization of their limit.
The results of this paper do not settle the general
multidimensional convergence problem for $V\geqslant0$
under the standing assumptions.
Theorem~\ref{thm:positive} requires the additional condition
\eqref{eq:zeroaverages}, which fails in the double-pendulum
example.

\section*{Acknowledgements}

The author began working on this problem in October 2017.
His earlier work had settled most of the case of nonpositive
potentials, but a substantial part of the equal-critical-value
regime remained unresolved.
The strategy of separating retained and escaping mass was
already part of this approach and was later implemented
in the one-dimensional work \cite[Section~3.2]{CDD23}.
Its implementation in higher dimensions, however,
remained a major difficulty.

The author gratefully acknowledges discussions on this problem
with Gonzalo Contreras, Albert Fathi and Hitoshi Ishii during
his stay at the Mathematical Sciences Research Institute in
Berkeley, California, from September to December 2018
(supported by the National Science Foundation under
Grant No.~DMS-1440140).
He also benefited from further discussions with Albert Fathi
and Antonio Siconolfi during Fathi's visits to Sapienza
University of Rome in May 2022 and May 2023
(supported through the 2021 and 2022 Sapienza Visiting
Professor calls, respectively), and in February 2026
(supported by the Dipartimento di Eccellenza project,
CUP~B83C23001390001).

On 24 September 2026, the author presented the remaining case
to ChatGPT (OpenAI, GPT-5 Astra, Medium reasoning setting),
which provided an argument resolving it.
He subsequently suggested seeking a common treatment of
$c(G)>c(H)$ and $c(G)=c(H)$ and asked ChatGPT
(GPT-6 Astra, Max reasoning setting) to develop his unpublished
draft into an article along these lines.
The resulting draft included a unified selection proof and
a common compactness argument; the author subsequently
revised and developed these arguments.

The specific contribution arising from these exchanges with
ChatGPT was the implementation of the strategy of separating
retained and escaping mass presented in
Proposition~\ref{prop:decomp}.

The compactness conclusions of Proposition~\ref{prop:compact}
had already been established by the author through separate
arguments in the two critical-value regimes.
The revised proof provides a common argument and an explicit
local upper bound, simplifying the earlier analysis.

The results for nonnegative potentials in
Theorem~\ref{thm:pos-existence}, Proposition~\ref{prop:pos-bdd}
and Section~\ref{sec:positive} are due to the author.
The counterexample to the supremum characterization in
Section~\ref{subsec:drift} was proposed by ChatGPT.
The counterexample to the infimum characterization in
Section~\ref{subsec:1d} builds on a periodic double-pendulum
construction suggested by Maxime Zavidovique, whom the
author warmly thanks.
The compactly supported perturbation yielding this
counterexample was developed in subsequent exchanges
with ChatGPT.

The author takes full responsibility for the mathematical
content of this paper and for any errors or omissions.

\bibliographystyle{siam}
\bibliography{discount}

@book {bardi,
    AUTHOR = {Bardi, Martino and Capuzzo-Dolcetta, Italo},
     TITLE = {Optimal control and viscosity solutions of
              {H}amilton-{J}acobi-{B}ellman equations},
    SERIES = {Systems \& Control: Foundations \& Applications},
      NOTE = {With appendices by Maurizio Falcone and Pierpaolo Soravia},
 PUBLISHER = {Birkh\"auser Boston Inc.},
   ADDRESS = {Boston, MA},
      YEAR = {1997},
     PAGES = {xviii+570},
      ISBN = {0-8176-3640-4},
   MRCLASS = {49-02 (49J15 49K15 49L25)},
  MRNUMBER = {1484411 (99e:49001)},
MRREVIEWER = {Vladimir Veliov},
       DOI = {10.1007/978-0-8176-4755-1},
       URL = {http://dx.doi.org/10.1007/978-0-8176-4755-1},
}

@ARTICLE{BK22,
       author = {{Bardi}, Martino and {Kouhkouh}, Hicham},
        title = "{An Eikonal equation with vanishing Lagrangian arising in Global Optimization}",
      journal = {arXiv e-prints},
         year = 2022,
        month = feb,
          eid = {arXiv:2202.02561},
        pages = {arXiv:2202.02561},
archivePrefix = {arXiv},
       eprint = {2202.02561},
 primaryClass = {math.OC},
       adsurl = {https://ui.adsabs.harvard.edu/abs/2022arXiv220202561B}
}

@book {barles,
    AUTHOR = {Barles, Guy},
     TITLE = {Solutions de viscosit\'e des \'equations de
              {H}amilton-{J}acobi},
    SERIES = {Math\'ematiques \& Applications (Berlin) [Mathematics \&
              Applications]},
    VOLUME = {17},
 PUBLISHER = {Springer-Verlag},
   ADDRESS = {Paris},
      YEAR = {1994},
     PAGES = {x+194},
      ISBN = {3-540-58422-6},
   MRCLASS = {49L25 (35D05 35F20 35J60)},
  MRNUMBER = {1613876 (2000b:49054)},
MRREVIEWER = {Martino Bardi},
}

@article {CDD23,
    AUTHOR = {Capuzzo Dolcetta, Italo and Davini, Andrea},
     TITLE = {On the vanishing discount approximation for compactly supported
              perturbations of periodic {H}amiltonians: the 1d case},
   JOURNAL = {Comm. Partial Differential Equations},
  FJOURNAL = {Communications in Partial Differential Equations},
    VOLUME = {48},
      YEAR = {2023},
    NUMBER = {4},
     PAGES = {576--622},
       DOI = {10.1080/03605302.2023.2183409},
       URL = {https://doi.org/10.1080/03605302.2023.2183409},
}

@article {CaPo19,
    AUTHOR = {Cardaliaguet, Pierre and Porretta, Alessio},
     TITLE = {Long time behavior of the master equation in mean field game
              theory},
   JOURNAL = {Anal. PDE},
  FJOURNAL = {Analysis \& PDE},
    VOLUME = {12},
      YEAR = {2019},
    NUMBER = {6},
     PAGES = {1397--1453},
      ISSN = {2157-5045},
   MRCLASS = {35Q91 (35B40 35F21 35G50 37N40 91A13)},
  MRNUMBER = {3921309},
MRREVIEWER = {Abderrahmane Habbal},
       DOI = {10.2140/apde.2019.12.1397},
       URL = {https://doi.org/10.2140/apde.2019.12.1397},
}

@article{Chen21,
    AUTHOR = {Chen, Qinbo},
     TITLE = {Convergence of solutions of {H}amilton--{J}acobi equations
              depending nonlinearly on the unknown function},
   JOURNAL = {Adv. Calc. Var.},
  FJOURNAL = {Advances in Calculus of Variations},
    VOLUME = {16},
      YEAR = {2023},
    NUMBER = {1},
     PAGES = {45--68},
      ISSN = {1864-8258},
       DOI = {10.1515/acv-2020-0089},
       URL = {https://doi.org/10.1515/acv-2020-0089},
}

@article {CCIsZh19,
    AUTHOR = {Chen, Qinbo and Cheng, Wei and Ishii, Hitoshi and Zhao, Kai},
     TITLE = {Vanishing contact structure problem and convergence of the
              viscosity solutions},
   JOURNAL = {Comm. Partial Differential Equations},
  FJOURNAL = {Communications in Partial Differential Equations},
    VOLUME = {44},
      YEAR = {2019},
    NUMBER = {9},
     PAGES = {801--836},
      ISSN = {0360-5302},
   MRCLASS = {35F21 (35D40 37J50 49L25)},
  MRNUMBER = {3952779},
MRREVIEWER = {Piotr Rybka},
       DOI = {10.1080/03605302.2019.1608561},
       URL = {https://doi.org/10.1080/03605302.2019.1608561},
}

@book {ConItu,
    AUTHOR = {Contreras, Gonzalo and Iturriaga, Renato},
     TITLE = {Global minimizers of autonomous {L}agrangians},
    SERIES = {22$^{\rm o}$ Col\'oquio Brasileiro de Matem\'atica. [22nd
              Brazilian Mathematics Colloquium]},
 PUBLISHER = {Instituto de Matem\'atica Pura e Aplicada (IMPA), Rio de
              Janeiro},
      YEAR = {1999},
     PAGES = {148},
      ISBN = {85-244-0151-6},
   MRCLASS = {37J50 (37J45 53D25)},
  MRNUMBER = {1720372 (2001j:37113)},
MRREVIEWER = {Victor Bangert},
}

@article {DaLi21,
    AUTHOR = {Davini, Andrea and Wang, Lin},
     TITLE = {On the vanishing discount problem from the negative direction},
   JOURNAL = {Discrete Contin. Dyn. Syst.},
  FJOURNAL = {Discrete and Continuous Dynamical Systems. Series A},
    VOLUME = {41},
      YEAR = {2021},
    NUMBER = {5},
     PAGES = {2377--2389},
      ISSN = {1078-0947},
   MRCLASS = {35F21 (37J51)},
  MRNUMBER = {4225917},
       DOI = {10.3934/dcds.2020368},
       URL = {https://doi.org/10.3934/dcds.2020368},
}

@article {DFIZ1,
    AUTHOR = {Davini, Andrea and Fathi, Albert and Iturriaga, Renato and
              Zavidovique, Maxime},
     TITLE = {Convergence of the solutions of the discounted
              {H}amilton-{J}acobi equation: convergence of the discounted
              solutions},
   JOURNAL = {Invent. Math.},
  FJOURNAL = {Inventiones Mathematicae},
    VOLUME = {206},
      YEAR = {2016},
    NUMBER = {1},
     PAGES = {29--55},
      ISSN = {0020-9910},
   MRCLASS = {35B27 (35B25 35D40 35F21 37J50 49L25)},
  MRNUMBER = {3556524},
MRREVIEWER = {Adina Ciomaga},
       DOI = {10.1007/s00222-016-0648-6},
       URL = {https://doi.org/10.1007/s00222-016-0648-6},
}

@article {DS,
    AUTHOR = {Davini, Andrea and Siconolfi, Antonio},
     TITLE = {A generalized dynamical approach to the large time behavior of
              solutions of {H}amilton-{J}acobi equations},
   JOURNAL = {SIAM J. Math. Anal.},
  FJOURNAL = {SIAM Journal on Mathematical Analysis},
    VOLUME = {38},
      YEAR = {2006},
    NUMBER = {2},
     PAGES = {478--502 (electronic)},
      ISSN = {0036-1410},
   MRCLASS = {49L25 (35B40 35F20 49K40)},
  MRNUMBER = {2237158 (2007d:49047)},
MRREVIEWER = {Fabio Bagagiolo},
       DOI = {10.1137/050621955},
       URL = {http://dx.doi.org/10.1137/050621955},
}

@article {DZ21,
    AUTHOR = {Davini, Andrea and Zavidovique, Maxime},
     TITLE = {Convergence of the solutions of discounted {H}amilton-{J}acobi
              systems},
   JOURNAL = {Adv. Calc. Var.},
  FJOURNAL = {Advances in Calculus of Variations},
    VOLUME = {14},
      YEAR = {2021},
    NUMBER = {2},
     PAGES = {193--206},
      ISSN = {1864-8258},
   MRCLASS = {35B40 (37J51 49L25)},
  MRNUMBER = {4236200},
       DOI = {10.1515/acv-2018-0037},
       URL = {https://doi.org/10.1515/acv-2018-0037},
}

@article {DZ10,
    AUTHOR = {Davini, Andrea and Zavidovique, Maxime},
     TITLE = {Weak {KAM} theory for nonregular commuting {H}amiltonians},
   JOURNAL = {Discrete Contin. Dyn. Syst. Ser. B},
  FJOURNAL = {Discrete and Continuous Dynamical Systems. Series B. A Journal
              Bridging Mathematics and Sciences},
    VOLUME = {18},
      YEAR = {2013},
    NUMBER = {1},
     PAGES = {57--94},
      ISSN = {1531-3492},
   MRCLASS = {37J40},
  MRNUMBER = {2982007},
       DOI = {10.3934/dcdsb.2013.18.57},
       URL = {http://dx.doi.org/10.3934/dcdsb.2013.18.57},
}

@article{II09,
    AUTHOR = {Ichihara, Naoyuki and Ishii, Hitoshi},
     TITLE = {Long-time behavior of solutions of {Hamilton--Jacobi}
              equations with convex and coercive {Hamiltonians}},
   JOURNAL = {Arch. Ration. Mech. Anal.},
    VOLUME = {194},
    NUMBER = {2},
      YEAR = {2009},
     PAGES = {383--419},
       DOI = {10.1007/s00205-008-0170-0},
}

@incollection{Fat24,
    AUTHOR = {Fathi, Albert},
     TITLE = {Viscosity solutions of the {Hamilton--Jacobi} equation
              on a noncompact manifold},
 BOOKTITLE = {Hamiltonian Systems: Dynamics, Analysis, Applications},
    SERIES = {Mathematical Sciences Research Institute Publications},
    VOLUME = {72},
 PUBLISHER = {Cambridge University Press},
      YEAR = {2024},
     PAGES = {111--172},
}

@article {fatmad,
    AUTHOR = {Fathi, Albert and Maderna, Ezequiel},
     TITLE = {Weak {KAM} theorem on non compact manifolds},
   JOURNAL = {NoDEA Nonlinear Differential Equations Appl.},
  FJOURNAL = {NoDEA. Nonlinear Differential Equations and Applications},
    VOLUME = {14},
      YEAR = {2007},
    NUMBER = {1-2},
     PAGES = {1--27},
      ISSN = {1021-9722},
     CODEN = {NDEAFL},
   MRCLASS = {49L25 (37J15 37J40)},
  MRNUMBER = {2346451 (2009h:49050)},
       DOI = {10.1007/s00030-007-2047-6},
       URL = {http://dx.doi.org/10.1007/s00030-007-2047-6},
}

@article {FS05,
    AUTHOR = {Fathi, Albert and Siconolfi, Antonio},
     TITLE = {P{DE} aspects of {A}ubry-{M}ather theory for quasiconvex
              {H}amiltonians},
   JOURNAL = {Calc. Var. Partial Differential Equations},
  FJOURNAL = {Calculus of Variations and Partial Differential Equations},
    VOLUME = {22},
      YEAR = {2005},
    NUMBER = {2},
     PAGES = {185--228},
      ISSN = {0944-2669},
   MRCLASS = {35F20 (37J50 49K10)},
  MRNUMBER = {2106767 (2006f:35023)},
MRREVIEWER = {Andrzej Swiech},
       DOI = {10.1007/s00526-004-0271-z},
       URL = {http://dx.doi.org/10.1007/s00526-004-0271-z},
}

@article {Is21,
    AUTHOR = {Ishii, Hitoshi},
     TITLE = {The vanishing discount problem for monotone systems of
              {H}amilton-{J}acobi equations. {P}art 1: linear coupling},
   JOURNAL = {Math. Eng.},
  FJOURNAL = {Mathematics in Engineering},
    VOLUME = {3},
      YEAR = {2021},
    NUMBER = {4},
     PAGES = {Paper No. 032, 21},
   MRCLASS = {35F21 (37N40 49L25 70H20)},
  MRNUMBER = {4147576},
MRREVIEWER = {Vincenzo Basco},
       DOI = {10.3934/mine.2021032},
       URL = {https://doi.org/10.3934/mine.2021032},
}

@article {IsLi20,
    AUTHOR = {Ishii, Hitoshi and Jin, Liang},
     TITLE = {The vanishing discount problem for monotone systems of
              {H}amilton-{J}acobi equations: part 2---nonlinear coupling},
   JOURNAL = {Calc. Var. Partial Differential Equations},
  FJOURNAL = {Calculus of Variations and Partial Differential Equations},
    VOLUME = {59},
      YEAR = {2020},
    NUMBER = {4},
     PAGES = {Paper No. 140, 28},
      ISSN = {0944-2669},
   MRCLASS = {35F21 (35B40 35F50 49L25)},
  MRNUMBER = {4135637},
MRREVIEWER = {Vincenzo Basco},
       DOI = {10.1007/s00526-020-01768-8},
       URL = {https://doi.org/10.1007/s00526-020-01768-8},
}

@article{IMTa17,
AUTHOR = {Ishii, Hitoshi and Mitake, Hiroyoshi and Tran, Hung V.},
     TITLE = {The vanishing discount problem and viscosity {M}ather
              measures. {P}art 1: {T}he problem on a torus},
   JOURNAL = {J. Math. Pures Appl. (9)},
  FJOURNAL = {Journal de Math\'ematiques Pures et Appliqu\'ees. Neuvi\`eme S\'erie},
    VOLUME = {108},
      YEAR = {2017},
    NUMBER = {2},
     PAGES = {125--149},
      ISSN = {0021-7824},
   MRCLASS = {35J60 (35B40 35J70 49L25)},
  MRNUMBER = {3670619},
       URL = {https://doi.org/10.1016/j.matpur.2016.10.013},
}

@article{IMTb17,
AUTHOR = {Ishii, Hitoshi and Mitake, Hiroyoshi and Tran, Hung V.},
     TITLE = {The vanishing discount problem and viscosity {M}ather
              measures. {P}art 2: {B}oundary value problems},
   JOURNAL = {J. Math. Pures Appl. (9)},
  FJOURNAL = {Journal de Math\'ematiques Pures et Appliqu\'ees. Neuvi\`eme S\'erie},
    VOLUME = {108},
      YEAR = {2017},
    NUMBER = {3},
     PAGES = {261--305},
      ISSN = {0021-7824},
   MRCLASS = {35J60 (35B40 35J70 49L25)},
  MRNUMBER = {3682741},
       URL = {https://doi.org/10.1016/j.matpur.2016.11.002},
}

@article {IsSic20,
    AUTHOR = {Ishii, Hitoshi and Siconolfi, Antonio},
     TITLE = {The vanishing discount problem for {H}amilton-{J}acobi
              equations in the {E}uclidean space},
   JOURNAL = {Comm. Partial Differential Equations},
  FJOURNAL = {Communications in Partial Differential Equations},
    VOLUME = {45},
      YEAR = {2020},
    NUMBER = {6},
     PAGES = {525--560},
      ISSN = {0360-5302},
   MRCLASS = {35F21 (35B40 37J99 49L25)},
  MRNUMBER = {4106998},
       DOI = {10.1080/03605302.2019.1710845},
       URL = {https://doi.org/10.1080/03605302.2019.1710845},
}

@article {IS11,
    AUTHOR = {Iturriaga, Renato and S{\'a}nchez-Morgado, H{\'e}ctor},
     TITLE = {Limit of the infinite horizon discounted {H}amilton-{J}acobi
              equation},
   JOURNAL = {Discrete Contin. Dyn. Syst. Ser. B},
  FJOURNAL = {Discrete and Continuous Dynamical Systems. Series B. A Journal
              Bridging Mathematics and Sciences},
    VOLUME = {15},
      YEAR = {2011},
    NUMBER = {3},
     PAGES = {623--635},
      ISSN = {1531-3492},
   MRCLASS = {49L25 (35F21 37J50)},
  MRNUMBER = {2774130 (2012c:49070)},
MRREVIEWER = {Diogo Lu{\'{\i}}s Aguiar Gomes},
       DOI = {10.3934/dcdsb.2011.15.623},
       URL = {http://dx.doi.org/10.3934/dcdsb.2011.15.623},
}

@unpublished{LPV,
Author = {Lions, P.-L. and Papanicolaou, G. and Varadhan, S.R.S.},
title = {Homogenization of {H}amilton-{J}acobi equation},
year = {1987 },
note={unpublished preprint}
}

@article {MiTra17,
    AUTHOR = {Mitake, Hiroyoshi and Tran, Hung V.},
     TITLE = {Selection problems for a discount degenerate viscous
              {H}amilton-{J}acobi equation},
   JOURNAL = {Adv. Math.},
  FJOURNAL = {Advances in Mathematics},
    VOLUME = {306},
      YEAR = {2017},
     PAGES = {684--703},
      ISSN = {0001-8708},
   MRCLASS = {35J62 (35B25 35J70 37J50 49L25)},
  MRNUMBER = {3581314},
MRREVIEWER = {Georgios Psaradakis},
       DOI = {10.1016/j.aim.2016.10.032},
       URL = {https://doi.org/10.1016/j.aim.2016.10.032},
}

@article {Z20,
    AUTHOR = {Zavidovique, Maxime},
     TITLE = {Convergence of solutions for some degenerate discounted
              {H}amilton-{J}acobi equations},
   JOURNAL = {Anal. PDE},
  FJOURNAL = {Analysis \& PDE},
    VOLUME = {15},
      YEAR = {2022},
    NUMBER = {5},
     PAGES = {1287--1311},
      ISSN = {2157-5045},
   MRCLASS = {35D40 (35F21 49L25)},
  MRNUMBER = {4492873},
       DOI = {10.2140/apde.2022.15.1287},
       URL = {https://doi.org/10.2140/apde.2022.15.1287},
}

@article{CFZZ24,
    AUTHOR = {Chen, Qinbo and Fathi, Albert and Zavidovique, Maxime and
              Zhang, Jianlu},
     TITLE = {Convergence of the solutions of the nonlinear discounted
              {H}amilton--{J}acobi equation: The central role of
              {M}ather measures},
   JOURNAL = {J. Math. Pures Appl. (9)},
    VOLUME = {181},
      YEAR = {2024},
     PAGES = {22--57},
       DOI = {10.1016/j.matpur.2023.10.006},
       URL = {https://doi.org/10.1016/j.matpur.2023.10.006},
}

@article{TZ26,
    AUTHOR = {Tu, Son N. T. and Zhang, Jianlu},
     TITLE = {Vanishing discount limits for fully nonlinear
              {H}amilton--{J}acobi equations on noncompact domains},
   JOURNAL = {Adv. Contin. Discrete Models},
    VOLUME = {2026},
      YEAR = {2026},
     PAGES = {Paper No.~58},
       DOI = {10.1186/s13662-026-04097-w},
       URL = {https://doi.org/10.1186/s13662-026-04097-w},
}

@article{GMT18,
    AUTHOR = {Gomes, Diogo A. and Mitake, Hiroyoshi and Tran, Hung V.},
     TITLE = {The selection problem for discounted {H}amilton--{J}acobi
              equations: some non-convex cases},
   JOURNAL = {J. Math. Soc. Japan},
    VOLUME = {70},
      YEAR = {2018},
    NUMBER = {1},
     PAGES = {345--364},
       DOI = {10.2969/jmsj/07017534},
       URL = {https://doi.org/10.2969/jmsj/07017534},
}

@unpublished{DNYZ24,
    AUTHOR = {Davini, Andrea and Ni, Panrui and Yan, Jun and
              Zavidovique, Maxime},
     TITLE = {Convergence/divergence phenomena in the vanishing discount
              limit of {H}amilton--{J}acobi equations},
      YEAR = {2024},
      NOTE = {Preprint, arXiv:2411.13780},
    EPRINT = {2411.13780},
ARCHIVEPREFIX = {arXiv},
PRIMARYCLASS = {math.AP},
       URL = {https://arxiv.org/abs/2411.13780},
}

@article{DI27,
    AUTHOR = {Davini, Andrea and Ishii, Hitoshi},
     TITLE = {The vanishing discount problem for nonlocal
              {H}amilton--{J}acobi equations},
   JOURNAL = {Nonlinear Anal.},
    VOLUME = {274},
      YEAR = {2027},
     PAGES = {Paper No.~114233},
       DOI = {10.1016/j.na.2026.114233},
       URL = {https://doi.org/10.1016/j.na.2026.114233},
}
\end{document}